\def\XXint#1#2#3{{\setbox0=\hbox{$#1{#2#3}{\int}$ }
		\vcenter{\hbox{$#2#3$ }}\kern-.6\wd0}}
\def\ulim{\mathop{\hbox{$\omega$-lim}}}
\newcommand{\transint}{\cap\kern-0.63em|\kern0.7em}
\DeclareMathSymbol{\intprod}{\mathbin}{MnSyC}{'270}
\newcommand{\p}{{ \partial}}
\newcommand{\Z}{{\mathbb Z}}
\newcommand{\N}{{\mathbb N}}
\renewcommand{\P}{{\mathbb P}}
\newcommand{\R}{{\mathbb R}}
\newcommand{\A}{{\mathbb A}}
\newcommand{\G}{{\mathrm G}}
\newcommand{\supp}{{\mathrm{supp} \, }}
\newcommand{\T}{{\mathcal T}}
\renewcommand{\p}{{\partial}}
\newcommand{\eps}{{\varepsilon}}
\newtheorem{thm}{Theorem}[section]
\newtheorem*{thm*}{Theorem}
\newtheorem{lemma}[thm]{Lemma}
\newtheorem*{lemma*}{Lemma}
\newtheorem{prop}[thm]{Proposition}
\newtheorem{cor}[thm]{Corollary}
\newtheorem*{cor*}{Corollary}
\newtheorem*{conj*}{Conjecture}
\newtheorem{fact}[thm]{Fact}
\newenvironment{claim}{\par\medskip\noindent\textit{Claim.}\space}{\par\medskip}
\newenvironment{claimproof}{\par\noindent\textit{Proof of claim.}\space}{\hfill$\diamond$\medskip\par}
   \newtheoremstyle{others}
     {7pt}
     {6pt}
     {}
     {}
     {\bf}
     {.}
     {.5em}
     {}
\theoremstyle{others}
\newtheorem{rmk*}[thm]{Remark}
\newtheorem{defn}[thm]{Definition}
\newtheorem*{question*}{Question}
\numberwithin{equation}{section}
\begin{document}

\title{Sublinearly Morseness in Higher Rank Symmetric Spaces }
\author{Rou Wen}
\address{Department of Mathematics, University of Wisconsin, Madison}
\email{rwen5@wisc.edu}

\begin{abstract}
    The goal of this paper is to develop a theory of ``sublinearly Morse boundary" and prove a corresponding sublinearly Morse lemma in higher rank symmetric space of non-compact type. This is motivated by the work of Kapovich--Leeb--Porti and the theory of sublinearly Morse quasi-geodesics developed in the context of CAT(0) geometry.
\end{abstract}
\maketitle

\section{Introduction}
The Morse lemma, also known as the fellow traveling property of quasi-geodesics, in hyperbolic space is a fundamental tool for studying such spaces. However, it fails in higher rank symmetric spaces due to the existence of isometrically embedded copies of Euclidean spaces. 

In order to deal with this, Kapovich--Leeb--Porti defined \emph{uniformly regular sequences}. Let $X$ be a higher rank symmetric space of non-compact type, $d_X$ a Riemannian symmetric distance on $X$, and $\theta$ be a subset of the simple roots that correspond to a root system of the Lie algebra associated to $X$. Let $[xy]$ denote the geodesic segment in $X$ that joins $x$ and $y$, $[xy]$ is called \emph{$\theta$-uniform regular} if the Cartan projection of $[xy]$ stays uniformly away from the Weyl chamber walls defined by elements in $\theta$. 
\begin{defn}\label{KLP UR qg} 
    A discrete path $q:\Z \rightarrow X$ is \emph{$\theta$-uniformly regular} if there exists a constant $D$ such that for any $m<n\in \Z$ with $(n-m)\geq D$, the geodesic segment $[q(m)q(n)]$ is $\theta$-uniformly regular.  In addition, if $q$ is a quasi-isometry, such sequences are called \emph{undistorted $\theta$-uniformly regular}. 
\end{defn}

Among many other things, the undistorted $\theta$-uniform regular sequences provide an equivalent characterization of Anosov subgroups: a discrete subgroup $\Gamma \subset G$, is $P_{\theta}$-Anosov if and only if an(y) orbit of every geodesic in the Cayley graph of $\Gamma$ with respect to a word metric is a undistorted $\theta-$uniformly regular sequence (Theorem 5.5; \cite{2017arXiv170301647K}).

In this paper, we consider relative $P_{\theta}$-Anosov groups and $P_{\theta}$-transverse groups, which generalize $P_{\theta}$-Anosov groups. Undistorted uniformly regular sequences might seem like a good candidate to capture the hyperbolic-like behaviors in these subgroups. However, we will show that once we leave the Anosov setting, such sequences are rarely found. To make``rarely found" precise, we use Patterson--Sullivan measures on the flag manifold $\mathcal{F}_{\theta}$ associated to $P_{\theta}$. 

\begin{thm*}[Theorem \ref{zero measure in non Anosov}]
    For a $P_{\theta}$-transverse subgroup $\Gamma$ satisfying certain conditions, there exists a Patterson--Sullivan measure $\mu$ associated to $\Gamma$. If $\Gamma$ is not $P_{\theta}$-Anosov, then the set of endpoints of the undistorted $\theta$-uniform regular sequences in $\Gamma$ has measure zero with respect to $\mu$.
\end{thm*}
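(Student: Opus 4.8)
The plan is to exploit ergodicity of the Patterson--Sullivan measure $\mu$ on the limit set $\Lambda_\theta\subseteq\mathcal{F}_\theta$: I will show that the set $E$ of endpoints of undistorted $\theta$-uniformly regular sequences is, after a natural exhaustion, a countable union of \emph{closed} $\Gamma$-invariant pieces, so that if $E$ were not $\mu$-null then, by ergodicity and full support of $\mu$, one such piece would be all of $\Lambda_\theta$ --- forcing $\Gamma$ to be $P_\theta$-Anosov, contrary to hypothesis. First I record that $E$ is $\Gamma$-invariant: the $G$-action on $X$ preserves the $\mathfrak{a}^+$-valued distance (hence the notion of a $\theta$-uniformly regular segment), preserves all Riemannian distances (hence quasi-isometry constants), and is equivariant for flag convergence, so left-translating an undistorted $\theta$-uniformly regular sequence by $\gamma\in\Gamma$ yields another one, with identical constants, whose endpoint is translated by $\gamma$.

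Next I would exhaust $E=\bigcup_{k\ge1}E_k$, where $E_k$ is the set of endpoints of sequences that are $(k,k)$-quasi-isometric embeddings with $\theta$-uniform-regularity parameter at most $k$; each $E_k$ is again $\Gamma$-invariant. The geometric heart is to bound $E_k$ by a closed $\Gamma$-invariant set. Applying the Morse lemma for uniformly regular undistorted quasi-geodesics of Kapovich--Leeb--Porti, a sequence witnessing $\xi\in E_k$ lies within Hausdorff distance $R(k)$ of a $\theta$-$\varepsilon(k)$-uniformly regular geodesic line asymptotic to $\xi$ (and to the sequence's backward endpoint, which is antipodal to $\xi$) which stays within bounded distance of $\Gamma\cdot o$ with $\Gamma$-orbit points occurring along it at density $D(k)$; I would package this as $\xi$ lying in a set $U_k$ of ``uniformly $k$-conical'' limit points. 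Then $E_k\subseteq U_k$, the set $U_k$ is $\Gamma$-invariant (translate the ray; $\Gamma\cdot o$ is $\Gamma$-stable), and --- this is the point of insisting on \emph{uniform} constants --- $U_k$ is closed: given $\xi_i\to\xi$ with witnesses, I would normalise basepoints using properness of $\Gamma\curvearrowright X$, extract a subsequential limit ray by Arzel\`a--Ascoli, and check that its defining properties survive. The essential mechanism is that, by uniform Morseness, the flag endpoint of such a ray is a \emph{uniform} limit (over $i$) of a function of a bounded amount of initial data, so that the two limiting processes interchange; tracking basepoints correctly is the delicate point, and one may be forced to argue in the space of geodesics (or of transverse pairs) rather than at the level of endpoints, where the corresponding set of uniformly regular Morse geodesics is more robustly closed.

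Finally, under the stated conditions $\mu$ has full support on $\Lambda_\theta$ and the $\Gamma$-action on $(\Lambda_\theta,\mu)$ is ergodic (the latter by the standard Hopf-type argument from the shadow lemma). Hence each $\Gamma$-invariant Borel set $U_k$ has $\mu$-measure $0$ or $\mu(\Lambda_\theta)$. If some $U_k$ had full measure, then $U_k$ would be dense and, being closed, equal to $\Lambda_\theta$; every limit point of $\Gamma$ would then be $\theta$-conical, and by the characterisation of $P_\theta$-Anosov subgroups among $P_\theta$-transverse subgroups (conicality of the entire limit set) $\Gamma$ would be $P_\theta$-Anosov, contradicting the hypothesis. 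Therefore $\mu(U_k)=0$ for every $k$, and $\mu^*(E)\le\mu\bigl(\bigcup_k U_k\bigr)=0$. I expect the main difficulty to be the closedness argument of the second paragraph --- converting the soft property ``endpoint of an undistorted uniformly regular sequence'' into a genuinely closed condition --- which requires running the higher-rank Morse lemma with care (there is no unique geodesic asymptotic to a flag point, so one must track parallel sets and basepoints) and tuning the uniform-conicality data so that the resulting set is simultaneously closed, $\Gamma$-invariant, and large enough to contain $E_k$; a secondary point is to make sure the hypotheses in the theorem indeed deliver the ergodicity used above.
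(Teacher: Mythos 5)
Your proposal takes a genuinely different route from the paper. The paper argues quantitatively: from $\mu(\partial_{M,\theta}(\Gamma))>0$ it first deduces $\phi$-divergence via the Hopf--Tsuji--Sullivan dichotomy (Theorem \ref{ergodic in divergent case}), hence ergodicity of the Hilbert-geometry geodesic flow on $\Gamma_0\backslash S_{\Gamma_0}\Omega$; it then uses density of a generic forward orbit to approximate the axis of $\gamma f_\gamma$ (a loxodromic obtained from any $\gamma$ via Fact \ref{loxodromic est.}) by translates of a single undistorted uniformly regular sequence, and extracts the linear lower bound $\alpha(\kappa(\gamma))\ge a|\gamma|-b$ directly, then concludes by the Bochi--Potrie--Sambarino criterion (Fact \ref{equiv char of Anosov}). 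Your argument is softer: exhaust by uniformity constants, enclose $E_k$ in closed $\Gamma$-invariant sets $U_k$, and run the ergodicity dichotomy at the level of the boundary action. Both approaches buy something: the paper's is self-contained given the dichotomy and produces the explicit linear estimate; yours, if carried through, would show more conceptually that uniformly conical directions are an ``all or nothing'' phenomenon.

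That said, there are concrete gaps. First, you treat ergodicity of $\Gamma\curvearrowright(\Lambda_\theta,\mu)$ as a standing hypothesis (``under the stated conditions $\mu$ has full support \dots and the $\Gamma$-action \dots is ergodic''), but Theorem \ref{zero measure in non Anosov} does \emph{not} assume $\phi$-divergence. You must first argue, exactly as the paper does, that $E\subset\Lambda_\theta^{con}(\Gamma)$, so $\mu(E)>0$ forces $\mu(\Lambda_\theta^{con}(\Gamma))>0$, hence the divergent branch of the dichotomy and in particular the ergodicity (and non-atomicity) you want to invoke; in the convergent branch $\mu(\Lambda_\theta^{con}(\Gamma))=0$ and there is nothing to prove. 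Second, full support of $\mu$ on $\Lambda_\theta(\Gamma)$ is not automatic from the definition of a Patterson--Sullivan measure and needs a word of justification before you can upgrade ``$\mu(U_k)=\mu(\Lambda_\theta)$ and $U_k$ closed'' to ``$U_k=\Lambda_\theta$.''

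Third, and most seriously, the closedness of $U_k$ is the step on which your argument genuinely hinges and it is not established. The obstacle you yourself identify is real: after translating witnesses back to a bounded neighbourhood of a fixed basepoint, the Arzel\`a--Ascoli limit of the witnessing rays has flag endpoint $\lim_i\gamma_i\xi_i$, not $\xi=\lim_i\xi_i$, unless the normalising elements $\gamma_i$ can be taken in a fixed finite set; and whether they can is exactly what one is trying to prove. Controlling this requires a careful reformulation of ``uniformly $k$-conical'' in terms of a fixed basepoint (e.g.\ via shadow conditions at scale depending only on $k$, as in the Sullivan/shadow-lemma formalism used elsewhere in the paper), so that $U_k$ is manifestly closed \emph{and} still contains $E_k$. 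In the form stated, the argument does not close. Finally, the last step invokes ``conicality of the entire limit set implies $P_\theta$-Anosov for $P_\theta$-transverse groups''; this is the RCA-type characterization of Kapovich--Leeb--Porti and is true, but you should cite it explicitly and check that the notion of conicality you produced in $U_k$ matches the one in that theorem (the paper records the equivalence of convergence-group conicality and flag-conical convergence for transverse groups via \cite[Theorem 3.18]{2017arXiv170302160K}).
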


A quick way to see the theorem hold for relative $P_{\theta}$-Anosov subgroups is that because a generic geodesic in $X$ can spend arbitrarily large amount of time in horoballs centered at bounded parabolic limit points, the undistorted $\theta$-uniform regular sequences are rare in such subgroups.

The lack of Morse directions also occurs in general CAT(0) metric spaces. In order to capture hyperbolic-like behaviors in this setting, Qing--Rafi--Tiozzo~\cite{2020arXiv201103481Q} defined sublinearly Morse geodesics, and constructed a new class of boundaries for CAT(0) spaces, and later for general proper metric spaces \cite{2019arXiv190902096Q}, called \emph{sublinearly Morse boundary}, consisting of endpoints of the sublinearly Morse geodesics. Gekhtman--Qing--Rafi \cite{2022arXiv220804778G} proved this boundary has full measure in several sensible measures on the visual boundary (the Patterson--Sullivan measure, stationary measure of random walk etc.), when the CAT(0) space has a rank one axis (i.e. a geodesic fixed by an element in the isometry group that does not bound any flat).

Inspired by the Qing--Rafi--Tiozzo construction, we generalize the Kapovich--Leeb--Porti notion of uniformly regular sequences to what we call \emph{sublinearly Morse sequences}. 

Throughout the paper, we will work under the case that $G$ is a connected semi-simple real Lie group of non-compact type. Let $\theta $ be a subset of simple roots, and $P_{\theta}$ the parabolic subgroup associated with this choice of $\theta$ (the precise definition can be found in Section \ref{flag mfld}, Definition \ref{para. subgp}). Let $X := G/K$, where $K$ is a maximal compact subgroup of $G$, be the symmetric space associated to $G$, $\kappa$ be a Cartan projection, and $d_X$ a Riemannian symmetric distance on X.
\begin{defn}[$P_{\theta}$-Sublinearly Morse sequence]\label{sublin. Morse}
     A sequence $\{g_n\}_n \subset G$ is called 
    \emph{$P_{\theta}$-sublinearly Morse} if:

    \begin{enumerate}
        \item there exists some sublinear function $\eta$ such that $$ d_X(g_nK, g_{n+1}K) \leq \eta(d_X(g_0K, g_nK));$$ 

        \item there exists a constant $C \geq 1$ and a sublinear function $\bar{\eta} = O(\eta) $, such that the path obtained by concatenating the geodesic segments $\left(\bigcup_i [g_iK,g_{i+1}K]\right)$ is a $(C, \bar{\eta})$ sublinear ray based at $g_0K$ (see Definition \ref{sublin ray});

        \item there exist $ a> 0$ and a sublinear function $\eta' = O(\eta)$, such that for all $n $ and $\alpha \in \theta$,
        $$\alpha(\kappa(g_n^{-1}g_{n+i})) \geq a \cdot d_X(g_nK,g_{n+i}K) -\eta'(d_X(g_0K, g_{n+i}K)).$$

    \end{enumerate}

\end{defn}

We will explain in Section \ref{Lie gp} in more details of the meaning of symbols appearing in this definition, but for readers familiar with the structure of the Lie algebra associated to such $G$, condition (3) is equivalent to requiring the Cartan projection of the sequence to stay away from certain walls of the Weyl chamber of the Cartan sub-algebra associated to elements in $\theta$. 

Moreover, in Section \ref{geometric interpretation} we prove a geometric interpretation of the sublinearity of a $P_{\theta}$-sublinearly Morse sequence, of which we called the sublinearly Morse lemma in higher rank symmetric spaces. Let $V$ be the Weyl cone defined by $g_0K$ and the ``end point" of the sequence (we will make this precise in Section \ref{sublin Morse bdy} and \ref{geometric interpretation}).

\begin{thm*}[Corollary \ref{sublin. Morse lemma for seq}]
    If $\{g_n\}\subset G$ is $P_{\theta}$-sublinearly Morse, then $\{g_n\}$ stays in a sublinear neighborhood of $V$. 
\end{thm*}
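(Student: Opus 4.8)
The plan is to derive the corollary from the \emph{Morse lemma for sublinearly regular sublinear rays}, i.e.\ the geometric statement proved in Section~\ref{geometric interpretation}: a sublinear ray based at $o:=g_0K$, all of whose macroscopic subsegments are $\theta$-regular up to a sublinear error, stays within a sublinear neighborhood of the Weyl cone $V=V(o,\xi)$, where $\xi\in\mathcal{F}_\theta$ is its limit flag. So the task reduces to producing from $\{g_n\}$ a path with these properties. Conditions (1) and (2) do exactly this for the concatenation $\gamma:=\bigcup_i[g_iK,g_{i+1}K]$: condition (1) makes the jumps between consecutive geodesic pieces sublinearly small relative to $d_X(o,\cdot)$, and condition (2) records that $\gamma$ is a $(C,\bar{\eta})$-sublinear ray. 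Condition (3) is precisely the statement that every chord $[g_nK,g_{n+i}K]$ of $\gamma$ satisfies $\alpha(\kappa(g_n^{-1}g_{n+i}))\geq a\,d_X(g_nK,g_{n+i}K)-\eta'(d_X(o,g_{n+i}K))$ for all $\alpha\in\theta$, i.e.\ $\gamma$ is $\theta$-regular up to sublinear error at all macroscopic scales. The one point needing care is to pass from these combinatorial vertices to arbitrary points $\gamma(s),\gamma(t)$: since every point of $\gamma$ lies within $\eta(\cdot)$ of some $g_nK$ by condition (1), and each root functional $\alpha\circ\kappa$ is coarsely $1$-Lipschitz, the regularity estimate survives with an enlarged sublinear error, still $O(\eta)$.

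Next I would extract the endpoint. Because $\gamma$ is sublinearly $\theta$-regular and $d_X(o,\gamma(t))\to\infty$ (if the sequence is bounded the statement is vacuous), its accumulation set in $\mathcal{F}_\theta$ consists only of $\theta$-regular flags; the sublinear-ray property prevents backtracking beyond a sublinear amount, which forces a single accumulation flag $\xi\in\mathcal{F}_\theta$. Concretely, for $m<n$ with $n-m$ large the flag of $[g_mK,g_nK]$ at $g_mK$ is, by the higher-rank convergence estimates for regular segments, nearly independent of $n$ once $d_X(o,g_mK)$ is large, because the angular defect of the triangle with vertices $o,g_mK,g_nK$ is bounded by the sublinear errors appearing in (1)--(3). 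This $\xi$ is the endpoint of $\{g_n\}$, and $V:=V(o,\xi)$ is the Weyl cone in the statement.

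The geometric core is then the estimate that a $\theta$-regular chord issuing from $o$ in a direction almost pointing at $\xi$ is uniformly close to $V$. Fix $n$, put $\ell_n:=d_X(o,g_nK)$, and let $\xi_n\in\mathcal{F}_\theta$ be the flag determined by the (sublinearly $\theta$-regular) segment $[o,g_nK]$. Using the diamond/Weyl-cone estimates of Kapovich--Leeb--Porti one gets $d_X(g_nK,V)\lesssim \angle_o(\xi_n,\xi)\cdot\ell_n+(\text{sublinear in }\ell_n)$. It remains to bound the angular deviation: the tail of $\gamma$ beyond $g_nK$ is again a sublinearly $\theta$-regular sublinear ray, so it determines $\xi$, while its initial direction at $g_nK$ coincides with $\xi_n$ up to an error controlled by how far this tail is from the geodesic ray $[g_nK,\xi)$, which the sublinear-ray property together with condition (1) bound by $O(\eta(\ell_n)/\ell_n)$. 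Hence $d_X(g_nK,V)\lesssim\eta(\ell_n)$, and for the non-vertex points of $\gamma$ one absorbs an extra $\eta(\ell_n)$ from condition (1); both terms are sublinear in $d_X(o,\cdot)$, which is the assertion.

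The main obstacle is making the propagation of regularity and the angular convergence quantitative. In higher rank a concatenation of $\theta$-regular segments need not be $\theta$-regular, so one must lean on the stability statements of Kapovich--Leeb--Porti --- a sufficiently straight concatenation of $\theta$-regular segments lies close to a single $\theta$-regular segment, with the regularity defect controlled by the straightness defect --- and, above all, organize the chaining over the roughly $\ell_n$ vertices of $[o,g_nK]$ so that the sublinear errors are \emph{not} summed naively (which would only yield a linear bound) but absorbed scale by scale along a dyadic decomposition, so the accumulated error is at worst $O(\eta(\ell_n)\log\ell_n)$ and then, after replacing $\eta$ by a slightly larger sublinear function, $O(\eta(\ell_n))$. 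Verifying this bookkeeping --- and that it uses only hypotheses (1)--(3) with $\bar{\eta},\eta'=O(\eta)$ --- is where the real work lies; the rest is a definition-chase combined with black-box applications of the higher-rank Morse-lemma inputs.
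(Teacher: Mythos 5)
The first part of your plan — concatenate the $[g_iK,g_{i+1}K]$ into a path $\gamma$, observe that conditions (1)--(2) make $\gamma$ a sublinear ray and that condition (3) gives sublinear $\theta$-regularity on its chords, and then reduce to a Morse lemma for such paths — is exactly the paper's route (Lemma \ref{sublin Morse path} followed by Theorem \ref{sublin. Morse lemma}). Where you diverge is in \emph{how} that Morse lemma for paths is proved. You propose a direct quantitative argument: estimate $d_X(g_nK,V)\lesssim\angle_o(\xi_n,\xi)\cdot\ell_n + (\text{sublinear})$, then bound $\angle_o(\xi_n,\xi)$ by the distance from the tail of $\gamma$ to the ray $[g_nK,\xi)$, and control error accumulation by a dyadic chaining that you hope yields $O(\eta(\ell_n)\log\ell_n)$. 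The paper does something quite different (Lemma \ref{stronger condition}, via Proposition \ref{unif reg of c}): it argues by contradiction and passes to an asymptotic cone with rescaling $\lambda_n=D_n^{-1}$, where $D_n$ is the hypothetical maximal deviation from the diamond. Under the rescaling, the sublinear error terms $\chi_1,\chi_2$ vanish in the ultralimit, so $c_\omega$ becomes a genuinely $q$-regular bilipschitz ray in the Euclidean building $X_\omega$, and the Kapovich--Leeb--Porti Lemma 5.22 forces $c_\omega\subset\Diamond_\omega$, contradicting $c_\omega\not\subset\Diamond_\omega$. This trades all the quantitative bookkeeping you worry about for the (qualitative, but sharp) ultralimit dichotomy.

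There is also a gap in the step you flag as ``where the real work lies,'' and I think it is not merely bookkeeping but a circularity. You reduce the Morse estimate at $g_nK$ to bounding $\angle_o(\xi_n,\xi)$, and then bound that angle ``by how far this tail is from the geodesic ray $[g_nK,\xi)$.'' But the distance of $\gamma|_{[t_n,\infty)}$ from $[g_nK,\xi)$ (equivalently, from the Weyl cone $V(g_nK,\xi)$) is precisely an instance of the conclusion you are trying to prove, now with basepoint $g_nK$ instead of $o$. Conditions (1)--(3) do not bound this a priori: condition (1) only controls consecutive jumps, condition (2) controls the path's parametrization, and condition (3) controls Cartan projections, not distances to a fixed Weyl cone. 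Without an independent source for the angular bound, the dyadic chaining has nothing to sum. The paper's asymptotic-cone argument is designed to avoid exactly this: instead of propagating regularity step by step, it shows that any putative \emph{linear-scale} deviation survives the rescaling, while the sublinear defects die, contradicting the exact building-theoretic Morse lemma in the cone. If you want a proof along your lines, you would need a genuinely new quantitative ingredient (a stability estimate for concatenations of nearly-regular segments with explicit error propagation), which is not supplied by the KLP black boxes you cite.
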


It is worth noting that there is a sublinearly Morse lemma \cite[Lemma 3.4]{2018arXiv180105163P} proven by Pallier in hyperbolic spaces. One can see that our Theorem \ref{sublin. Morse lemma} is a generalization of this result because condition (3) in Definition \ref{sublin. Morse} becomes vacuous in rank one setting, our sublinearly Morse sequences then coincide with the sublinear rays that Pallier worked with.

The definition of sublinearly Morse sequences gives rise to a well-defined boundary in the partial flag manifold $\mathcal{F}_{\theta}$. Our main theorem is that for a Patterson--Sullivan measure defined by a $P_{\theta}$-transverse subgroup $\Gamma$ of $G$ on the limit set $\Lambda_{\theta}(\Gamma)\subset \mathcal{F}_{\theta}$, the subset consisting $P_{\theta}$-sublinearly Morse points in the boundary has full measure. The exact statement of the main theorem requires specifying some technical conditions. However, there is one corollary (see Theorem \ref{genericity in rel Anosov}) for readers familiar with \emph{relative $P_{\theta}$-Anosov} subgroups.

\begin{cor*}[Theorem \ref{genericity in rel Anosov}]
    Let $(\Gamma, \mathcal{P})$ be a relatively $P_{\theta}$-Anosov pair in $G$. 
    Let $\mu$ be a Patterson--Sullivan measure of certain dimension supported on $\Lambda_{\theta}(\Gamma) \subset \mathcal{F}_{\theta}$. Then the $P_{\theta}$-sublinearly Morse boundary associated to $\Gamma$ has full measure in the limit set with respect to $\mu$.
\end{cor*}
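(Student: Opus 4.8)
The plan is to deduce this statement from our main genericity theorem for $P_{\theta}$-transverse subgroups by checking its hypotheses in the relative Anosov case. First I would recall that a relatively $P_{\theta}$-Anosov pair $(\Gamma,\mathcal{P})$ is in particular $P_{\theta}$-transverse, and that the associated boundary embedding identifies the Bowditch boundary $\partial(\Gamma,\mathcal{P})$ with the limit set $\Lambda_{\theta}(\Gamma)$, carrying conical limit points to $\theta$-conical points and the (countable) $\Gamma$-orbit of the peripheral fixed flags to the bounded parabolic points. I would then check that the Patterson--Sullivan measure $\mu$ of the prescribed dimension is non-atomic on $\Lambda_{\theta}(\Gamma)$: for a non-elementary $\Gamma$ this follows from the shadow lemma, the dimension being chosen so that the relevant Poincaré series of each peripheral subgroup behaves as required, so the bounded parabolic points form a $\mu$-null set. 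Together with the relative-Anosov dichotomy ``$\theta$-conical or bounded parabolic'' this shows that $\mu$-almost every $\xi \in \Lambda_{\theta}(\Gamma)$ is $\theta$-conical, and it remains to produce, for such $\xi$, a $P_{\theta}$-sublinearly Morse sequence in $\Gamma$ limiting to $\xi$.

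Given a $\theta$-conical $\xi$, the candidate sequence is the orbit $g_n := \gamma(n)$ of a word geodesic ray $\gamma \colon \N \to \Gamma$ in the Cayley graph of $\Gamma$ converging to the point of $\partial(\Gamma,\mathcal{P})$ that corresponds to $\xi$, with basepoint $o := eK \in X$. Condition (1) of Definition~\ref{sublin. Morse} is immediate, since $\gamma$ moves at unit speed in the Cayley graph, so $d_X(g_nK, g_{n+1}K)$ is uniformly bounded. For conditions (2) and (3) I would use the standard structure of geodesics in relatively hyperbolic groups: $\gamma$ decomposes into maximal peripheral excursions, each penetrating a coset $g_{i}P_{j_i}$, separated by non-peripheral segments. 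Along a non-peripheral segment the conical dynamics of the boundary map forces the corresponding piece of $\{g_nK\}$ to be uniformly regular and to track a geodesic of $X$ at uniformly bounded distance. Along a peripheral excursion of length $\ell$ beginning at distance $t$ from $o$, the orbit stays inside a region of diameter $O(\ell)$ whose coarse geometry is governed by the action of $P_{j_i}$ on $X$ fixing a bounded parabolic flag; hence both the failure of the quasi-geodesic inequality and the total drift of the Cartan projection toward the walls $\{\alpha = 0\}_{\alpha \in \theta}$ accumulated across the excursion are $O(\ell)$.

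Everything therefore reduces to the key measure-theoretic input: for $\mu$-a.e.\ $\xi$ the peripheral excursions of $\gamma$ grow sublinearly, i.e.\ the $k$-th excursion, occurring near distance $t_k \to \infty$ from $o$, has length $\ell_k = o(t_k)$. I would prove this by a Sullivan-type shadow lemma for Patterson--Sullivan measures of $P_{\theta}$-transverse subgroups together with a Borel--Cantelli argument: the $\mu$-measure of the set of $\xi$ whose geodesic has, in a dyadic window of distances $[2^m, 2^{m+1})$, an excursion deeper than $\varepsilon 2^m$ is bounded by a summable sequence in $m$, because the depth of such an excursion produces an exponential penalty in the measure of the corresponding shadow that outweighs the polynomially many windows --- this is exactly where the ``certain dimension'' enters, so that the shadow lemma applies with a sharp exponential rate. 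Hence $\mu$-a.e.\ $\xi$ has only finitely many exceptional excursions and $\ell_k = o(t_k)$. Feeding this back into the previous paragraph, the concatenated path $\bigcup_i [g_iK, g_{i+1}K]$ is a $(C, \bar{\eta})$ sublinear ray and $\alpha(\kappa(g_n^{-1}g_{n+i})) \geq a\, d_X(g_nK, g_{n+i}K) - \eta'(d_X(g_0K,g_{n+i}K))$ holds with a sublinear $\eta'$, so $\{g_n\}$ is $P_{\theta}$-sublinearly Morse and $\xi$ lies in the $P_{\theta}$-sublinearly Morse boundary.

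The main obstacle is precisely this excursion estimate: establishing a shadow lemma in sufficient generality for transverse subgroups in higher rank, and translating ``combinatorial penetration depth of an excursion in the Cayley graph'' into honest upper bounds both for the $\mu$-measure of a shadow and for the regularity defect of the orbit inside the excursion. A secondary technical point is checking non-atomicity of $\mu$ and the conical/bounded-parabolic dichotomy at the level of $\mathcal{F}_{\theta}$ rather than a visual boundary; once these are in place, the remainder is bookkeeping with sublinear functions together with the uniform-regularity estimates for conical directions already available in the transverse setting.
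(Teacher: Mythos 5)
Your plan announces that you will deduce the corollary by ``checking the hypotheses'' of the main genericity theorem, but the argument you then sketch never actually invokes the Main Theorem: instead you try to reconstruct the conclusion from scratch by taking, for $\mu$-a.e.\ conical $\xi$, the orbit of a word geodesic in the Cayley graph, and by controlling peripheral excursions via a shadow lemma plus Borel--Cantelli. This is a genuinely different route from the paper's. The paper keeps the promise you made and only the promise you made: it shows (i) relative $P_{\theta}$-Anosov $\Rightarrow$ $P_{\theta}$-transverse; (ii) $\Gamma$ is $\phi$-divergent, by citation; and (iii) the Bowen--Margulis--Sullivan measure $\bar m$ on $\Gamma_0\backslash S_{\Gamma_0}\Omega$ is \emph{finite}. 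Step (iii) is the content of Proposition~\ref{finiteness of m}: one decomposes $S_{\Gamma_0}\Omega$ into a compact part plus disjoint horoballs over the bounded parabolic points (Lemma~\ref{decomposition}), bounds the cusp contribution using the shadow lemma and the entropy gap $\delta_{\phi}(P)<\delta_{\phi}(\Gamma)$ from \cite{2023arXiv230804023C}, and concludes. Once $\bar m$ is finite, the Main Theorem (Birkhoff $\Rightarrow$ positive density in the compact part $\Rightarrow$ Theorem~\ref{equiv. condition}) does the rest, and the sequence produced is the orbit sampled along a \emph{straight} geodesic ray in $\Omega$, not a word geodesic.

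The concrete gap in your version is exactly the step you flag as ``the main obstacle.'' For the Borel--Cantelli estimate to close, you need summability in $m$ of the $\mu$-measure of the set of $\xi$ whose geodesic, in the window $[2^m,2^{m+1})$, makes an excursion of depth $\geq \varepsilon 2^m$. This requires (a) a quantitative shadow lemma for transverse subgroups (available, and also used by the paper), (b) an additivity estimate $\phi(\kappa(gp))\approx \phi(\kappa(g))+\phi(\kappa(p))$ along non-backtracking configurations (again essentially Lemma~\ref{sing value gap}), and crucially (c) the peripheral entropy gap to make the tail of $\sum_{p\in P}\exp(-\delta_{\phi}\phi(\kappa(p)))$ small; and even with all of these one must still sum over the (exponentially many) possible entry points $g$ at distance $\sim 2^m$ and argue the shadows are essentially disjoint. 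None of this is carried out, and the total amount of work is at least comparable to Proposition~\ref{finiteness of m}, which proves the finiteness of $\bar m$ using the same three ingredients. A secondary gap: for a word geodesic you assert that during a peripheral excursion of length $\ell$ the regularity defect and the quasi-geodesic defect of the $X$-orbit are ``$O(\ell)$,'' so that $\ell_k=o(t_k)$ gives conditions (2) and (3) of Definition~\ref{sublin. Morse}; this is plausible but is precisely what the paper establishes in Propositions~\ref{(1)}--\ref{(2)} for its own sequence, starting from the positive-density hypothesis rather than directly from sublinear excursions, so it cannot be waved through as bookkeeping. In short: your route is plausible as an alternative to the paper's Birkhoff/BMS argument, but as written it both contradicts its own stated strategy and leaves the central measure estimate unproved.
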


The corollary shows that sublinearly Morse points are abundant in the limit set in this more general setting, in contrast to Theorem \ref{zero measure in non Anosov}.

\subsection*{Acknowledgments} This project was partially supported by grants DMS-2105580 and DMS-2104381 from the National Science Foundation. I would like to thank my advisor, Andrew Zimmer, for his support. I would also like to thank Mitul Islam, Gabriel Pallier, Joan Porti, Yulan Qing, and Feng Zhu for the helpful and encouraging discussions we exchanged.

\section{Settings and Preliminaries}\label{prelim}
Let $X$ be a higher rank symmetric space of non-compact type with a Riemannian symmetric distance $d_X$. By a theorem of Cartan, $X$ can be seen as the quotient of a connected semi-simple Lie group $G$ by a maximal compact Lie subgroup $K $ that is invariant under a Cartan involution. Note that $\G$ caan be supposed to be the identity component of $Isom(X)$.

\subsection{Lie group theory}\label{Lie gp}
Let $\mathfrak{g}$ and $\mathfrak{k}$ be the Lie algebras associated to $G$ and $K$ respectively. Fix the Cartan involution $\tau$ of $\mathfrak{g}$ associated to $K$, it gives rise to the Cartan decomposition $$\mathfrak{g}= \mathfrak{k}\oplus \mathfrak{p}$$ where $\mathfrak{k}$ and $\mathfrak{p}$ are the $1$ and $-1 $ eigenspaces of $\tau$ respectively.

 There is a maximal abelian subspace $\mathfrak{a}\subset \mathfrak{p}$ with respect to the Lie bracket, called a Cartan subalgebra. The dimension of $\mathfrak{a}$ equals the rank of the corresponding symmetric space, throughout the paper we assume $rank(X)=dim(\mathfrak{a}) \geq 2$.
 
 For a linear functional $\alpha \in \mathfrak{a}^*$, the \emph{root space} associated to $\alpha$ is $$g_{\alpha}:=\{x \in \mathfrak{g}\text{ } |\text{ }[y,x]= \alpha(y)x \text{ for all } y \in \mathfrak{a}\},$$
 and the \emph{root system associated to $\mathfrak{a}$} to be $$\Sigma:=\{\alpha\in \mathfrak{a}^*\text{ }| \text{ } \alpha \neq 0 \text{ and }\mathfrak{g}_{\alpha}\neq 0\}.$$
 Then  $$\mathfrak{g}= \mathfrak{g}_0 \oplus\left(\bigoplus_{\alpha \in \Sigma} \mathfrak{g}_{\alpha}\right)$$ is called the \emph{Cartan decomposition} of $\mathfrak{g}$ associated to $\mathfrak{a}$ and the corresponding root system $\Sigma$. Note that by fixing a $H_0 \in \mathfrak{a}- \bigcup_{\alpha \in \Sigma} \text{ ker }(\alpha)$, one can split $\Sigma$ into a positive and a negative part: $$\Sigma^+:= \{ \alpha \in \Sigma \text{ }|\text{ } \alpha(H_0) > 0 \} \quad\text{and}\quad \Sigma^-:= -\Sigma^+.$$
 
 Let $\Delta \subset \Sigma^+$ denote the set of simple roots. 
 The kernels of $\alpha \in \Delta$ divide the Cartan subalgebra into Weyl chambers. We can also view these kernels as the orthogonal complements of certain elements in $\mathfrak{a}$ with respect to the non-degenerate bilinear form $\left<.,.\right>$ on $\mathfrak{a}$. More specifically, for each $\alpha \in \Delta$, we can find $H_{\alpha} \in \mathfrak{a}$ such that for all $X \in \mathfrak{a}$ $$\left<H_{\alpha}, X\right> = \alpha(X).$$
Then we can define $$H_{\alpha}':=\frac{H_{\alpha}}{\left<H_{\alpha}, H_{\alpha}\right>} $$ to be the coroot corresponding to $\alpha$, and $$\omega_{\alpha}(H_{\beta}'):= \begin{cases} 
      1 & \alpha=\beta \\
      0 & \alpha\neq \beta
   \end{cases} $$ the fundamental weight in $\mathfrak{a}$ associated to $\alpha$.

   We can pick a \emph{model Weyl chamber} $\mathfrak{a}^+$ in the Cartan subspace, i.e. 
   \begin{equation*}
       \begin{split}
       \mathfrak{a}^+ &:=\{x \in \mathfrak{a} \text{ }|\text{ } \alpha (x) \geq 0 \text{ for all } \alpha \in \Delta \}\\
       & = \{ x \in \mathfrak{a}\text{ }|\text{ } \left< H_{\alpha}, x\right> \geq 0  \text{ for all } \alpha \in \Delta\} ,\\
       \end{split}
   \end{equation*}
   and define the corresponding \emph{Weyl group} $$W:=N_K(\mathfrak{a})/ Z_K(\mathfrak{a}),$$ where $N_K(\mathfrak{a})$ is the stablizer of $\mathfrak{a}$ in $K$, and $Z_K(\mathfrak{a})$ is the centralizer of $\mathfrak{a}$ in $K$. $W$ acts on $\mathfrak{a}$, and each orbit of $W$ intersects $\mathfrak{a}^+$ exactly once. The Weyl group also acts on the set of Weyl chambers transitively. In $W$, there is an element $w_0$ such that $w_0 (\mathfrak{a}^+)= - \mathfrak{a}^+$. Using $w_0$ we can define a map $\iota: \mathfrak{a} \rightarrow \mathfrak{a}$ called the \emph{opposite involution}, with $\iota(x)= -w_0.x$.

\begin{fact}
    $\iota$ induces a dual map on $\mathfrak{a}^*$, denoted by $\iota^*$, and $\iota^*(\Delta) = \Delta$.
\end{fact}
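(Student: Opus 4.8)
The first assertion is automatic: $\iota$ is a linear automorphism of $\mathfrak{a}$ — in fact an involution, since $\iota^2 = (-w_0)^2 = w_0^2 = \mathrm{id}$ as $w_0$ is an involution in $W$ — so it admits a transpose $\iota^*\colon \mathfrak{a}^*\to\mathfrak{a}^*$, $(\iota^*\lambda)(x) := \lambda(\iota x)$, which is again an involution of $\mathfrak{a}^*$. The content of the Fact is therefore the equality $\iota^*(\Delta)=\Delta$, and the plan is to deduce it from two standard properties of the longest Weyl element $w_0$.

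First I would record that $W$ permutes $\Sigma$: this holds because $W$ is generated by the simple reflections $s_\alpha$ ($\alpha\in\Delta$), each of which preserves the root system, both acting on $\mathfrak{a}$ and in the contragredient action on $\mathfrak{a}^*$. Next I would invoke the defining property of $w_0$ recalled above, $w_0(\mathfrak{a}^+) = -\mathfrak{a}^+$: since a Weyl chamber and a choice of positive system carry the same information, this is equivalent to $w_0(\Sigma^+) = \Sigma^-$. Combining these: for $\alpha\in\Sigma$ and $x\in\mathfrak{a}$ one computes $(\iota^*\alpha)(x) = \alpha(-w_0 x) = -(w_0\cdot\alpha)(x)$, so $\iota^*\alpha = -(w_0\cdot\alpha)$; hence $\iota^*$ permutes $\Sigma$ and carries $\Sigma^+$ onto $-\Sigma^- = \Sigma^+$.

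The last step is to observe that any linear bijection of $\mathfrak{a}^*$ preserving $\Sigma^+$ must preserve $\Delta$. Indeed, membership in $\Delta$ is characterized internally to $\Sigma^+$: a positive root $\alpha$ is simple if and only if it is not of the form $\beta+\gamma$ with $\beta,\gamma\in\Sigma^+$. Since $\iota^*$ is linear and restricts to a bijection of $\Sigma^+$, it preserves this property, so $\iota^*(\Delta)\subseteq\Delta$, and equality holds because $\iota^*$ is a bijection (indeed an involution) of $\mathfrak{a}^*$.

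I do not anticipate a genuine obstacle: the argument is formal once the two facts about $w_0$ are cited (for instance from Knapp or Humphreys). The only thing to watch is the bookkeeping of conventions — whether the transpose is built from $\iota$ or $\iota^{-1}$, the normalization of the $W$-action on $\mathfrak{a}^*$, and whether one transports to $\mathfrak{a}^*$ through the form $\langle\cdot,\cdot\rangle$ — but all of this is harmless precisely because $\iota$ and $w_0$ are involutions and $W$ preserves $\langle\cdot,\cdot\rangle$. Equivalently, one could simply quote the classical fact that $-w_0$ induces an automorphism of the Dynkin diagram of $\Sigma$.
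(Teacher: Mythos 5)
Your argument is correct and complete. The paper records this as a standard Lie-theoretic fact without supplying a proof, and what you have written is precisely the standard justification: $\iota^*\alpha = -w_0\cdot\alpha$, the longest element satisfies $w_0\Sigma^+=\Sigma^-$ so $\iota^*$ preserves $\Sigma^+$, and simple roots are characterized within $\Sigma^+$ as the indecomposable ones, so $\iota^*$ permutes $\Delta$ (equivalently, $-w_0$ is a Dynkin diagram automorphism).
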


We can now fix a \emph{$KAK$ decomposition} of an element in $G$ such that $$g = m_ge^{\kappa(g)}l_g,$$ where $\kappa: G\rightarrow \mathfrak{a}^+$ is the \emph{Cartan projection} that sends $g$ to the unique element in the model chamber $\mathfrak{a}^+$ such that the above decomposition hold, and $m, l$ are elements in $K$.

\begin{fact}
    $\iota(\kappa(g))= \kappa(g^{-1})$ for all $ g \in G$.
\end{fact}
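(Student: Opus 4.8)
The plan is to read the identity off the fixed $KAK$ decomposition, combined with the uniqueness clause built into the Cartan projection. Write $g = m_g e^{\kappa(g)} l_g$ with $\kappa(g) \in \mathfrak{a}^+$ and $m_g, l_g \in K$; then $g^{-1} = l_g^{-1} e^{-\kappa(g)} m_g^{-1}$. Here the middle factor has exponent $-\kappa(g) \in -\mathfrak{a}^+$, which is generally not in the model chamber, so the only real step is to conjugate $e^{-\kappa(g)}$ back into $\mathfrak{a}^+$ by an element of $K$; this alters the two $K$-factors but keeps the decomposition in normal form.

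For this, choose a representative $\dot w_0 \in N_K(\mathfrak{a})$ of the element $w_0 \in W = N_K(\mathfrak{a})/Z_K(\mathfrak{a})$ with $w_0(\mathfrak{a}^+) = -\mathfrak{a}^+$; the same relation gives $\iota(\mathfrak{a}^+) = -w_0(\mathfrak{a}^+) = \mathfrak{a}^+$, so $\iota$ preserves the model chamber. Since $\Ad(\dot w_0)$ restricted to $\mathfrak{a}$ acts exactly as $w_0$, we compute
\[
\dot w_0\, e^{-\kappa(g)}\, \dot w_0^{-1} = \exp(\Ad(\dot w_0)(-\kappa(g))) = \exp(-w_0.\kappa(g)) = \exp(\iota(\kappa(g))),
\]
hence $e^{-\kappa(g)} = \dot w_0^{-1}\, \exp(\iota(\kappa(g)))\, \dot w_0$. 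Substituting back,
\[
g^{-1} = (l_g^{-1}\dot w_0^{-1})\, \exp(\iota(\kappa(g)))\, (\dot w_0 m_g^{-1}),
\]
with both outer factors in $K$ and $\iota(\kappa(g)) \in \mathfrak{a}^+$. This exhibits a $KAK$ decomposition of $g^{-1}$ whose middle exponent lies in the model chamber, so the uniqueness property in the definition of $\kappa$ forces $\kappa(g^{-1}) = \iota(\kappa(g))$.

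I do not expect a genuine obstacle here: the argument uses only the two structural facts already recalled — the existence of a representative $\dot w_0 \in N_K(\mathfrak{a})$ of $w_0$, and $w_0(\mathfrak{a}^+) = -\mathfrak{a}^+$ — together with uniqueness in the $KAK$ decomposition. The one point that deserves to be made explicit is that $\iota$ maps $\mathfrak{a}^+$ to itself, which is precisely what guarantees that the rearranged expression for $g^{-1}$ is again in the normal form defining the Cartan projection.
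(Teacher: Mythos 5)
The paper states this as a bare Fact with no accompanying proof, so there is nothing to compare approach against; your argument is the standard one and it is correct. You rearrange the fixed $KAK$ decomposition $g = m_g e^{\kappa(g)} l_g$ into $g^{-1} = (l_g^{-1}\dot w_0^{-1})\exp(\iota(\kappa(g)))(\dot w_0 m_g^{-1})$ using a representative $\dot w_0 \in N_K(\mathfrak{a})$ of $w_0$, correctly verify that $\iota(\kappa(g)) = -w_0.\kappa(g)$ lies back in $\mathfrak{a}^+$, and then invoke uniqueness of the middle factor in the $KAK$ decomposition. No gaps.
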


The Cartan projection defines a vector-valued distance $d_{\Delta}$ on the symmetric space $X$ in the following manner: for $gK, hK \in X$, let $$d_{\Delta}(gK,hK):= \kappa (h^{-1}g).$$ One thing to keep in mind is that this metric is not necessarily symmetric as $\kappa(h^{-1}g)= \iota(\kappa(g^{-1}h))$ which does not necessarily equal to $\kappa(g^{-1}h)$. Nevertheless, this distance satisfies triangle inequalities \cite[Equation 2.7]{2014arXiv1411.4176K} if we fix $||\cdot||$ to be a $W$-invariant norm on the Cartan subalgebra viewed as a vector space, $$||d_{\Delta}(xK,yK)-d_{\Delta}(xK,y'K)||\leq ||d_{\Delta}(yK,y'K)|| = c\cdot d_X(yK,y'K)$$ and $$||d_{\Delta}(xK,yK)-d_{\Delta}(x'K,yK)||\leq ||d_{\Delta}(xK,x'K)|| = c\cdot d_X(xK,x'K)$$ for some $c \geq 0$.

One should note that the Cartan projection is invariant under action of $K$ on both side, so the distance is well-defined under the choice of representatives $g,h \in G$ of elements $gK, hK $ in the symmetric space $X$. Moreover, by composting $d_{\Delta}$ and $||\cdot||$ we get a distance that is equivalent to the Riemannian distance $d_X$ we fixed at the beginning on $X= G/K$. To abuse notation, we will omit the $K$ when writing the distance between two cosets, and assume $d_X(g,h)=||d_{\Delta}(g,h)||$.

\begin{rmk*}
    In the case of $PSL(d, \R)$, if we fix $K=PSO(d,\R)$, then the maximal flat based at the identity matrix can be realized as the space of diagonal matrices with positive entries in $PSL(d,\R)$. The Cartan projection in this case is composed of the singular values of the matrix, i.e. for all $g \in PSL(d, \R)$ $$\kappa(g) = [\log\sigma_1(g), ..., \log\sigma_d(g)]\in \mathfrak{a}^+, $$ where the singular values are ordered from the largest to the smallest.
    
    The set $\Delta=\{\alpha_k\}$ where $k \in \{1, ..., d-1\}$ consists of simple roots in the form of $$\alpha_k(\kappa(g))= \log \frac{\sigma_k}{\sigma_{k+1}}(g),$$ and the corresponding fundamental weights satisfy $$\omega_k(\kappa(g))= \log \prod_{i=1}^k \sigma_i(g),$$ and the opposite involution $\iota^*$ sends $\alpha_k $ to $\alpha_{d-k}.$
\end{rmk*}

In this paper, we will look at a subset $\theta \subset \Delta$ instead of the whole $\Delta$. Such $\theta$ is called \emph{symmetric} if $\iota^*(\theta) = \theta$. We can define $\mathfrak{a}_{\theta}$ and $\mathfrak{a}_{\theta}^+$ correspondingly:
$$\mathfrak{a}_{\theta}:= \{x \in \mathfrak{a} \text{ }|\text{ } \alpha(x)= 0 \text{ for all } \alpha \notin \theta \} $$
$$ \mathfrak{a}_{\theta}^+:= \{x \in \mathfrak{a}_{\theta} \text{ }|\text{ } \alpha_k (x) > 0 \text{ for all } k \in \theta\},$$
and the dual of $\mathfrak{a}_{\theta}$, $\mathfrak{a}_{\theta}^* $, can be seen as: $$\mathfrak{a}_{\theta}^* = \text{Span}\{\omega_{\alpha} \text{ for }\alpha \in \theta\}
\subset \mathfrak{a}^*,$$ where $\omega_{\alpha}$ are the fundamental weights associated to the coroot $H_{\alpha}'$ of $\alpha \in \theta$.

Moreover, there exist a unique map $p_{\theta}$ projecting $\mathfrak{a}$ onto $ \mathfrak{a}_{\theta}$ such that $$\omega_{\alpha}(p_{\theta}(x)) = \omega_{\alpha}(x)$$ for all $x \in \mathfrak{a}$ and all $ \alpha \in \theta$. With this partial Cartan projection we have $\phi(p_{\theta}(x))= \phi (x)$ for all $\phi \in \mathfrak{a}_{\theta}^*$, this will be useful when we define Patterson--Sullivan measures on the partial flag manifold in Section \ref{PS measure}.

\subsection{Parabolic Subgroup and Flag Manifold}\label{flag mfld}
In order to define the partial flag manifold associated to $\theta$, we first need to define the corresponding \emph{parabolic subgroup $P_{\theta}$}.

\begin{defn}[Parabolic Subgroup]\label{para. subgp}
     Let $$\mathfrak{u}_{\theta}:= \bigoplus_{\alpha \in \Sigma_{\theta}^+}\mathfrak{g}_{\alpha}$$ where $\Sigma_{\theta}^+:= \Sigma^+ \backslash\text{Span} (\Delta\backslash\theta)$. The normalizer of $\mathfrak{u}_{\theta}$ in $G$, denoted by $P_{\theta}$, is called the \emph{parabolic subgroup} associated to $\theta$.

    Similarly, one can also define the parabolic subgroup \emph{opposite} to $\theta$, denoted by $P_{\theta}^{opp}$, as the normalizer of $$\mathfrak{u}_{\theta}^-:=\bigoplus_{\alpha \in \Sigma_{\theta}^+}\mathfrak{g}_{-\iota^*(\alpha).}$$
\end{defn}

This gives rise to a \emph{partial flag manifold $\mathcal{F}_{\theta}:= G/P_{\theta}$}, and a \emph{partial flag manifold opposite }to $\theta$, $\mathcal{F}_{\theta}^{opp}:= G/P_{\theta}^{opp}$. Note that if we pick a representative $k_0$ of the element with longest word length in the Weyl group $W$, then $$k_0P_{\theta} k_0^-= k_0^-P_{\theta} k_0= P_{\iota^*(\theta)}^{opp}.$$ Let us now assume that $\theta$ is symmetric from now on, we can then identify $\mathcal{F}_{\theta}$ with $\mathcal{F}_{\theta}^{opp}$ by sending $mP_{\theta
}$ to $mk_0P_{\theta}^{opp}$. Using the relation above, one can verify easily that the map is well defined, i.e. it is independent of the choice of the representative $m$ because $\text{if } g^{-1} m $ belongs to $ P_{\theta} \text{, then }k_0^{-1}g^{-1}mk_0 \in k_0^{-1}P_{\theta}k_0= P_{\iota(\theta)}^{opp}= P_{\theta}^{opp}$.

Moreover, we can define a map $U_{\theta}$ that maps every $g \in G$ into a partial flag $$U_{\theta}(g) = m_gP_{\theta},$$ where $m_g\in K$ is as in the fixed KAK decomposition of $g$. Note $U_{\theta}$ is uniquely defined if $\alpha(\kappa(g))>0 $ for all $\alpha \in \theta$.

\begin{rmk*}
     In $PSL(d,\R)$, the parabolic subgroups correspond to block upper triangular matrices, and the partial flag manifold $\mathcal{F}_{\alpha_k}$ can be identified with the Grassmannians $Gr_k(\R^{d})$ in $\R^d$.
     The map $U_k(g):=U_{\alpha_k}(g)=m<e_1, ..., e_k> $ is sending group elements in $G$ to a $k$ dimensional subspace in $\R^d$. 
    
\end{rmk*}

There are certain type of discrete subgroups of $G$ whose action on $X$ would give rise to a well defined limit set, namely \emph{$P_{\theta}$-divergent groups}. Let $\Gamma \subset G$ be a discrete subgroup.

\begin{defn}[$P_{\theta}$-divergent]
    $\Gamma$ is \emph{$P_{\theta}$-divergent} if
    $$\min_{\alpha\in \theta} \alpha(\kappa(g_n)) \rightarrow +\infty$$ holds for any sequence $\{g_n\} \subset \Gamma$ of pairwise distinct elements.
\end{defn}

If $\Gamma$ is $P_{\theta}$-divergent, then every unbounded sequence $\{g_n\} \subset \Gamma$ admits a subsequence that converges in $\mathcal{F}_{\theta}$. One can define the \emph{$\theta$ limit set} of $\Gamma$ as the set of accumulation points of $U_{\theta}(\Gamma)$ in $\mathcal{F}_{\theta}$: 
$$\Lambda_{\theta}(\Gamma) := \left\{ \xi \in \mathcal{F}_{\theta}\text{ }|\text{ } \exists \{g_n\}\subset \Gamma \text{ and } \lim_n U_{\theta}(g_n) =\xi\right\}.$$

Two distinct elements $gP_{\theta}$ and $ hP_{\theta}$ in the partial flag manifold $\mathcal{F}_{\theta}$ are \emph{transverse} if $(gP_{\theta}, hk_0P_{\theta}^{opp})$ lies in the orbit of $(P_{\theta}, P_{\theta}^{opp})$ under the action of $G$. With this, we can define:

\begin{defn}[$P_{\theta}$-transverse]
    A discrete subgroup $\Gamma \subset G$ is \emph{$P_{\theta}$-transverse} if it is $P_{\theta}$-divergent, and any two distinct points in the limit set $\Lambda_{\theta}(\Gamma)$ are transverse.
\end{defn}

\begin{rmk*} \label{2.8}
For $\Gamma\subset PSL(d,\R)$, $\Gamma$ is $P_{\alpha_k}$-divergent (and $P_{\alpha_k}$-transverse) if and only if it is $P_{\alpha_{d-k}}$-divergent (and $P_{\alpha_{d-k}}$-transverse respectively), so for simplicity we will assume $k\leq d-k$ and use $P_k$-divergent ($P_{k}$-transverse) to denote $P_{\alpha_k, \alpha_{d-k}}$-divergent ($P_{\alpha_k, \alpha_{d-k}}$-transverse).
\end{rmk*}

\subsection{Patterson--Sullivan Measures}\label{PS measure}
Given a discrete subgroup $\Gamma$ of $G$, $\theta \subset \Delta$, and $\phi \in \mathfrak{a}_{\theta}^*$, we can define the \emph{critical exponent} associated to $\phi$, denoted by $\delta_{\phi}$, as $$\delta_{\phi}:= \inf \left\{ s\in \R^+: \textbf{ } Q_{\Gamma}^{\phi}(s)< \infty\right\},$$ where the sum $Q_{\Gamma}^{\phi}(s):=\sum_{\gamma\in \Gamma} \exp(-s\phi(p_{\theta}\circ \kappa(\gamma)))= \sum_{\gamma \in \Gamma}\exp(-s\phi(\kappa(\gamma))) $ is the Poincar\'e series  associated to $\Gamma$ and $\phi$.

Let $B_{\theta}:G\times \mathcal{F}_{\theta}\rightarrow \mathfrak{a}$ be the \emph{partial Iwasawa cocycle} defined in Section 2.1.5 of \cite{2023arXiv230411515C}, which is a higher rank analog of the usual Busemann cocycle in hyperbolic geometry. A probability measure $\mu$ is a \emph{Patterson--Sullivan measure} if it is supported on $\Lambda_{\theta}(\Gamma)$, and is $\phi$-conformal of dimension $\beta$, i.e. $\gamma_*\mu $ and $\mu$ are absolutely continuous, and for all $ F \in \Lambda_{\theta}(\Gamma)$ and $\gamma \in \Gamma$, $$\frac{d\gamma_*\mu}{d\mu}(F)= e^{-\beta\cdot B_{\theta}(\gamma^{-1}, F)}.$$ 

By Proposition 3.2 in \cite{2023arXiv230411515C}, for $P_{\theta}$-divergent groups, there always exists a Patterson--Sullivan measures.

\begin{prop} \cite[Proposition 3.2]{2023arXiv230411515C}
    If $\theta \subset \Delta$ is symmetric, $\Gamma \subset G$ is $P_{\theta}$-divergent, $\phi \in \mathfrak{a}_{\theta}^*$ and $\delta_{\phi} < \infty$, then there is a $\phi$-Patterson--Sullivan measure $\mu$ for $\Gamma$ of dimension $\delta_{\phi}.$

\end{prop}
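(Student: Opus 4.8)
The plan is to follow the standard Patterson–Sullivan construction, adapted to the higher-rank setting via the partial Cartan projection $p_\theta$ and the linear functional $\phi\in\mathfrak a_\theta^*$. The key observation, already noted in the excerpt, is that $\phi(p_\theta\circ\kappa(\gamma))=\phi(\kappa(\gamma))$, so the Poincaré series $Q_\Gamma^\phi(s)=\sum_{\gamma\in\Gamma}e^{-s\phi(\kappa(\gamma))}$ is the correct replacement for the classical series $\sum_{\gamma}e^{-s\,d(x,\gamma x)}$.

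First I would treat the divergent case, i.e.\ assume $Q_\Gamma^\phi(\delta_\phi)=\infty$. Fix a basepoint $o=eK\in X$ and, for each $s>\delta_\phi$, define the family of finite atomic measures on $\mathcal F_\theta$ (or really on the closure of $\Gamma o$ projected via $U_\theta$) by
\[
\mu_s:=\frac{1}{Q_\Gamma^\phi(s)}\sum_{\gamma\in\Gamma}e^{-s\phi(\kappa(\gamma))}\,\delta_{U_\theta(\gamma)}.
\]
Since $\mathcal F_\theta$ is compact, $\{\mu_s\}$ has a weak-$*$ limit point $\mu$ as $s\downarrow\delta_\phi$ along a subsequence; because the series diverges at $\delta_\phi$, the mass of $\mu_s$ escapes to infinity in $\Gamma o$, so $\mu$ is a probability measure supported on the limit set $\Lambda_\theta(\Gamma)$ (using that $\Gamma$ is $P_\theta$-divergent, so accumulation points of $U_\theta(\gamma_n)$ lie in $\Lambda_\theta(\Gamma)$). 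The conformality relation $\frac{d\gamma_*\mu}{d\mu}(F)=e^{-\delta_\phi\,B_\theta(\gamma^{-1},F)}$ is then obtained by the usual Sullivan shadow argument: one computes how $\mu_s$ transforms under $\gamma$, and the discrepancy between $\phi(\kappa(\gamma_0))$ and $\phi(\kappa(\gamma^{-1}\gamma_0))$ is controlled, in the limit, by the partial Iwasawa cocycle $B_\theta(\gamma^{-1},\cdot)$ of \cite{2023arXiv230411515C}. Concretely, one shows $\phi(\kappa(\gamma^{-1}\gamma_0))-\phi(\kappa(\gamma_0))=\phi\bigl(B_\theta(\gamma^{-1},U_\theta(\gamma_0))\bigr)+o(1)$ as $\gamma_0\to F$, which is precisely the cocycle approximation property (an analogue of the fact that $d(x,\gamma y)-d(x,y)$ approximates the Busemann function); passing to the weak-$*$ limit gives the density formula with $\beta=\delta_\phi$.

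If instead the series converges at $s=\delta_\phi$, one uses the Patterson trick: replace $Q_\Gamma^\phi(s)$ by $\sum_\gamma h(\phi(\kappa(\gamma)))e^{-s\phi(\kappa(\gamma))}$ for a suitable slowly-growing function $h$ (so that the modified series still diverges at $\delta_\phi$ but has the same critical exponent), run the same construction, and check that the extra factor $h$ does not contribute to the transformation rule in the limit since $h(t+O(1))/h(t)\to 1$.

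The main obstacle I anticipate is the cocycle approximation step — verifying that $\phi(\kappa(\gamma^{-1}\gamma_0))-\phi(\kappa(\gamma_0))$ converges to $\phi(B_\theta(\gamma^{-1},F))$ uniformly enough as $\gamma_0 o\to F\in\Lambda_\theta(\Gamma)$. This is exactly the higher-rank subtlety: $\kappa$ is only asymptotically additive, and one needs the $P_\theta$-divergence hypothesis (which forces $\min_{\alpha\in\theta}\alpha(\kappa(\gamma_n))\to\infty$, hence the flag $U_\theta(\gamma_n)$ stabilizes transversally) together with the continuity/cocycle properties of $B_\theta$ established in \cite{2023arXiv230411515C} to make this precise. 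Since this is quoted verbatim as \cite[Proposition 3.2]{2023arXiv230411515C}, I would in fact cite that proof rather than reproduce it, indicating only that the construction is the one sketched above.
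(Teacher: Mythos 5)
The paper offers no proof of this statement—it is quoted verbatim as a citation to \cite[Proposition~3.2]{2023arXiv230411515C}—and your proposal, which sketches the higher-rank Patterson construction (weighting by $e^{-s\phi(\kappa(\gamma))}$, weak-$*$ limits on the compact flag manifold, the Patterson trick in the convergent case, and conformality via the Iwasawa-cocycle approximation) and then explicitly defers to that reference, takes exactly the same route. The sketch correctly identifies the genuinely delicate step, namely the uniform cocycle approximation $\phi(\kappa(\gamma^{-1}\gamma_0))-\phi(\kappa(\gamma_0))\to\phi(B_\theta(\gamma^{-1},F))$, which is where $P_\theta$-divergence is actually used.
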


The limit set $\Lambda_{\theta}(\Gamma)$ can be used to define several flow spaces on which a Bowen--Margulis--Sullivan measure can be constructed. In \cite{2023arXiv230606846K}, Kim--Oh--Wang considered the space $\tilde{\Omega}_{\Gamma}:= \Lambda_{\theta}(\Gamma)^{(2)}\times \mathfrak{a}_{\theta}$, where $\Lambda_{\theta}(\Gamma)^{(2)}$ consists of the pairs of partial flags in $\mathcal{F}_{\theta} \times \mathcal{F}_{\theta}^{opp}$ that are transverse. They showed that $\Gamma$ acts properly discontinuously on $\tilde{\Omega}_{\Gamma}$ \cite[Theorem 9.1]{2023arXiv230606846K}. After picking a linear functional $\phi \in \mathfrak{a}_{\theta}^*$, $\Gamma \backslash \tilde{\Omega}_{\Gamma}$ fibers over a one dimensional flow space $\tilde{\Omega}_{\Gamma, \phi}:=\Lambda_{\theta}(\Gamma)^{(2)}\times\R$ by sending $(\xi, \eta, v)$ to $(\xi, \eta, \phi(v))$. The action of $\Gamma$ descends and is properly discontinuous on $\tilde{\Omega}_{\Gamma, \phi}$, thus we can quotient out the action of $\Gamma$ and get a flow space $\Gamma\backslash(\Lambda_{\theta}(\Gamma)^{(2)}\times\R)=:\Omega_{\Gamma, \phi}$ with a natural translational flow defined \cite[Theorem 9.2]{2023arXiv230606846K}.

However, we will not focus on this flow space in our paper. Instead, we map the subgroup $\Gamma$ into the automorphism group of a properly convex domain, and use the geodesic flow there to prove our results. In order to define the flow space and the associated Bowen--Margulia--Sullivan measure, we first need to introduce some terminologies.

\subsection{Properly Convex Domain}\label{conv. domain}

Let $\P(\R^n)$ be the projectivization of $\R^n-\{0\}$. A open subset $\Omega$ of $\P(\R^n)$ is called a \emph{convex domain} if for any two points $x, y \in \Omega$, there exists a projective line segment that connects $x$ and $y$ and is fully contained in $\Omega$. Furthermore, we call $\Omega$ a \emph{properly convex domain} if there exists a affine chart that fully contains $\Omega$. With such affine chart, we can pick the unique projective line segment connecting $x$ and $y$ that's contained in this chart and use $[x,y]$ to represent it.

There exists a natural distance $d_H$, called the Hilbert distance, on such $\Omega$ defined as follows: let $\A^{n-1}$ be the affine chart that contains $\Omega$ and is equipped with the usual Euclidean norm $|\cdot|$, for any two points $x, y \in \Omega$ let $a, b $ be the two points that lies in $\P(\text{span}(x,y)) \cap \p\Omega$ and are in the order of $a,x,y,b$, then $$d_H(x, y):= \frac{1}{2}\log \left(\frac{|b-x||y-a|}{|b-y||a-x|}
\right).$$ Note that $(\Omega,d_H) $ is a proper, complete, geodesic metric space, but is not necessarily uniquely geodesic.

The automorphism group of $\Omega$ is $$Aut(\Omega):=\{g \in PGL(n, \R) \text{ with } g\Omega= \Omega\}.$$ Elements in $Aut(\Omega)$ act by isometries on $\Omega$ because the cross ratio between any four points is invariant under projective linear transformations.

We have the following fact that relates the Hilbert distance on $\Omega$ to the singular values of $g \in Aut(\Omega)$.
\begin{fact}[Prop. 10.1; \cite{2017arXiv170408711D}]{\label{est. Hilbert distance}}\label{Hilbert dist and singular value}
For any $o \in \Omega$, there exists a constant $E$ such that for any $g \in Aut(\Omega)$, 

\begin{equation}{\label{3}}
    \left|d_H(g.o, o) -\frac{1}{2} \log\frac{\sigma_1}{\sigma_n}(g) \right| \leq E.
\end{equation}

\end{fact}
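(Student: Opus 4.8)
The plan is to reduce the estimate to a comparison between the Hilbert metric and a more computable quantity, namely the projective distance associated to the standard action of $PGL(n,\R)$ on $\P(\R^n)$ together with its dual action on $\P((\R^n)^*)$. The key point is that the $KAK$ decomposition of $g\in PGL(n,\R)$ writes $g = k_1 a k_2$ with $k_1, k_2 \in PO(n)$ and $a = \mathrm{diag}(\sigma_1(g),\dots,\sigma_n(g))$, so $\frac12\log\frac{\sigma_1}{\sigma_n}(g)$ is exactly $\alpha_1\big(\kappa(g)\big)/2$ in the notation of Section \ref{Lie gp} (or rather the full sum of simple roots collapsing to this leading term), and one wants to show that moving a fixed basepoint $o\in\Omega$ by $g$ displaces it by this amount up to bounded error.

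First I would fix an affine chart $\A^{n-1}$ containing $\overline{\Omega}$ in its interior, and fix $o\in\Omega$. Since $\overline\Omega$ is compact and contained in the interior of the chart, there are constants $0 < r < R$ such that the Euclidean ball relations hold uniformly: any projective segment through $o$ hits $\p\Omega$ at points whose chart-distance from $o$ is between $r$ and $R$. Using the left $K$-invariance, I would first observe that $d_H(g.o, o) = d_H(a.o', o)$ up to a bounded error controlled by $\max_{k\in PO(n)} d_H(k.o, o)$, which is finite because $PO(n)$ is compact and acts continuously; here $o' = k_2.o$ ranges over a compact subset of $\Omega$. So it suffices to estimate $d_H(a.o'', o'')$ for $a$ diagonal with ordered entries and $o''$ in a fixed compact subset $C\subset\Omega$. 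For such $a$, the dominant behavior of $a$ on $\P(\R^n)$ is attraction toward the line $[e_1]$ and, dually, toward the hyperplane $e_n^\perp$; writing the cross-ratio formula for $d_H(a.o'', o'')$ in coordinates, the $|b - a.o''|$ and $|a.o'' - a_{\mathrm{endpt}}|$ factors are governed by $\sigma_1$ (the stretching in the $e_1$ direction) and $\sigma_n$ (the contraction in the $e_n$ direction) respectively, while the remaining factors stay in a fixed compact range because $o''\in C$ and $\p\Omega$ stays a definite distance from the relevant coordinate subspaces. This yields $\big|d_H(a.o'', o'') - \tfrac12\log\tfrac{\sigma_1}{\sigma_n}(a)\big| \le E_0$ for a constant $E_0$ depending only on $\Omega$, $o$, and the chart.

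The main obstacle is the potential degeneracy when the attracting line $[e_1]$ or the attracting hyperplane $e_n^\perp$ comes close to $\p\Omega$, which would make the cross-ratio blow up in an uncontrolled way; but since $\Omega$ is properly convex and $o''$ stays in a fixed compact subset, one has uniform lower bounds on the Euclidean distance from $\p\Omega$ to any supporting hyperplane through a boundary point, and this is precisely what keeps the ``error'' cross-ratio factors bounded — this is where properness of the convex domain is essential and where a careful but elementary estimate is needed. Assembling the three bounded errors (the two $K$-reductions and the diagonal estimate) and setting $E$ to be their sum gives \eqref{3}. I would then remark that this is essentially Proposition 10.1 of \cite{2017arXiv170408711D}, and that the only content beyond the cited reference is the translation into the $\sigma_1/\sigma_n$ notation, which we record here for later use.
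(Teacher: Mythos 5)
The paper itself does not prove this statement; it is invoked as a black box with the citation to Proposition~10.1 of the Danciger--Gu\'eritaud--Kassel reference, so there is no ``paper's proof'' to compare against. Your proposal is an attempt to reprove the cited fact, and it contains a genuine gap at its very first reduction step.

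The problem is the appeal to ``left $K$-invariance.'' The Hilbert metric $d_H$ on $\Omega$ is invariant only under $\Aut(\Omega)$, which is in general a proper subgroup of $\mathrm{PGL}(n,\R)$ and does \emph{not} contain $K=\mathrm{PO}(n)$. In the $KAK$ decomposition $g=k_1 a k_2$, the factors $k_1,k_2\in \mathrm{PO}(n)$ do not preserve $\Omega$, do not act by Hilbert isometries, and need not even map $o$ into $\Omega$ --- the quantity $\max_{k\in\mathrm{PO}(n)} d_H(k.o,o)$ you invoke is therefore not even well defined, let alone a priori finite. Consequently the reduction $d_H(g.o,o)\approx d_H(a.o'',o'')$ does not follow: neither $k_2.o$ nor $a k_2.o$ is guaranteed to lie in $\Omega$, and $k_1$ cannot be absorbed by isometry invariance. (You may be tacitly conflating $d_H$ with the Riemannian symmetric distance $d_X$ on $G/K$ from Section~\ref{Lie gp}, which \emph{is} $K$-invariant; these are very different metrics.) With that reduction broken, the cross-ratio computation for the diagonal element $a$ no longer proves anything about $d_H(g.o,o)$.

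To make such an argument work one would instead have to stay entirely inside $\Omega$: for instance, sandwich $\Omega$ between two ellipsoids centered at $o$ to get monotonicity of Hilbert metrics under inclusion and hence bounds by hyperbolic distances, or compare $d_H$ on $\Omega$ with the Hilbert metric on the cone of positive-definite symmetric matrices where the singular-value formula is exact; either way, the key point is that the only group invariance available is that of $\Aut(\Omega)$, and the estimate must be built from cross-ratio monotonicity and compactness of $\bar\Omega$, not from a $KAK$ reduction. Since the paper merely cites the fact and does not supply a proof, I would recommend you either quote the reference directly or rebuild the argument along the lines just indicated.
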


Let $d_{Haus}$ denote the Hausdorff distance between subsets of $\Omega$. For $A,B \subset\Omega$ $$d_{Haus}(A,B):= \max \{\min_{x \in A}d_H(x,B), \min_{y\in B}(d_H(y,A)\}.$$ The following fact gives us a way to bound $d_{Haus}$.

\begin{fact}\cite[Proposition 5.3]{2019arXiv190703277I}\label{bounds on Haus dist}
    For $p_1$, $p_2 \in \Omega$, and $q_1$, $q_2 \in \overline{\Omega}$, if \begin{enumerate}
        \item $q_1, q_2 \in \Omega$, then $d_{Haus}([p_1q_1],[p_2q_2])\leq \max\{d_H(p_1,p_2),d_H(q_1,q_2)\}$;
        \item $q_1=q_2 \in\p\Omega$, then $d_{Haus}([p_1,q_1),[p_2,q_2)) \leq d_H(p_1,p_2).$
    \end{enumerate}
        
\end{fact}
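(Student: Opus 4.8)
The plan is to derive the statement from three standard structural features of Hilbert geometry together with an explicit cross-ratio computation. The features are: (i) cross-ratios, and hence $d_H$, are projective invariants — any $g\in\mathrm{PGL}(n,\R)$ is an isometry from $(\Omega,d_H)$ onto $(g\Omega,d_H)$, so $\Aut(\Omega)$ acts by isometries and we may projectively normalize any finite configuration of points; (ii) the Hilbert metric is monotone under inclusion — if $\Omega'\subset\Omega$ are both properly convex then $d_{H,\Omega}\le d_{H,\Omega'}$ on $\Omega'$ — and, directly from the defining formula, $d_H(x,y)$ depends only on the two points in which the projective line $\overline{xy}$ meets $\partial\Omega$; (iii) if $\pi$ is a projective subspace meeting $\Omega$, the restriction of $d_H$ to the slice $\Omega\cap\pi$ is exactly the Hilbert metric of $\Omega\cap\pi$, so whenever all the relevant points lie in a common plane we may pass to a $2$-dimensional section.

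I would prove item (2) first, since it is the technical core. Fix $x\in[p_1,q)$. The points $p_1,p_2,q$ span a projective subspace $\pi$ of dimension at most $2$; by (iii) replace $\Omega$ by the planar domain $\Omega\cap\pi$. Choose a point $O$ on the line $\overline{p_1p_2}$, interior to $\Omega$ and on neither line $\overline{p_1q}$ nor $\overline{p_2q}$, and let $y\in[p_2,q)$ be the image of $x$ under the perspectivity from $O$ between these two lines; thus $p_1\mapsto p_2$, $q\mapsto q$, and the transversal $\overline{xy}$ passes through $O$. Now express $d_H(x,y)$ as $\frac12\log$ of the cross-ratio of the four points in which $\overline{xy}$ meets $\partial\Omega$; since $\overline{xy}$ and $\overline{p_1p_2}$ both lie in the pencil of lines through $O$, convexity of $\Omega$ lets me compare these boundary points and squeeze the cross-ratio computing $d_H(x,y)$ between $1$ and the cross-ratio computing $d_H(p_1,p_2)$. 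This yields $d_H(x,y)\le d_H(p_1,p_2)$ uniformly in $x\in[p_1,q)$; exchanging the roles of $p_1$ and $p_2$ gives the Hausdorff bound.

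For item (1) I would argue directly rather than chain two instances of item (2): pivoting $[p_1q_1]$ to $[p_2q_1]$ and then to $[p_2q_2]$ gives only the sum $d_H(p_1,p_2)+d_H(q_1,q_2)$. Instead, for $x\in[p_1q_1]$ choose $y\in[p_2q_2]$ as the image of $x$ under a single projective isomorphism $[p_1q_1]\to[p_2q_2]$ that matches endpoints ($p_1\mapsto p_2$ and $q_1\mapsto q_2$), the one remaining degree of freedom being fixed by a cross-ratio normalization of the transversals $\overline{x_ty_t}$. Writing $d_H(x_t,y_t)$ again via the cross-ratio of the boundary points of $\overline{x_ty_t}$, one checks that as $t$ runs over $[0,1]$ the convexity of $\partial\Omega$ forces this cross-ratio to stay between its values at $t=0$ (which computes $d_H(p_1,p_2)$) and at $t=1$ (which computes $d_H(q_1,q_2)$), so $d_H(x,y)\le\max\{d_H(p_1,p_2),d_H(q_1,q_2)\}$; the interior/boundary hypothesis on the $q_i$ is used only to know that $[p_iq_i]$, resp. $[p_i,q)$, is a genuine $d_H$-geodesic.

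The main obstacle is obtaining $\max$ rather than the sum in item (1): $(\Omega,d_H)$ need not be a Busemann (or $\mathrm{CAT}(0)$) space, so there is no ambient convexity of $t\mapsto d_H(x_t,y_t)$ to invoke, and the inequality has to be extracted by choosing the transversal sweep carefully and signing the error terms in the cross-ratio estimate using convexity of $\partial\Omega$. A secondary point is uniformity near the boundary in item (2): one must work with the half-open segments $[p_i,q)$ and verify that the bound persists as $x\to q$, which it does because the relevant cross-ratios degenerate at compatible rates. Granting these estimates, the remaining steps — the reduction to a $2$-plane in item (2) via feature (iii), and the assembly of the two one-sided inclusions into the Hausdorff bounds — are routine.
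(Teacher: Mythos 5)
The paper does not prove this statement; it is cited as a black box from Islam [Proposition~5.3, arXiv:1907.03277], so there is no internal argument to compare yours against. Evaluating your sketch on its own terms: you have correctly identified the right projective tools (slice to a low-dimensional plane; transport $x$ to a companion point $y$ on the other segment by a projective map pinning down the endpoints; compare cross-ratios), and you are right that naively chaining item~(2) twice only gives the sum in item~(1). But the sketch is not a proof. The cross-ratio estimate, which you yourself call ``the main obstacle,'' is the entire content of the statement, and you leave it as an assertion.

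Concretely, for item~(2): the perspectivity center $O$ must lie on $\overline{p_1p_2}\cap\Omega$ but \emph{outside} the segment $[p_1,p_2]$, otherwise the image of $[p_1,q)$ does not land in $[p_2,q)$ at all; this constraint needs to be stated (such $O$ exist since $\Omega$ is open). Even once $O$ is fixed, ``convexity of $\Omega$ lets me compare these boundary points and squeeze the cross-ratio'' is not an argument. As $\ell=\overline{xy}$ rotates in the pencil through $O$ from $\overline{p_1p_2}$ toward $\overline{Oq}$, the boundary points $\ell\cap\partial\Omega$ and the interior points $x_\ell,y_\ell$ all move and all approach $q$ simultaneously; the cross-ratio is a ratio of four shrinking quantities, and convexity of $\partial\Omega$ does not by itself tell you which way the ratio goes. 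A minor but real slip in the same sentence: $d_H(x,y)$ is computed from the cross-ratio of the \emph{two} points where $\overline{xy}$ meets $\partial\Omega$ together with $x$ and $y$ themselves, not ``the four points in which $\overline{xy}$ meets $\partial\Omega$.'' For item~(1) the gap is wider: the assertion that ``convexity of $\partial\Omega$ forces this cross-ratio to stay between its values at $t=0$ and $t=1$'' is a quasi-convexity claim for $t\mapsto d_H(\gamma_1(t),\gamma_2(t))$ under a parametrization you never specify (``cross-ratio normalization of the transversals'' is not a recipe), and nothing in the Hilbert metric supplies it for free, since $(\Omega,d_H)$ is in general neither Busemann nor $\mathrm{CAT}(0)$.

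One concrete way to actually close item~(2) and to see the kind of estimate you need: let $a,b$ be the endpoints of the chord $\overline{p_1p_2}\cap\overline{\Omega}$ and let $T$ be the closed triangle with vertices $a,b,q$. Then the open triangle sits inside $\Omega$, so $d_{H,\Omega}\le d_{H,T}$ there, while $d_{H,T}(p_1,p_2)=d_{H,\Omega}(p_1,p_2)$ because the boundary points of that chord coincide. The Hilbert metric on the open triangle is a two-dimensional normed space in which projective rays to a fixed vertex become parallel rays, so the distance between suitably parametrized rays to $q$ is literally constant; this yields the bound in $T$ and hence in $\Omega$. Some such explicit comparison, via domain enlargement plus a computable model, or via a genuine cross-ratio monotonicity lemma, is what must replace the word ``convexity'' in your sketch.
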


These estimates will be useful in proving our main theorem.

\subsection{Sublinear Rays}\label{sublin. ray}

A sublinear function $\eta:[0, \infty)\rightarrow \R$ is a function such that $$\lim_{t\rightarrow \infty} \frac{\eta(t)}{t}=0.$$

\begin{defn}[Sublinear Ray]\label{sublin ray}
    A path $c:[0, \infty)\rightarrow X$ is a \emph{ $(C, \eta)$ sublinear ray} if exists a constant $C\geq1$ and a sublinear function $\eta$ such that for all $s, t \in [0, \infty)$ $$\frac{1}{C}|s-t|-\eta(\max(s,t))\leq d_X(c(s), c(t))\leq C|s-t|+\eta(\max(s,t)).$$
    
\end{defn}

\begin{rmk*}{\label{convexity of kappa}}
    Without loss of generality, we can assume that $\eta$ is non decreasing and concave (Section 2.2.1; \cite{2022arXiv220804778G}), that is $$\eta(a) \leq \eta(b), \text{ for } a\leq b$$ and $$(1-t)\eta(s)+ t\eta(r) \leq \eta( (1-t)s+tr).$$ In particular, the second equation has an useful derivation $$\eta(as) \leq a \eta(s) \text{, for } a> 1.$$ 
\end{rmk*}

\section{Transverse Representations}\label{transverse rep}

Let $G$ be the semi-simple Lie group defined in Section \ref{prelim}, and $\Gamma$ a discrete subgroup of $G$. A theorem in Canary--Zhang--Zimmer \cite{2023arXiv230411515C} states that whenever $\Gamma$ is $P_{\theta}$-transverse, we can push everything to a properly convex domain $\Omega$. To state the theorem precisely, we need to define the following things.

Let $\Gamma_0 \subset Aut(\Omega)$ be a discrete subgroup. The \emph{full orbital limit set}, denoted by $\Lambda_{\Omega}(\Gamma_0)$, is defined to be the accumulation points of all orbits of $\Gamma_0$ in $\Omega$. A subgroup in $Aut(\Omega)$ is \emph{projectively visible} if for any two points $x, y \in \Lambda_{\Omega}(\Gamma_0) $, the open projective line segment $(x,y) $ is fully contained in $\Omega$, and every point in $\Lambda_{\Omega}(\Gamma_0)$ is a \emph{$C^1$-smooth point} of $\p\Omega$. 

A representation $\rho: \Gamma_0 \rightarrow G$ is \emph{$P_{\theta}$-transverse} if it induces a continuous $\rho$-equivariant embedding $\xi:\Lambda_{\Omega}(\Gamma_0)\rightarrow\mathcal{F}_{\theta}$ such that $\xi(\Lambda_{\Omega}(\Gamma_0))$ is a transverse subset of $\mathcal{F}_{\theta}$ and for all sequences $\{g_n\} \subset \Gamma_0$ with $g_n(a) \rightarrow x \in\Lambda_{\Omega}(\Gamma_0) $ and $g_n^{-1}(a) \rightarrow y \in\Lambda_{\Omega}(\Gamma_0)$ for a(ny) $a\in \Omega$, we have $\rho(g_n)(F) \rightarrow \xi(x)$ for all $F\in \mathcal{F}_{\theta}$ transverse to $\xi(y)$.

\begin{thm} [{\cite[Theorem 4.2]{2022arXiv220104859C};\cite[Theorem 6.2]{2023arXiv230411515C}}]{\label{$P_k$ rep general}}
Suppose $G$ has trivial center, $\theta \subset \Delta$ is symmetric and $P_{\theta}$ contains no simple factors of $G$. If $\Gamma \subset G$ is $P_{\theta} $-transverse, then there exist $D \in \mathbb{N}$, and $\Omega \subset 
\mathbb{P}(\mathbb{R}^D)$ a properly convex domain, $\Gamma_0 \subset Aut(\Omega)$ a projectively visible subgroup, and $\rho: \Gamma_0 \rightarrow G$ a faithful $P_{\theta}$-transverse representation such that:

\begin{enumerate}
                \item $\rho(\Gamma_0)=\Gamma;$
                \item $\min_{\alpha\in \theta}\alpha(\kappa(\rho(g))) = \log \frac{\sigma_1}{\sigma_2}(g)$ for all $g \in \Gamma_0$;
                \item the limit map $\xi: \Lambda_{\Omega}(\Gamma_0)\rightarrow \Lambda_{\theta}(\Gamma)  $ induced by $\rho$ is a homeomorphism;
                \item Fix a base point $o \in \Omega$, then there exists $L\geq 1$, $l\geq 0$ such that for all $g \in \Gamma_0$ $$\frac{1}{L}\cdot d_X(\rho(g),e)-l \leq d_H(g.o,o)\leq L\cdot d_X(\rho(g),e)+l.$$
\end{enumerate}
\end{thm}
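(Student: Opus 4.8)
The statement is obtained by combining \cite[Theorem 4.2]{2022arXiv220104859C} with \cite[Theorem 6.2]{2023arXiv230411515C}, so my plan is to recall the construction those papers carry out and then read off the four numbered conclusions. The starting input is an \emph{irreducible} real representation $\Phi\colon G\to\mathrm{PGL}(D,\R)$ whose highest weight is $\lambda=\sum_{\alpha\in\theta}\omega_\alpha$ (or a common, $\iota^*$‑invariant, integer multiple of it, chosen so that $\Phi$ is defined over $\R$); such a $\Phi$ exists because $G$ is centerless, and it is faithful precisely because $P_\theta$ contains no simple factor of $G$, i.e.\ $\theta$ meets the simple roots of every simple factor, so $\lambda$ restricts nontrivially to each factor. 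For this $\Phi$ one has $P_\theta=\mathrm{Stab}_G([v_\lambda])$ for the highest‑weight line $[v_\lambda]$, the stabilizer of the dual hyperplane is $P_\theta^{opp}$, and the orbit maps give equivariant embeddings $\mathcal F_\theta\hookrightarrow\P(\R^D)$, $\mathcal F_\theta^{opp}\hookrightarrow\P((\R^D)^*)$. One then sets $\Gamma_0:=\Phi(\Gamma)$, $\rho:=(\Phi|_\Gamma)^{-1}\colon\Gamma_0\to\Gamma\subset G$, and lets $\Omega$ be the connected component of the complement in $\P(\R^D)$ of the union of the projective hyperplanes attached to the points of $\Lambda_\theta(\Gamma)\subset\mathcal F_\theta^{opp}$, as singled out in the cited theorems. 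That this $\Omega$ is properly convex and $\Gamma_0$‑invariant, that $\Gamma_0$ is projectively visible, and that $\Lambda_\Omega(\Gamma_0)$ is carried onto $\Lambda_\theta(\Gamma)$, is exactly the content of those theorems, which I would simply quote.

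Granting the construction, conclusions (1) and (3) are immediate: $\rho(\Gamma_0)=\Gamma$ by definition, $\rho$ is faithful because $\Phi$ is, and the limit map $\xi$ is precisely the homeomorphism $\Lambda_\Omega(\Gamma_0)\to\Lambda_\theta(\Gamma)$ provided above. For (2) I would use naturality of the Cartan projection under representations: for $h\in G$ the logarithms of the singular values of $\Phi(h)$ are the weights of $\Phi$, evaluated on $\kappa(h)$ and reordered so as to decrease on $\mathfrak a^+$. Since $\langle\omega_\alpha,\beta^\vee\rangle=\delta_{\alpha\beta}$, the top weight is $\lambda$, every other weight $\mu$ has $\lambda-\mu$ a nonnegative integer combination of simple roots involving at least one element of $\theta$ (because $v_\lambda$ spans a trivial representation of the Levi with simple roots $\Delta\setminus\theta$), and each $\lambda-\beta$ with $\beta\in\theta$ does occur as a weight. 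Evaluating on $\kappa(h)\in\mathfrak a^+$ gives $\log\sigma_1(\Phi(h))=\lambda(\kappa(h))$ and $\log\sigma_2(\Phi(h))=\lambda(\kappa(h))-\min_{\beta\in\theta}\beta(\kappa(h))$, hence $\log\tfrac{\sigma_1}{\sigma_2}(\Phi(h))=\min_{\beta\in\theta}\beta(\kappa(h))$, which is (2) with $h=\rho(g)$.

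For (4), Fact~\ref{Hilbert dist and singular value} gives a constant $E$ with $\bigl|\,d_H(g.o,o)-\tfrac12\log\tfrac{\sigma_1}{\sigma_D}(g)\,\bigr|\le E$ for all $g\in\Gamma_0$. Writing $g=\Phi(\rho(g))$ and using naturality of $\kappa$ once more, together with the fact that the lowest weight of $\Phi$ is $w_0\lambda=-\lambda$ (here $\theta$ symmetric forces $\iota^*\lambda=\lambda$), one gets $\tfrac12\log\tfrac{\sigma_1}{\sigma_D}(g)=\lambda(\kappa(\rho(g)))$. It remains to compare the linear functional $\lambda$ with the norm on $\mathfrak a^+$, recalling $d_X(\rho(g),e)=\|\kappa(\rho(g))\|$. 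The upper bound is $\lambda(x)\le\|\lambda\|\,\|x\|$. For the lower bound one checks $\lambda>0$ on $\mathfrak a^+\setminus\{0\}$: the extreme rays of $\mathfrak a^+$ are spanned by the fundamental coweights, which are positive combinations of coroots (the inverse Cartan matrix of each simple factor has strictly positive entries), so each $\omega_\alpha$ is positive on every extreme ray; since $\theta$ meets the simple roots of every simple factor, $\lambda=\sum_{\alpha\in\theta}\omega_\alpha$ contributes a positive term on each factor and is therefore bounded below by $c\|x\|$ on $\mathfrak a^+$ for some $c>0$, because $\mathfrak a^+\setminus\{0\}$ is a cone over a compact base. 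Putting these together yields $\tfrac1{L}d_X(\rho(g),e)-l\le d_H(g.o,o)\le L\,d_X(\rho(g),e)+l$ with $L=\max(\|\lambda\|,1/c)$ and $l=E$.

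The only genuinely hard ingredient is the one I have chosen to quote rather than reprove, namely that the transversality of $\Lambda_\theta(\Gamma)$ forces the candidate domain $\Omega$ to be properly convex and $\Gamma_0$ to be projectively visible with the expected limit set: this is where one turns ``any two limit flags are transverse'' into ``$\Omega$ contains no projective line'' and ``$\partial\Omega$ is $C^1$ along the limit set'', which is the substance of \cite{2022arXiv220104859C} and \cite{2023arXiv230411515C}. Everything else — the choice of $\Phi$ with the sharp normalization $\log\tfrac{\sigma_1}{\sigma_2}\circ\Phi=\min_{\alpha\in\theta}\alpha\circ\kappa$, and the coercivity of $\lambda$ on $\mathfrak a^+$ — is elementary representation theory, and the remaining care is only in reconciling the normalizations of the two cited theorems.
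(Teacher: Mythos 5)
Your proposal collapses the paper's construction into a single step, and that is where the gap is. The paper first applies Proposition B.1 of Canary--Zhang--Zimmer to obtain a representation $\Phi\colon G\to \mathrm{PSL}(d,\R)$ with the singular-value normalization $\log\tfrac{\sigma_1}{\sigma_2}\circ\Phi=\min_{\alpha\in\theta}\alpha\circ\kappa$ (this is exactly the highest-weight representation you describe, and your weight-theoretic derivation of this identity is correct). But then the paper applies \emph{Theorem 4.2 of \cite{2022arXiv220104859C}} to $\Phi(\Gamma)$, which is already $P_{1,d-1}$-transverse, and this second step produces a new faithful representation $\bar\rho\colon\Phi(\Gamma)\to\Aut(\Omega)\subset \mathrm{PSL}(D,\R)$ into a \emph{different} projective space $\P(\R^D)$, with the group $\Gamma_0=\bar\rho(\Phi(\Gamma))\neq\Phi(\Gamma)$, together with the compatibility $\alpha_1(\kappa(\bar\rho(\Phi(g))))=\alpha_1(\kappa(\Phi(g)))$ and a bi-Lipschitz relation between $\log\tfrac{\sigma_1}{\sigma_D}$ on $\Gamma_0$ and $\log\tfrac{\sigma_1}{\sigma_d}$ on $\Phi(\Gamma)$. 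You instead set $\Gamma_0:=\Phi(\Gamma)$, put $\Omega$ directly in the $\P(\R^D)$ where $\Phi$ lands, and take $\Omega$ to be a connected component of the complement of the dual hyperplanes to $\Lambda_\theta(\Gamma)\subset\mathcal F_\theta^{opp}$, asserting that proper convexity of this $\Omega$ ``is exactly the content of those theorems.'' It isn't: the cited theorems do not assert that the dual-hyperplane complement in the image of a highest-weight representation is properly convex (this can fail already for Anosov groups), and the paper's own use of $\bar\rho$ with $D\neq d$ shows they are not quoting such a statement. So conclusions (3) and (4) are being proved for the wrong $\Gamma_0$, and the existence of a properly convex $\Omega$ on which $\Gamma_0$ acts projectively visibly is left unjustified.

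The parts of your argument that are internally correct are genuinely nice: your weight-multiplicity computation giving $\log\tfrac{\sigma_1}{\sigma_2}(\Phi(h))=\min_{\beta\in\theta}\beta(\kappa(h))$, the identity $\tfrac12\log\tfrac{\sigma_1}{\sigma_D}=\lambda\circ\kappa\circ\rho$ coming from $w_0\lambda=-\lambda$ when $\theta$ is symmetric, and the coercivity of $\lambda=\sum_{\alpha\in\theta}\omega_\alpha$ on $\mathfrak a^+\setminus\{0\}$ (using that each fundamental coweight is a strictly positive combination of coroots within its simple factor and that $\theta$ meets every simple factor). The last gives a cleaner derivation of (4) than the paper's bi-Lipschitz chaining, \emph{provided} the construction of $\Omega$ and $\Gamma_0$ actually lands in the projective space of the highest-weight representation, which is precisely what is not established. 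To fix the proof you would need to interpose the second representation $\bar\rho$ as the paper does, verify its singular-value compatibility, and only then run the endgame; your purely representation-theoretic derivations of (2) and (4) would then need to be replaced by, or reconciled with, the compatibility properties of $\bar\rho$.
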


    \begin{proof}[Proof of Theorem \ref{$P_k$ rep general}]
    
    We first map $G$ into a linear group $PSL(d,\R)$ by the following proposition.

    \begin{prop} \cite[Proposition B.1]{2023arXiv230411515C}\label{general to linear}
    There exist $d \in \N$, a linear representation $\Phi: G \rightarrow SL(d, \R)$, and a $\Phi$-equivariant smooth embedding $\xi_{\Phi}: \mathcal{F}_{\theta} \rightarrow \mathcal{F}_{1, d-1}(\R^d)$ such that:

    \begin{enumerate}
        \item $F_1, F_2$ are transverse in $\mathcal{F}_{\theta} $ if and only if $\xi_{\Phi}(F_1)$ and $\xi_{\Phi}(F_2)$ are;
        \item $\log\frac{\sigma_1}{\sigma_2}(\Phi(g)) = \min_{\alpha \in \theta} \alpha(\kappa(g))$ for all $g \in G$;
        \item If $\alpha(g) > 0$ for all $\alpha \in \theta$, then $$\xi_{\Phi}(U_{\theta}(g))= U_{1,d-1}(\Phi(g));$$
        \item $\Gamma \subset \G$ is $P_{\theta}$-transverse if and only if $\Phi(\Gamma) $ is $P_{1, d-1}$-transverse, and $\xi_{\Phi}$ induces a homeomorphism between $\Lambda_{\theta}(\Gamma)$ and $\Lambda_{1, d-1}(\Phi(\Gamma))$.
    \end{enumerate}
\end{prop}

The assumptions that $G$ has trivial center and $P_{\theta}$ does not contain any simple factor of $G$ assure the injectiveness of $\Phi$, so $\Phi$ is faithful.

 By property (4) in Proposition \ref{general to linear}, $\Phi(\Gamma)$ is $P_{1,d-1}$-transverse. Following the construction of Theorem 4.2 in \cite{2022arXiv220104859C}, there exist a properly convex domain $\Omega \subset \P(\R^D)$, a projectively visible subgroup $\Gamma_0 \subset Aut(\Omega)\subset PSL(D, \R)$, and a faithful representation $\bar{\rho}: \Phi(\Gamma) \rightarrow Aut(\Omega)$ such that:
\begin{enumerate}[label= (\alph*)]
    \item $\bar{\rho}(\Phi(\Gamma)) = \Gamma_0$;
    \item $\alpha_1 (\kappa(\Bar{\rho}(\Phi(g)))) = \alpha_1(\kappa(\Phi(g))) $ for all $ g \in \Gamma$;
    \item $\Bar{\rho}$ induces a limit map $\xi_{\bar{\rho}}: \mathcal{F}_{1,d-1}\rightarrow \mathcal{F}_{1,D-1}$ such that $\xi$ is a homeomorphism between the limit sets $\Lambda_{1,d-1}(\Phi(\Gamma))$ and $\Lambda_{1,D-1}(\Gamma_0)$, $\bar{\rho}$-equivariant, and sends transverse pairs in $\Lambda_{1,d-1}(\Phi(\Gamma))$ to transverse pairs in $\Lambda_{1,D-1}(\Gamma_0)$.
\end{enumerate}

By property (b) of $\bar{\rho}$ and Remark \ref{2.8}, $\Gamma_0$ is $P_{1}$-transverse. According to Lemma 3.4 in \cite{2022arXiv220104859C}, $\pi_1(\Lambda_{1,D-1}(\Gamma_0))= \Lambda_{\Omega}(\Gamma_0)$, where $\pi_1$ is projecting onto the first component in the flag manifold $\mathcal{F}_{1,D-1}$. Moreover, $\pi_1$ has a well defined inverse on $\Lambda_{\Omega}(\Gamma_0)$ $$\pi_1^{-1}(x)= (x, T_{\Omega}x)$$ where $T_{\Omega}x$ is the unique supporting hyperplane of $\Omega$ at $x$ (the uniqueness is guaranteed because $\Gamma_0$ is projectively visible). Composing $\pi_1$ with $\xi_{\bar{\rho}}$ we get a limit map from $\Lambda_{1,d-1}(\Phi(\Gamma))$ to $\Lambda_{\Omega}(\Gamma_0)$ that is a homeomorphism, $\bar{\rho}$-equivariant, and send transverse pairs in $\Lambda_{1,d-1}(\Phi(\Gamma))$ to distinct points in $\Lambda_{\Omega}(\Gamma_0)$.

    Let $\phi =\bar{\rho}\circ \Phi|_{\Gamma}$, this gives us a faithful representation of $\Gamma $ into $Aut(\Omega)$ with image $\rho(\Gamma) = \Gamma_0$. Since $\phi$ is bijective, we can take $$\rho:= (\phi|_{\Gamma})^{-1}$$ and show that $\rho $ satisfies all properties in Theorem \ref{$P_k$ rep general}.

    \begin{center}
   \begin{tikzpicture}
\matrix [column sep=7mm, row sep=5mm] {
 \node (glg)  {$G$}; &
    \node (lg)  {$PSL(d,\R)$};&
   \node (cd)  {$Aut(\Omega)$};\\
    \node (glgs) {$\Gamma$};&
 \node (lgs)  {$\Phi(\Gamma)$}; &
  \node (cds)  {$\Gamma_0$};\\
};

\path[-stealth]

 (lgs) edge node [above] {$\bar{\rho}$}  (cds)
(glgs) edge node [above] {$\Phi$} (lgs)
(glg) edge node [above] {$\Phi$} (lg) ;
 \path (cds) edge [->, bend left] node [below]{$\rho$} (glgs);
\path (cds) to node[midway,sloped]{$\subset$}
(cd);
\path (glgs) to node[midway,sloped]{$\subset$}
(glg);
\path (lgs) to node[midway,sloped]{$\subset$}
(lg);

\end{tikzpicture}
\end{center}

Condition (1) is satisfied by the construction of $\rho$, and condition (2) is true by combining property (2) of $\Phi$ and property (b) of $\bar{\rho}$. The limit map induced by $\rho$ is of the form $\xi :=\xi_{\phi}^{-1} \circ \xi_{\bar{\rho}}^{-1} \circ \pi_1^{-1} $. By construction, $\xi$ is $\rho$-equivariant, and sends distinct pair of points in $\Lambda_{\Omega}(\Gamma_0)$ to transverse flags in $\mathcal{F}_{\theta}$, hence $\xi$ is a $P_{\theta}$-transverse representation. $\xi$ is also a homeomorphism between $\Lambda_{\Omega}(\Gamma_0)$ and $\Lambda_{\theta}(\Gamma)$ as $\xi_{\phi}^{-1}, \xi_{\bar{\rho}}^{-1} ,\pi_1^{-1} $ are, hence condition (3) is satisfied.

For any $g \in \Gamma_0$, $\log\frac{\sigma_1}{\sigma_D}(g)$ is bi-Lipschitz to $\log\frac{\sigma_1}{\sigma_d}(\Bar{\rho}^{-1}(g))$  by construction in \cite{2022arXiv220104859C} and $\log\frac{\sigma_1}{\sigma_d}(\Bar{\rho}^{-1}(g))$ is bi-Lipchitz to $d_X(\rho(g), e)$. The latter is because $\Phi $ is injective, we can pull back the Riemannian symmetric metric on the tangent bundle over $PSL(d,\R)/PSO(d,\R)$ that induces the distance $\log\frac{\sigma_1}{\sigma_d}$ to get another Riemannian metric on the tangent bundle of $X$, which has to be equivalent to the one that induces $d_X$. Substituting $\log\frac{\sigma_1}{\sigma_D}(g)$ with $d_X(\rho(g),e)$ in Fact \ref{Hilbert dist and singular value} shows $\rho$ satisfies property (4), and hence proves the theorem. 
\end{proof}

\subsection{Bowen--Margulis--Sullivan Measure}\label{BMS measure}
Fix $\Gamma\subset G$ a $P_{\theta}-$transverse subgroup.
Let $\Omega$ and $\Gamma_0$ be as in Theorem \ref{$P_k$ rep general}. Once we equip $\Omega$ with the Hilbert distance $d_H$, and focus only on the \emph{straight} geodesics, i.e. the geodesics formed by intersecting projective lines in $\P(\R^n) $ with $\Omega$,
there is a natural geodesic flow $\psi^t$ on the unit tangent bundle $$S\Omega:=\{\text{unit tangent vectors along straight geodesics in } \Omega\}.$$ The geodesic flow $\psi^t$ is commutative with the action of $Aut(\Omega)$, hence the flow descends to a geodesic flow $\psi_{\Gamma_0}^t$ on $\Gamma_0\backslash S\Omega$.

Now in order to put measures on $ S\Omega$ and $\Gamma_0\backslash S\Omega$, we fix a $\phi$-Patterson--Sullivan measure $\mu$ of dimension $\delta_{\phi}$ on $\Lambda_{\theta}(\gamma)$, and pull it back to $\Lambda_{\Omega}(\Gamma_0)$ by the limit map $\xi$ in Theorem \ref{$P_k$ rep general}. Using the Hopf parametrization of 
\begin{equation*}
    \begin{split}
    S_{\Gamma_0}\Omega&:= \{v \in S\Omega | \pi_{fp}(\psi^{\pm \infty} (v)) \in \Lambda_{\Omega}(\Gamma_0)\}\\
    &\cong (\Lambda_{\Omega}(\Gamma_0) \times \Lambda_{\Omega}(\Gamma_0) \backslash \{(x,y) | x \neq y\}) \times \mathbb{R},\\
\end{split}
\end{equation*} we get a Bowen--Margulis--Sullivan measure on $S_{\Gamma_0}\Omega$ 
$$dm(x,y,t) = e^{-\delta_{\phi}\phi([\xi(x), \xi(y)]_{\theta})} d(\xi_*\Bar{\mu})(x) \otimes  d(\xi_*\mu)(y) \otimes ds(t) $$
where $[.,.]_{\theta} : \mathcal{F}_{\theta} ^2 \rightarrow \mathfrak{a}_{\theta}$ satisfies certain conditions to make $m $ a invariant measure under action of $\Gamma_0$ (the explicit construction can be found in section 6 of \cite{2023arXiv230411515C}), and $\bar{\mu}$ is a $\bar{\phi}$ Patterson--Sullivan measure of dimension $\delta_{\bar{\phi}}$ with $\bar{\phi}:= \iota^*(\phi)$. Notice that $\delta_{\bar{\phi}}= \delta_{\phi}$. Moreover, $m$ is also $\psi^t$ invariant. Then $m$ descends to a $\psi^t $ invariant measure $\Bar{m}$ on $ \Gamma_0 \backslash S\Omega$.

\subsection{Hopf--Tsuji--Sullivan Dichotomy}\label{dichotomy}
    
One important result in Canary--Zimmer--Zhang \cite{2023arXiv230411515C} is the Hopf--Tsuji--Sullivan dichotomy of the action of $\Gamma$. In order to state the dichotomy
we need to define a few terms first. $\Gamma$ is \emph{$\phi$-divergent} if the Poincare series diverge at $\delta_{\phi}$, i.e. $Q_{\Gamma}^{\phi}(\delta_{\phi})= \infty$. The $P_{\theta}$-transverse subgroup $\Gamma$ acts on $\Lambda_{\theta}(\Gamma)$ as a convergence subgroup (Proposition 3.3; \cite{2022arXiv220104859C}). With respect to this action, a point $x \in \Lambda_{\theta}(\Gamma)$ is a \emph{conical limit point} if there exist $a\neq b \in \Lambda_{\theta}(\Gamma)$ and a sequence $\{\gamma_n\}\subset\Gamma$ such that $\gamma_n(x)$ converges to $a$, and $\gamma_n(y)$ converges to $b$ for all $y \in \Lambda_{\theta}(\Gamma) \backslash \{x\}$. The \emph{conical limit set} $\Lambda_{\theta}^{con}(\Gamma) \subset \Lambda_{\theta}(\Gamma) $ is the set of all conical limit points of the convergence group action.

    \begin{thm}\cite[Theorem. 8.1, Theorem. 10.1, Theorem 11.1] {2023arXiv230411515C}\label{ergodic in divergent case}
        Let $\Omega \subset \P(\R^D)$, $ \Gamma_0$, $ \rho$ be as defined in Theorem \ref{$P_k$ rep general}. Let $\phi \in \mathfrak{a}^*_{\theta}$, and $\mu $ and $\bar{\mu}$ be some Patterson--Sullivan measures for $\Gamma= \rho(\Gamma_0)$ of dimension $\delta_{\phi}$ associated to $\phi$ and $\phi\circ \iota$ respectively, let $\bar{m}$ be the Bowen--Margulis--Sullivan measure on $\Gamma_0\backslash S_{\Gamma_0}\Omega$ associated to $\mu$ and $\bar{\mu}$. We have the following dichotomy:
        \begin{enumerate}
            \item If $Q_{\Gamma}^{\phi}(\delta_{\phi})=+\infty$, then the action of the geodesic flow $\psi^t$ on $\Gamma_0\backslash S_{\Gamma_0}\Omega$ is ergodic with respect to $\bar{m}$, and $\mu(\Lambda_{\theta}^{con}(\Gamma))= \Bar{\mu}(\Lambda_{\theta}^{con}(\Gamma))=1$. Moreover, $\mu $ and $\Bar{\mu}$ have no atom.

            \item If $Q_{\Gamma}^{\phi}(\delta_{\phi})<+\infty$, then the action of the geodesic flow $\psi^t$ on $\Gamma_0\backslash S_{\Gamma_0}\Omega$ is non-ergodic with respect to $\bar{m}$, and $\mu(\Lambda_{\theta}^{con}(\Gamma))=0$.
        \end{enumerate}
    \end{thm}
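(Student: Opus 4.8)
The plan is to run the classical Hopf--Tsuji--Sullivan argument, adapted to the partial flag / properly convex domain setting, which is exactly what is carried out in \cite{2023arXiv230411515C}; I sketch the three ingredients. \emph{Shadow lemma.} First I would prove a Sullivan-type shadow lemma: fixing the basepoint $o \in \Omega$ (equivalently $eK \in X$ under the correspondence of Theorem \ref{$P_k$ rep general}), there is $r_0$ so that for $r \geq r_0$ the $\mu$-measure of the shadow cast on $\Lambda_{\theta}(\Gamma)$ by the $d_H$-ball of radius $r$ about $\gamma o$ is comparable, up to a multiplicative constant depending only on $r$, to $e^{-\delta_{\phi}\,\phi(\kappa(\gamma))}$. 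This follows from $\phi$-conformality of $\mu$, i.e. $\tfrac{d\gamma_*\mu}{d\mu}(F) = e^{-\delta_{\phi} B_{\theta}(\gamma^{-1},F)}$, together with the estimate that $\phi \circ B_{\theta}(\gamma^{-1},\cdot)$ differs from $\phi(\kappa(\gamma))$ by a bounded additive error on the relevant shadow; this last estimate is where projective visibility of $\Gamma_0$ and $C^1$-smoothness of $\partial\Omega$ along the limit set enter, to control the partial Iwasawa cocycle near $\Lambda_{\theta}(\Gamma)$.

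\emph{Borel--Cantelli for the conical limit set.} A point of $\Lambda_{\theta}(\Gamma)$ is conical precisely when it lies in infinitely many bounded-radius shadows of the orbit, so $\Lambda_{\theta}^{con}(\Gamma)$ is a $\limsup$ of the shadows from the previous step, and the shadow lemma converts the convergence/divergence of $Q_{\Gamma}^{\phi}(\delta_{\phi})$ into that of $\sum_{\gamma} \mu(\mathrm{shadow}_r(\gamma))$. The convergence direction then gives $\mu(\Lambda_{\theta}^{con}(\Gamma)) = 0$ by the first Borel--Cantelli lemma, which is case (2) at the level of the boundary. For case (1) one needs the converse Borel--Cantelli, for which I would establish a quasi-independence estimate on overlapping shadows (again from the shadow lemma plus an orbit-counting/covering argument), yielding $\mu(\Lambda_{\theta}^{con}(\Gamma)) = 1$, and symmetrically $\bar\mu(\Lambda_{\theta}^{con}(\Gamma)) = 1$. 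Absence of atoms is then a standard consequence: a $\delta_{\phi}$-conformal atom is incompatible with $\Gamma$ acting as a non-elementary convergence group of divergent type.

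\emph{Transfer to the geodesic flow.} Using the Hopf parametrization $S_{\Gamma_0}\Omega \cong \{(x,y) \in \Lambda_{\Omega}(\Gamma_0)^2 : x \neq y\} \times \R$ and the Bowen--Margulis--Sullivan measure $dm = e^{-\delta_{\phi}\phi([\xi(x),\xi(y)]_{\theta})}\, d(\xi_*\bar\mu)(x)\, d(\xi_*\mu)(y)\, ds$, the key observation is that a forward ray is recurrent in the compact core of $\Gamma_0\backslash S_{\Gamma_0}\Omega$ iff its forward endpoint is conical. Hence $\mu(\Lambda_{\theta}^{con}(\Gamma)) = 1$ is equivalent to $\bar m$-conservativity of $\psi^t$ (via the Hopf decomposition and Poincar\'e recurrence), while $\mu(\Lambda_{\theta}^{con}(\Gamma)) = 0$ forces total dissipativity and in particular non-ergodicity, giving case (2). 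To upgrade conservativity to ergodicity I would run the Hopf argument: using the product structure of $m$ along the stable/unstable directions (encoded by the two factors $\xi_*\bar\mu$ and $\xi_*\mu$ and the cocycle $[\cdot,\cdot]_{\theta}$) together with the Hopf ratio ergodic theorem, any $\psi^t$-invariant function is a.e. constant, which yields case (1).

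\emph{Main obstacle.} I expect the technical heart to be the shadow lemma together with the converse Borel--Cantelli: controlling $B_{\theta}$ uniformly on shadows near $\Lambda_{\theta}(\Gamma)$ — the higher-rank replacement for ``the Busemann function is essentially distance along a geodesic'' — and then extracting enough independence of shadows to run the second Borel--Cantelli. The Hopf step is also delicate because the Hilbert geodesic flow need not be Anosov and the stable/unstable foliations are only as regular as the $C^1$-smoothness of $\partial\Omega$ along the limit set permits, so the usual absolute-continuity inputs must be replaced by the conformality and transversality built into the construction of $\Omega$, $\Gamma_0$ and $\xi$.
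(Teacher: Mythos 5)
The paper does not prove Theorem \ref{ergodic in divergent case}; it is imported verbatim from Canary--Zhang--Zimmer \cite{2023arXiv230411515C} (Theorems 8.1, 10.1, 11.1), so there is no in-paper argument to compare your sketch against. Your proposal is nonetheless a faithful reconstruction of the Hopf--Tsuji--Sullivan route that source follows: the $\phi$-conformal shadow lemma, the identification of $\Lambda_{\theta}^{con}(\Gamma)$ with a $\limsup$ of orbit shadows and the two directions of Borel--Cantelli, and the transfer to the Hilbert geodesic flow via the Hopf parametrization and the Hopf ratio ergodic theorem argument, with non-ergodicity in the convergent case coming from total dissipativity. You have also correctly flagged the genuine technical pressure points — uniform control of the partial Iwasawa cocycle $B_{\theta}$ on shadows (replacing ``Busemann is essentially distance along a geodesic''), quasi-independence of shadows for the converse Borel--Cantelli, and the fact that the Hilbert flow is not Anosov so the usual absolute-continuity inputs must be replaced by the conformality and transversality built into the construction of $\Omega$, $\Gamma_0$, $\xi$ via Theorem \ref{$P_k$ rep general}. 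One minor point of precision: ``recurrent in the compact core'' should be read as ``returns infinitely often to some fixed compact subset of $\Gamma_0\backslash S_{\Gamma_0}\Omega$,'' since for general transverse groups the quotient need not admit a compact core; with that reading the equivalence with forward conicality is exactly the statement recorded in Lemma \ref{equiv. conical} of the paper.
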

    
    One may notice that the theorem above did not require $\bar{m}$ to be finite. In this case, ergodicity of the geodesic flow means that invariant set has zero measure or the complement of it has zero measure.

\section{The Morse Boundary}\label{Morse bdy}

Kapovich--Leeb--Porti proved a higher rank Morse lemma (\cite[Theorem 5.16, Corollary 5.23]{2014arXiv1411.4176K}) which states that, for a discrete subgroup $\Gamma \subset G$, any geodesic ray in the Cayley graph of $\Gamma=: Cay(\Gamma)$ whose orbit in $X$ is undistorted and $\theta-$uniformly regular has to stay uniformly close to a Weyl cone. Such sequence \emph{flag converges conically} to the unique element in the flag manifold $\mathcal{F}_{\theta}$ that defines the associated Weyl cone (\cite[Definition 2.58]{2017arXiv170302160K}). 

 This allows us to define a \emph{Morse boundary associated to $\Gamma$}, denoted by $\p_{M,\theta}(\Gamma)$ as follows: a point $\zeta \in \Lambda_{\theta}(\Gamma)$ is a Morse boundary point if there exists a geodesic ray $\{g_n\}_{n\in \N}$ in $Cay(\Gamma)$ and a quasi-isometric embedding $q:\N \rightarrow X $ that sends $g_n$ to $g_nK$ and the image forms a $\theta-$uniform regular sequence in X and flag converges conically to $\zeta$. We will again omit $K$, the cosets. Notice that the quasi isometry condition assures that there exists a constant $C$ such that $d_X(g_i, g_{i+1})\leq C$ for all $i \in \N$.

As discussed in the introduction, we want to show that if $\Gamma$ is not $P_{\theta}-$Anosov, this boundary has zero measure in the limit set with respect to a Patterson--Sullivan measure $\mu$ associated to $\Gamma$. We prove the contrapositive statement, which also gives an alternative characterization of Anosov subgroups.

\begin{thm}\label{zero measure in non Anosov}
    Let $\Gamma$ be a $P_{\theta}$-transverse subgroup, $\phi \in \mathfrak{a}_{\theta}^*$ with $\delta_{\phi}<\infty$, and $\mu $ is a Patterson--Sullivan measure of dimension $\delta_{\phi}$ associated to $\Gamma$. If $\mu(\p_{M,\theta}(\Gamma))>0 $, then $\Gamma$ is $P_{\theta}$-Anosov.
\end{thm}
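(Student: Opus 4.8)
The plan is to prove the contrapositive in the stated direction: assuming $\mu(\partial_{M,\theta}(\Gamma)) > 0$, I want to conclude that $\Gamma$ is $P_\theta$-Anosov. The natural characterization of Anosov to aim for is the one in terms of the Cartan projection: $\Gamma$ is $P_\theta$-Anosov iff it is word-hyperbolic, $P_\theta$-divergent, and every geodesic ray in $\mathrm{Cay}(\Gamma)$ maps to an undistorted $\theta$-uniformly regular sequence in $X$ (Theorem 5.5 of \cite{2017arXiv170301647K}). Since we already know $\Gamma$ is $P_\theta$-transverse, the remaining content is to upgrade ``some positive-measure set of geodesic rays is undistorted $\theta$-uniformly regular with conical flag limit'' to ``\emph{all} geodesic rays are.'' The mechanism for this upgrade is ergodicity of the geodesic flow, i.e. Theorem \ref{ergodic in divergent case}.

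\textbf{Key steps.} First I would push the problem into the properly convex domain $\Omega$ via Theorem \ref{$P_k$ rep general}: the Morse boundary point condition on $\zeta = \xi(x)$ translates, using parts (2) and (4) of that theorem together with Fact \ref{Hilbert dist and singular value}, into a statement about the geodesic in $(\Omega, d_H)$ with forward endpoint $x$; uniform $\theta$-regularity of the orbit sequence becomes a uniform lower bound on $\tfrac12\log(\sigma_1/\sigma_2)$ along the $\Gamma_0$-orbit realizing the geodesic, and conical flag convergence becomes conicality of $x$ in $\Lambda_\Omega(\Gamma_0)$, i.e. $x \in \Lambda_\theta^{con}(\Gamma)$. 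Second, I would argue that $\mu(\partial_{M,\theta}(\Gamma)) > 0$ forces $\mu(\Lambda_\theta^{con}(\Gamma)) > 0$, and then invoke Theorem \ref{ergodic in divergent case}: a positive-measure conical limit set forces the divergence case (1), whence $\Gamma$ is $\phi$-divergent, $\mu$ is non-atomic, and the geodesic flow $\psi^t$ on $\Gamma_0 \backslash S_{\Gamma_0}\Omega$ is ergodic with $\mu(\Lambda_\theta^{con}(\Gamma)) = 1$. Third — the crux — I would show that the set of unit tangent vectors whose forward geodesic is ``Morse-like'' (undistorted and uniformly $\theta$-regular) is a flow-invariant Borel set of positive $\bar m$-measure, hence by ergodicity has full measure; descending this back upstairs and combining with convexity/quasiconvexity of the Cayley graph, I would conclude that every geodesic ray in $\mathrm{Cay}(\Gamma)$ is undistorted $\theta$-uniformly regular, which is exactly the Kapovich--Leeb--Porti criterion for $P_\theta$-Anosov.

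\textbf{Main obstacle.} The hard part will be Step three: making precise that ``being the orbit of a Morse geodesic ray in the Cayley graph'' defines a \emph{flow-invariant} set in $\Gamma_0 \backslash S_{\Gamma_0}\Omega$ of positive measure, and then transferring the full-measure conclusion on geodesics in $\Omega$ back to a statement about \emph{all} combinatorial geodesics in $\mathrm{Cay}(\Gamma)$. The subtleties are: (a) $(\Omega, d_H)$ is only coarsely a geodesic metric space quasi-isometric to $\mathrm{Cay}(\Gamma)$ via the orbit map, so one needs the Hausdorff-distance estimates of Fact \ref{bounds on Haus dist} to control how a combinatorial geodesic fellow-travels a straight geodesic in $\Omega$; (b) the uniform $\theta$-regularity constant must be shown to be uniform over the positive-measure set (not just finite pointwise), which is where one uses that the Patterson--Sullivan measure gives positive mass to a set on which a single constant works, via a countable exhaustion $\partial_{M,\theta}(\Gamma) = \bigcup_{D,C} \{\zeta : \text{witnessed with constants} \le D, C\}$ and pigeonholing to a positive-measure piece; (c) deducing ``all geodesic rays'' from ``a.e. geodesic'' requires the convergence-group dynamics on $\Lambda_\theta(\Gamma)$ together with the fact that once one ray to a conical point is uniformly regular and the flow is ergodic, the uniform estimate propagates by the $\Gamma$-action and closure. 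I expect this last transfer, rather than the measure-theoretic input, to require the most care.
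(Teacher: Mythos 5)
Your first two steps match the paper: the identification of $\partial_{M,\theta}(\Gamma)\subset\Lambda_\theta^{con}(\Gamma)$ forces $\mu(\Lambda_\theta^{con}(\Gamma))>0$, hence by Theorem \ref{ergodic in divergent case} the group is $\phi$-divergent, the Patterson--Sullivan measure is non-atomic, and the Hilbert geodesic flow on $\Gamma_0\backslash S_{\Gamma_0}\Omega$ is ergodic. Your step three, however, has a genuine gap at exactly the point you yourself flag.

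The gap is the passage from ``almost every geodesic is Morse-like'' to ``every geodesic ray in $\mathrm{Cay}(\Gamma)$ is undistorted $\theta$-uniformly regular.'' Declaring the Morse-like set flow-invariant and invoking ergodicity gives full $\bar m$-measure, but that does not touch the (a priori nonempty, $\Gamma$-invariant, measure-zero) exceptional set. Your proposed mechanism -- ``propagation by the $\Gamma$-action and closure'' -- cannot close it: the $\Gamma$-action preserves the measure class, so it maps a null set to a null set, and the Morse property is not a closed condition on endpoints in general (the constants degenerate in a limit). Passing from generic to universal is precisely the content of the theorem and requires a different idea.

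The paper's route is different and avoids this jump entirely. It targets the Bochi--Potrie--Sambarino criterion (Fact \ref{equiv char of Anosov}): show there are $a,b>0$ with $\alpha(\kappa(\gamma))\ge a|\gamma|-b$ for all $\gamma\in\Gamma$ and $\alpha\in\theta$. This is verified for \emph{each individual} $\gamma$ as follows. First one replaces $\gamma$ by a loxodromic perturbation $\gamma f_\gamma$ with $f_\gamma$ ranging over a fixed finite set (Fact \ref{loxodromic est.}), so that the axis of $\gamma f_\gamma$ passes within a uniform distance $R$ of the basepoint $o$. Then, and this is the key move, ergodicity is used not to conclude ``a.e.\ direction is Morse'' but to produce a \emph{single} Morse limit point $g_\infty$ whose geodesic direction $w$ has a \emph{dense} forward $\psi^t$-orbit in $\Gamma_0\backslash S_{\Gamma_0}\Omega$. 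Combining density with Proposition \ref{uniform dist} (the Morse orbit $\{g_n.o\}$ stays within uniform distance $R'$ of $[o,g_\infty)$, proved via Lemma \ref{gap} of Bochi--Potrie--Sambarino and projective visibility), one finds translates $a_jg_{n_j}.o$ and $a_jg_{m_j}.o$ approximating $o_\gamma$ and $\gamma f_\gamma.o_\gamma$ on the axis; the Cartan-projection triangle inequality plus the uniform regularity of that one good sequence then yield the lower bound $\alpha_1(\kappa(\gamma f_\gamma))\ge a|\gamma f_\gamma|-b'$ with constants independent of $\gamma$. So the uniformity over all $\gamma$ is bought by the denseness of a single typical orbit, not by a countable pigeonhole over constants; and the universality is bought by the loxodromic-reduction lemma, not by trying to promote a conull set to the whole set.

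In short: your diagnosis of where the difficulty lies is accurate, but the mechanism you sketch for resolving it does not work, and the paper resolves it by a different (loxodromic-approximation via dense orbit) argument that you should study before re-attempting.
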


\begin{rmk*}
    It is worth noting that when reducing to the rank one case, the Morse boundary $\p_{M,\theta}(\Gamma)$ of the subgroup $\Gamma$ we define here does not coincide exactly with the intersection of the conical limit set and the usual Morse boundary of the underlying hyperbolic space $X$. This is mainly because we require the existence of a quasi-isometrically embedded sequence (regularity is satisfied automatically in rank one case) that converges to $\xi \in \Lambda$ for $\xi$ to be a Morse boundary point. However, such sequences could be rare if $\Gamma$ does not admit a convex cocompact action on the convex hull of the limit set.
\end{rmk*}

    We will use the following characterization of $P_{\theta}$-Anosov groups to prove the theorem.

    \begin{fact}\cite[Theorem 8.4]{2016arXiv160501742B}\label{equiv char of Anosov}
        If there exist $a, b >0$ such that for every $\gamma \in \Gamma$ we have \begin{equation}\label{word length}
            \alpha(\kappa(\gamma)) \geq a\cdot|\gamma|-b
        \end{equation} for all $\alpha \in \theta$, and $|\cdot|_S$ a word metric on $Cay(\Gamma)$ associated to a generating set $S$, then $\Gamma$ is word hyperbolic and $P_{\theta}$-Anosov.
    \end{fact}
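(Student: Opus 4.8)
The plan is to follow the strategy of Bochi--Potrie--Sambarino: reduce to a singular-value gap statement in a linear group, establish word hyperbolicity of $\Gamma$ from the asymptotic geometry of the orbit, and finally invoke the standard equivalences to upgrade to the Anosov property. For the reduction I would apply Proposition \ref{general to linear} to get $\Phi\colon G \to SL(d,\R)$ with $\log\frac{\sigma_1}{\sigma_2}(\Phi(g)) = \min_{\alpha\in\theta}\alpha(\kappa(g))$; the hypothesis then says precisely that $\Phi(\Gamma)$ satisfies $\log\frac{\sigma_1}{\sigma_2}(\Phi(\gamma)) \geq a|\gamma| - b$, i.e. $\Phi|_\Gamma$ has a uniform exponential gap between its first two singular values along the word metric. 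Since $\Phi$ intertwines transversality and limit sets (Proposition \ref{general to linear}(1),(4)), it suffices to treat $\Gamma \subset SL(d,\R)$ with $\theta=\{\alpha_1\}$; alternatively the whole argument runs directly in $G$ with $\theta$ and its fundamental weights, at the cost of more notation.

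Next I would extract the easy consequences of the gap hypothesis. Because $\alpha(\kappa(\gamma)) \leq c\|\kappa(\gamma)\| = c\, d_X(\gamma K, K)$ for a fixed $c$, while $d_X(\gamma K, K) \leq (\max_{s\in S}\|\kappa(s)\|)\cdot|\gamma|$ by the triangle inequality for $d_\Delta$, the orbit map $\gamma \mapsto \gamma K$ is a quasi-isometric embedding $Cay(\Gamma) \to X$. Applying the hypothesis to subwords of a geodesic word $e=\gamma_0,\dots,\gamma_n=\gamma$, where $|\gamma_i^{-1}\gamma_j| = j-i$, gives $\alpha\bigl(d_\Delta(\gamma_i K,\gamma_j K)\bigr) = \alpha(\kappa(\gamma_i^{-1}\gamma_j)) \geq a(j-i)-b$, which for $j-i$ large is at least a fixed multiple of $d_X(\gamma_i K,\gamma_j K)$. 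Hence the image of every Cayley geodesic is an undistorted, uniformly $\theta$-regular discrete path with constants independent of the geodesic.

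The technical core is turning this into hyperbolicity of $Cay(\Gamma)$. Here I would invoke the higher-rank Morse lemma of Kapovich--Leeb--Porti \cite{2014arXiv1411.4176K} (equivalently, the dominated-splitting results of Bochi--Potrie--Sambarino \cite{2016arXiv160501742B}): an undistorted, uniformly $\theta$-regular discrete path with fixed constants is \emph{Morse} --- it stays within uniformly bounded Hausdorff distance of a Weyl cone, flag-converges conically to a well-defined point of $\mathcal{F}_\theta$, and any two such paths sharing a basepoint and a conical limit flag stay uniformly close. From this I would show the orbit map sends geodesic triangles in $Cay(\Gamma)$ to uniformly thin configurations in $X$: the three Weyl cones attached to the sides of a triangle $\gamma_1\gamma_2\gamma_3$ pairwise fellow-travel up to a uniform constant, since the pairwise Gromov products of the vertices --- measured through the $\theta$-regular distance and controlled by the Morse property --- bound how far each side wanders before the opposite sides split apart. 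Pulling back through the quasi-isometric embedding of the previous step yields a uniform thin-triangles inequality in $Cay(\Gamma)$, so $\Gamma$ is word hyperbolic. I expect this to be the main obstacle: producing genuine $\delta$-hyperbolicity of the abstract group from the asymptotics of the orbit, with no hyperbolicity assumed at the outset, is exactly where one needs the full local-to-global / stability strength of the Morse lemma rather than a crude coarse estimate, and every constant must be kept uniform over all geodesics.

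Finally, with $\Gamma$ known to be word hyperbolic, I would conclude that $\Gamma$ is $P_\theta$-Anosov by appealing to the standard equivalent characterizations (Kapovich--Leeb--Porti, Gu\'eritaud--Guichard--Kassel--Wienhard, Bochi--Potrie--Sambarino): for a word hyperbolic group the uniform gap $\alpha(\kappa(\gamma)) \geq a|\gamma|-b$ for all $\alpha\in\theta$ is precisely the ``$\theta$-dominated'' criterion, and together with the quasi-isometric embedding and the conical flag convergence already in hand it produces continuous, $\Gamma$-equivariant, dynamics-preserving transverse limit maps $\partial_\infty\Gamma \to \mathcal{F}_\theta$ and $\partial_\infty\Gamma \to \mathcal{F}_\theta^{opp}$ with the required contraction. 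Transporting back along $\Phi$, which preserves transversality and the relevant gap by Proposition \ref{general to linear}, shows that $\Gamma \subset G$ is $P_\theta$-Anosov.
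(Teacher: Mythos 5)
The paper does not actually prove this statement: it is imported verbatim as a Fact, cited to Bochi--Potrie--Sambarino \cite{2016arXiv160501742B} (their Theorem 8.4), so there is no internal argument of the paper to compare yours against. Judged on its own terms, your outline correctly identifies the architecture used in the literature: the gap hypothesis does make the orbit map a quasi-isometric embedding and makes every Cayley-geodesic orbit an undistorted, uniformly $\theta$-regular path with constants independent of the geodesic (your first two steps are correct and essentially immediate), and at that point one could simply quote the Kapovich--Leeb--Porti characterization already cited in this paper's introduction (orbits of all Cayley geodesics are undistorted $\theta$-uniformly regular if and only if $\Gamma$ is $P_{\theta}$-Anosov, which subsumes word hyperbolicity), or quote BPS directly, and be done.

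As a self-contained proof, however, the middle and final steps have a genuine gap. The step you yourself flag as the technical core --- extracting $\delta$-hyperbolicity of $Cay(\Gamma)$ from the regularity of orbit images --- is not established by your triangle argument: the assertion that the three Weyl cones attached to the sides of a triangle ``pairwise fellow-travel up to a uniform constant'' controlled by Gromov products is precisely the nontrivial higher-rank statement whose proof requires the full local-to-global/Morse machinery of Kapovich--Leeb--Porti, and in Bochi--Potrie--Sambarino it is replaced by a different mechanism altogether: they obtain hyperbolicity by building a limit set from dominated sequences, showing the action on it is a uniform convergence action, and invoking Bowditch's topological characterization, not by thin triangles in $X$. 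Moreover, your concluding appeal to ``the $\theta$-dominated criterion'' of BPS is circular in this context, since that criterion \emph{is} the statement being proved. So either you cite KLP/BPS wholesale (in which case the intermediate triangle discussion is unnecessary), or you must supply the local-to-global or convergence-group argument in full; as written the sketch does neither.
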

    
Our goal is to show Equation \eqref{word length} holds for all $\gamma \in \Gamma$. In order to do this, we need another fact to relate an arbitrary $\gamma$ to a loxodromic element.

\begin{fact} \cite[Lemma 2.4]{blayac2024pattersonsullivantheorycoarsecocycles}\label{loxodromic est.}
Let $H \subset Homeo(M)$ be a convergence group acting on a compact metrizable space $M$. Then there exist a finite set $F\subset H$ and $\epsilon >0$ such that for any $\gamma \in H$, there exists $f\in F$ with $\gamma f$ loxodromic, i.e. $\gamma f$ has distinct fixed points in $M$, denoted $(\gamma f)^{\pm}$, and $d\left((\gamma f_{\gamma})^+, (\gamma f_{\gamma})^-\right)>\epsilon$.
    
\end{fact}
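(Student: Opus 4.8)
The plan is to prove the statement by contradiction, using the convergence property to tame a hypothetical sequence of ``bad'' elements and feeding it into a North--South dynamics criterion for loxodromicity. I will assume throughout that $H$ is a non-elementary convergence group, so that we may take $M$ to be the limit set of $H$; then $M$ is perfect (in particular infinite), $H$ contains loxodromic elements, and loxodromic fixed points are dense in $M$. All of this is automatic in the applications in this paper; I also use that $H$ is countable (classical for convergence groups, and automatic here since our groups are finitely generated).

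\emph{The key tool} is the following elementary observation, which is the only hands-on input: if $h\in H$ and there are nonempty open sets $A,R\subseteq M$ with $\overline{A}\cap\overline{R}=\emptyset$, $A\cup R\neq M$, and $h(M\setminus R)\subseteq A$, then $h$ is loxodromic with $h^{+}\in A$ and $h^{-}\in R$. I would prove this by: taking complements and closures to get $h(\overline{A})\subseteq A$ and $h^{-1}(\overline{R})\subseteq R$; iterating the first to see that a forward orbit started outside $R$ stays in $A$, which together with $A\cup R\neq M$ forces $h$ to have infinite order; applying the convergence property to $\{h^{n}\}_{n\ge 1}$ to obtain a subsequence with North--South dynamics whose attracting and repelling points must then lie in $\overline{A}$ and $\overline{R}$ and hence are distinct, so $h$ is loxodromic; and noting $h^{+}\in A$, $h^{-}\in R$ because $h^{+},h^{-}$ are fixed and $h(\overline{A})\subseteq A$, $h^{-1}(\overline{R})\subseteq R$.

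\emph{The contradiction.} If the statement failed, then for the finite set $F_{n}=\{h_{1},\dots,h_{n}\}$ (for a fixed enumeration $H=\{h_{1},h_{2},\dots\}$) and $\varepsilon=1/n$ there would be $\gamma_{n}\in H$ such that: for every $i\le n$, $\gamma_{n}h_{i}$ is not loxodromic, or else is loxodromic with its two fixed points within $1/n$ of each other. If some element occurred infinitely often among the $\gamma_{n}$, say $\gamma$, then $\gamma h$ would be non-loxodromic for all $h\in H$, i.e.\ $H$ would have no loxodromics, which is impossible; so after passing to a subsequence the $\gamma_{n}$ are distinct, and the convergence property gives a further subsequence and $\alpha_{\pm}\in M$ with $\gamma_{n}\to\alpha_{+}$ locally uniformly on $M\setminus\{\alpha_{-}\}$ (and $\gamma_{n}^{-1}\to\alpha_{-}$ away from $\alpha_{+}$). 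Now I would fix a loxodromic $g=g_{0}^{\,N}$, where $g_{0}$ is a loxodromic whose two fixed points avoid $\{\alpha_{+},\alpha_{-}\}$ (such $g_{0}$ exists since loxodromic fixed points are dense and one can prescribe attracting and repelling points in any two disjoint open sets; this last fact is standard) and $N$ is large enough that $g^{-1}(\alpha_{-})$ is within $\tfrac12 d(g_{0}^{-},\alpha_{+})$ of $g_{0}^{-}$; then $r:=d(g^{-1}(\alpha_{-}),\alpha_{+})>0$. Pick $z\in M\setminus\{\alpha_{+},g^{-1}(\alpha_{-})\}$ and set $\varepsilon_{0}=\tfrac14\min\{r,\,d(z,\alpha_{+}),\,d(z,g^{-1}(\alpha_{-}))\}$, $A=B(\alpha_{+},\varepsilon_{0})$, $R=B(g^{-1}(\alpha_{-}),\varepsilon_{0})$. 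Since $g$ is fixed and $\gamma_{n}\to\alpha_{+}$ uniformly on compact subsets of $M\setminus\{\alpha_{-}\}$, one gets $\gamma_{n}g(M\setminus R)\subseteq A$ for all large $n$; as $\overline{A}\cap\overline{R}=\emptyset$ and $z\notin A\cup R$, the key tool shows $\gamma_{n}g$ is loxodromic with $(\gamma_{n}g)^{+}\in A$ and $(\gamma_{n}g)^{-}\in R$, whence $d\big((\gamma_{n}g)^{+},(\gamma_{n}g)^{-}\big)\ge r-2\varepsilon_{0}\ge r/2$. But $g=h_{m}$ for some $m$, so for $n\ge m$ with $1/n<r/2$ this contradicts the defining property of $\gamma_{n}$.

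\emph{Where the difficulty lies.} The reason for arguing by contradiction rather than writing $F$ down explicitly is genuine: the naive candidate — high powers $g_{0}^{\pm N}$ of a single loxodromic — fails, because an arbitrary $\gamma\in H$ is merely a homeomorphism, with no uniform modulus of continuity, and can stretch the small neighborhoods on which the North--South estimate lives by a $\gamma$-dependent amount, so no single $N$ works for all $\gamma$; the contradiction set-up avoids this by only ever needing the estimate for the one bad family, whose dynamics the convergence property controls. The other points requiring care are the possibly degenerate case $\alpha_{+}=\alpha_{-}$ and, more generally, bad interactions between $g$ and the pair $\alpha_{\pm}$ — which is exactly why $g$ is taken to be a high power of a loxodromic whose fixed points avoid $\alpha_{\pm}$ — and, within the key tool, the verifications that the hypotheses force infinite order and that the fixed points land in $A$ and $R$ themselves rather than merely in their closures.
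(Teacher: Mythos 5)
Your proof is correct as far as I can check, but note first that the paper itself does not prove this statement: it is imported verbatim from the cited reference (Lemma 2.4 of Blayac et al.), so there is no in-paper argument to compare against. Your blind argument is the natural one for results of this type and, in spirit, matches how such lemmas are proved in the convergence-group literature: a ping-pong/attracting-set criterion for loxodromicity (your ``key tool'', whose verification -- infinite order via the point $z\notin A\cup R$, exclusion of the parabolic case via the distinct limit points in $\overline A$ and $\overline R$, and the upgrade from $h^{\pm}\in\overline A,\overline R$ to $h^{\pm}\in A,R$ using $h(\overline A)\subseteq A$, $h^{-1}(\overline R)\subseteq R$ -- is complete and correct), combined with a compactness/contradiction argument in which the convergence property tames the hypothetical bad sequence $\{\gamma_n\}$ and a fixed high power $g=g_0^N$ of a loxodromic with fixed points off $\{\alpha_+,\alpha_-\}$ produces uniformly separated loxodromics $\gamma_n g$. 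The quantitative bookkeeping ($r=d(g^{-1}(\alpha_-),\alpha_+)>0$, $\epsilon_0\le r/4$, separation $\ge r-2\epsilon_0\ge r/2$) checks out, and your diagnosis of why a direct construction of $F$ fails (no uniform modulus of continuity for an arbitrary $\gamma$) is exactly right. Two points worth keeping explicit if this were written up: (i) the statement as quoted in the paper is false without a non-elementarity hypothesis (a group with no loxodromic elements is a counterexample), so your standing assumption that $H$ is non-elementary is not cosmetic -- it is what the cited source assumes and what holds in the paper's application to $\Gamma_0\subset\Aut(\Omega)$; and (ii) the standard facts you invoke (classification of elements of convergence groups, north--south dynamics of powers of parabolics and loxodromics, existence of loxodromics and of loxodromic fixed pairs avoiding two prescribed points, countability of $H$) should be cited to Tukia/Gehring--Martin/Bowditch, since they carry the real content behind the ``key tool'' and the choice of $g_0$.
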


Let $\Omega$, $\Gamma_0$, $\rho$, and $\xi$ be as defined in Theorem \ref{$P_k$ rep general}. $\Gamma_0$ is a convergence group acting on the compact space $\p\Omega$. Moreover, because of property (2) of $\rho$, Equation \eqref{word length} holds for all $\gamma\in \Gamma_0$ with respect to $\alpha=\alpha_1$ if and only if it holds for all $\rho(\gamma) \in \Gamma = \rho(\Gamma_0)$ with respect to all $\alpha \in \theta$.

For arbitrary $\gamma \in \Gamma_0$, let $f_{\gamma}$ be the element in $F$ in Fact \ref{loxodromic est.} such that $\gamma f_{\gamma}$ is loxodromic. Because $F$ is finite, there exist a constant $L$ independent of $\gamma$, such that $$\max \left(\max_{f \in F} d_X(\rho(\gamma), \rho(\gamma f )), \max_{f \in F} |f|_{\rho^{-1}(S)}\right)\leq L.$$
Moreover, by triangle inequalities and the geometric interpretation of $d_{\Delta}$, $\gamma$ satisfies Equation \eqref{word length} if and only if $\gamma f_{\gamma}$ does. 

We have reduced the proof of Theorem \ref{zero measure in non Anosov} to find lower bounds of $\alpha_1(\kappa(\gamma f_{\gamma}))$. We will do this by approximating $\gamma f_{\gamma}$ with $\theta$-uniformly regular sequences for $\theta= \{\alpha_1, \alpha_{D-1}\}$. 

Now fix a base point $o\in \Omega$. Due to Fact \ref{loxodromic est.}, there exists a constant $R<+\infty$ such that for all $\gamma\in \Gamma_0$, $$d_H\left( o, ((\gamma f_{\gamma})^+(\gamma f_{\gamma})^-) \right)\leq R.$$ Let  $o_{\gamma} \in ((\gamma f_{\gamma})^+(\gamma f_{\gamma})^-)$ be the point such that $d_H(o_{\gamma}, o)= d_H(o, ((\gamma f_{\gamma})^+(\gamma f_{\gamma})^-))$, $v \in S_{\Gamma_0}\Omega$ be the directional vector based at $o_{\gamma}$ and point toward $(\gamma f_{\gamma})^+$.

Kapovich--Leeb showed in \cite[Theorem 3.18]{2017arXiv170302160K} that when $\Gamma$ is $P_{\theta}$-transverse, a point in the limit set is conical in the convergence group sense if and only if there exists a sequence in $X$ that flag converges to it conically. By construction, $\p_{M,\theta}(\Gamma)\subset\Lambda_{\theta}^{con}(\Gamma)$. The assumption that $\mu(\p_{M,\theta}(\Gamma)) > 0$ in Theorem \ref{zero measure in non Anosov} implies $\mu(\Lambda_{\theta}^{con}(\Gamma))>0$. By the dichotomy in Theorem \ref{ergodic in divergent case}, $\Gamma$ is $\phi$-divergent and the geodesic flow acts ergodically on $\Gamma_0\backslash S_{\Gamma_0}\Omega$. The ergodicity ensures that for $\Bar{m}$ almost every $w \in \Gamma_0\backslash S_{\Gamma_0}\Omega$, the forward orbit $\{\psi^t(v)\}_{t \in [0,\infty)}$ is dense in $\supp(\Bar{m}) \subset \Gamma_0\backslash S_{\Gamma_0}\Omega$. Because $\mu(\p_{M,\theta}(\Gamma)) > 0$, there exist a point $g_{\infty} \in \xi^{-1}( \p_{M,\theta}(\Gamma)) $, and a $\theta$-uniform regular sequence $\{\rho(g_n)\}_{n\in \N} \subset \Gamma$ such that $g_n.o \rightarrow g_{\infty},$ and the forward orbit of $w \in S_{\Gamma_0}\Omega$ based at $o$ and defines the direction towards $g_{\infty}$, is dense. Then we can find $t_j \in [0, \infty)$ and $a_j \in \Gamma_0$ such that $a_j\psi^{t_j}(w)\rightarrow v$ in $S_{\Gamma_0}\Omega$.

\begin{figure}[ht]
    \includegraphics[width=5 in]{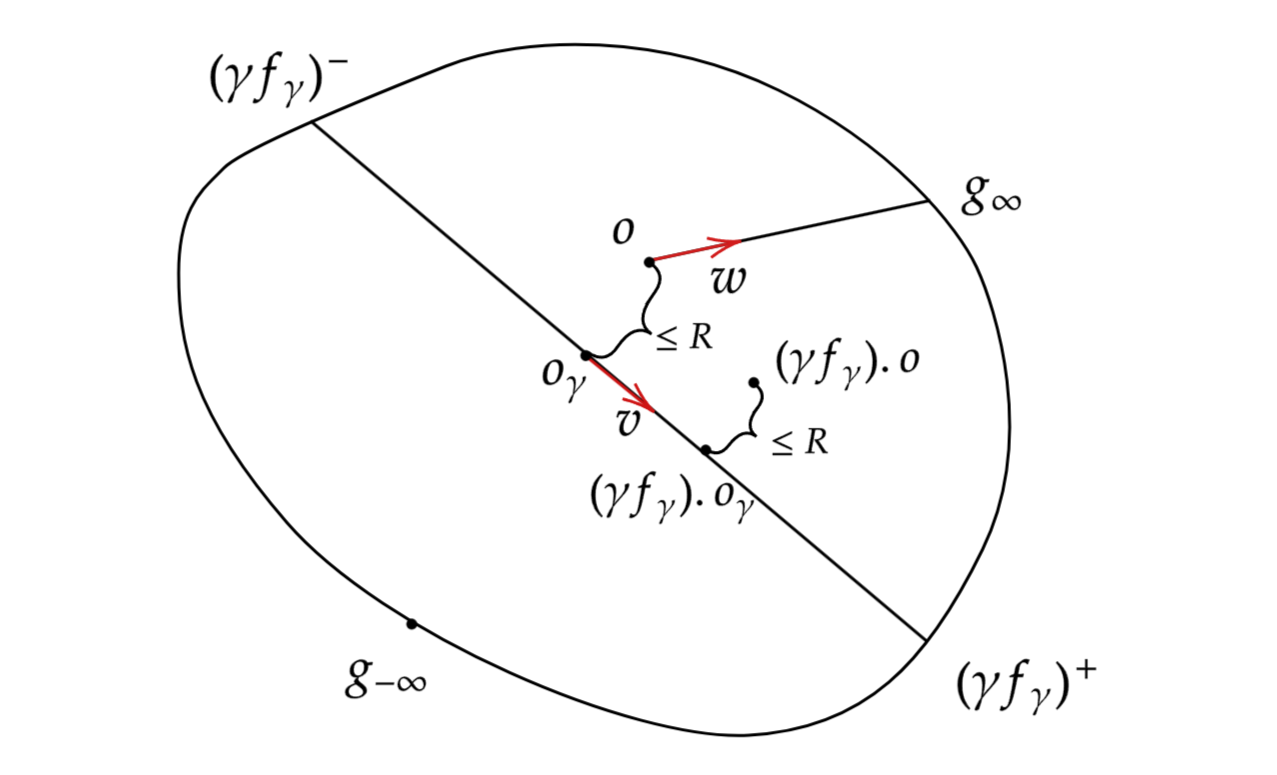}
    \centering
\end{figure}

\begin{prop}\label{uniform dist}
    There exists $R'<\infty$ such that the orbit $\{g_n.o\}$ stays in the $R'$ neighborhood of $[o, g_{\infty}).$
\end{prop}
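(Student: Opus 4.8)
The plan is to push the statement into the Hilbert geometry of $\Omega$ and then to use that $g_\infty$ is a conical limit point of $\Gamma_0$. First I would observe that $\{g_n.o\}$ is a quasi-geodesic ray in $(\Omega,d_H)$ with bounded jumps: $\{g_n\}$ is a geodesic ray in $Cay(\Gamma)$ whose $X$-orbit is undistorted, so $d_X(\rho(g_m),\rho(g_n))\asymp|m-n|$, and combining property~(4) of Theorem~\ref{$P_k$ rep general} with the $\Gamma_0$-invariance of $d_H$ (so that $d_H(g_m.o,g_n.o)=d_H(o,(g_m^{-1}g_n).o)\asymp d_X(\rho(g_m^{-1}g_n),e)$) yields $d_H(g_n.o,g_{n+1}.o)\le C_0$ and $t_n:=d_H(o,g_n.o)\to\infty$ linearly; by construction $g_n.o\to g_\infty\in\partial\Omega$.

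Second, I would record that $g_\infty$ is a conical limit point of $\Gamma_0$ and that the ray $[o,g_\infty)$ is consequently contracting. By construction $g_\infty=\xi^{-1}(\zeta)$ for some $\zeta\in\p_{M,\theta}(\Gamma)\subseteq\Lambda_{\theta}^{con}(\Gamma)$, and $\xi$ conjugates the convergence action of $\Gamma_0$ on $\Lambda_{\Omega}(\Gamma_0)$ to that of $\Gamma$ on $\Lambda_{\theta}(\Gamma)$, so $g_\infty$ is conical; geometrically this gives $R_0<\infty$ and $\gamma_k\in\Gamma_0$ with $\gamma_k.o\to g_\infty$ and $d_H(\gamma_k.o,[o,g_\infty))\le R_0$. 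Since $\Gamma_0$ is projectively visible and every point of $\Lambda_{\Omega}(\Gamma_0)$ is a $C^1$-smooth point of $\partial\Omega$, the boundary is uniformly strictly convex and $C^1$ along the $\Gamma_0$-orbit of $g_\infty$, and together with the recurrence above this forces $[o,g_\infty)$ to be a contracting (equivalently Morse) geodesic ray in $(\Omega,d_H)$: no projective line segment of $\partial\Omega$ is seen from $[o,g_\infty)$ at unbounded depth, so $\Omega$ behaves hyperbolically along this direction.

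Third, I would apply the Morse lemma for contracting rays to conclude that the quasi-geodesic ray $\{g_n.o\}$ with bounded jumps, having the same endpoint $g_\infty$, lies in the $R'$-neighborhood of $[o,g_\infty)$ for a uniform $R'$. For the quantitative bookkeeping I would use Fact~\ref{Hilbert dist and singular value} to translate between $d_H$-distances and singular-value ratios and Fact~\ref{bounds on Haus dist}(2) to compare the straight segments $[o,g_n.o]$ with $[o,g_\infty)$: the point $g_n.o$ approaches $\partial\Omega$ at Euclidean rate $\asymp e^{-2t_n}$, the bounded jumps together with the contraction force the radial projections of $g_n.o$ from $o$ to converge to $g_\infty$ at rate $\asymp e^{-2t_n}$ (a telescoping estimate), and then $g_n.o$ and the point of $[o,g_\infty)$ at $d_H$-distance $t_n$ from $o$ both lie within Euclidean distance $O(e^{-2t_n})$ of $g_\infty$ at depth $\Theta(e^{-2t_n})$, hence at bounded $d_H$-distance.

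The main obstacle is the passage ``$g_\infty$ conical $\Rightarrow$ $[o,g_\infty)$ contracting.'' For a general properly convex domain the Hilbert metric is nowhere near hyperbolic, and distinct quasi-geodesic rays with the same endpoint need not fellow-travel; the hypotheses that $\Gamma_0$ is projectively visible and that $\Lambda_{\Omega}(\Gamma_0)$ consists of $C^1$-smooth points are precisely what rule out the flats of $\partial\Omega$ near the limit set that would obstruct this, and one must check they yield the required uniform contraction along the ray (e.g.\ by propagating the bound $d_H(\gamma_k.o,[o,g_\infty))\le R_0$ using the uniform $C^1$-smoothness). An alternative route first invokes the Kapovich--Leeb--Porti higher-rank Morse lemma in $X$ — the undistorted $\theta$-uniformly regular sequence $\{\rho(g_n)K\}$ stays in a bounded Hausdorff neighborhood of a Weyl cone and flag-converges conically — and then transfers along the orbit quasi-isometry $\rho(\gamma)K\leftrightarrow\gamma.o$; there the difficulty is that the Weyl cone is higher-dimensional, so one still needs the $C^1$-smoothness at $g_\infty$ to collapse it onto the one-dimensional straight ray $[o,g_\infty)$ in $\Omega$.
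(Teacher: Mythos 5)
Your approach and the paper's are genuinely different, and yours has a gap at the crucial step.

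The paper's proof is a direct compactness/contradiction argument that makes essential use of the $\theta$-uniform regularity of the sequence $\{g_n\}$. It first observes that because $\{g_n^{-1}\}$ is also uniformly regular, a backward limit $g_{-\infty}$ exists, and then invokes Lemma~\ref{gap} (Bochi--Potrie--Sambarino) to obtain a \emph{uniform} angle separation between $U_1(g_k^{-1}g_n)$ and $U_{D-1}(g_k^{-1}g_m)$; this guarantees that any subsequential limit $\zeta$ of $g_i^{-1}g_\infty$ satisfies $\zeta\neq g_{-\infty}$. If $d_H(g_i.o,[o,g_\infty))\to\infty$, translating by $g_i^{-1}$ and passing to the ultralimit produces the segment $(g_{-\infty},\zeta)$, which lies inside $\Omega$ by projective visibility, and hence is at finite $d_H$-distance from $o$ --- a contradiction. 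The uniform angle gap, i.e.\ the uniform regularity, is exactly what prevents the backward and forward endpoints from colliding and the translated segments from degenerating.

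Your proposal replaces this with ``$g_\infty$ is conical, hence $[o,g_\infty)$ is contracting, hence the quasi-geodesic $\{g_n.o\}$ fellow-travels it.'' You flag the implication ``conical $\Rightarrow$ contracting'' as the main obstacle, and indeed that is where the argument breaks. Conicality, for a convergence group action, only furnishes \emph{some} sequence $\gamma_k\in\Gamma_0$ with $d_H(\gamma_k.o,[o,g_\infty))\le R_0$; it says nothing about whether \emph{every} bounded-jump quasi-geodesic converging to $g_\infty$ must stay near the ray, which is what the Proposition claims for the specific sequence $\{g_n.o\}$. Projective visibility plus $C^1$-smoothness at limit points gives strict convexity only between pairs of limit points, not the uniform contraction (in the Qing--Rafi--Tiozzo or Sultan sense) along the ray that a Morse lemma would require; in a general projectively visible Hilbert geometry the boundary can still carry line segments away from $\Lambda_\Omega(\Gamma_0)$, and distinct quasi-geodesics with the same endpoint need not fellow-travel. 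Nothing in your first route uses the hypothesis that $\{g_n\}$ is $\theta$-uniformly regular, which is the load-bearing assumption in the Proposition; without it the conclusion is simply false. Your alternative route via the Kapovich--Leeb--Porti higher-rank Morse lemma does invoke uniform regularity, but as you note it lands on a Weyl cone in $X$ rather than the one-dimensional ray in $\Omega$, and the collapse from cone to ray is precisely the content that is not supplied. The paper avoids both difficulties by never invoking a contracting/Morse property of $[o,g_\infty)$ at all and instead exploiting the uniform angle gap directly in $\Omega$.
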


\begin{proof}
    First note that the backward limit $(g_n)^{-1}.o \rightarrow g_{-\infty}$ of the sequence exists by Remark \ref{2.8}, as $\{g_n^{-1}\}$ is also $\theta$-uniform regular. We also need the following lemma:

    \begin{lemma}\cite[Lemma 2.5]{2016arXiv160501742B} \label{gap}
There exist $l\in \N$ and $\delta > 0$ such that if $n<k<m \in \N$ and $\min \{k-n, m-k\} \geq l$, then 
\begin{equation}
    \angle \left( U_1(g_k^{-1}g_n), U_{D-1}(g_k^{-1}g_m)\right) \geq \delta.
\end{equation}

    \end{lemma}
    
   \begin{rmk*}
       In Bochi--Potrie--Sambarino \cite{2016arXiv160501742B}, Lemma \ref{gap} is stated for sequences $\{A_i\}_{i\in I} \in \mathcal{D}(K,1,\mu,c,I)$. One can show the equivalence between the two statements by setting $A_0= g_0,$ and $ A_i= g_{i+1}^{-1}g_{i}$, and realizing the condition that $(A_i)_{i\in I}\in \mathcal{D}(K,1,\mu,c,I)$ is equivalent to $\{g_n\}$ being $\{\alpha_1, \alpha_{D-1}\}$-uniformly regular.

       If we take $\min\{k-n, m-k\}$ to go to infinity, then $$U_1 (g_k^{-1}g_n) \rightarrow U_1(g_{-\infty}),$$ and $$ U_{D-1}(g_k^{-1}g_m) \rightarrow U_{D-1}(g_k^{-1}g_{\infty}).$$ Let $\zeta \in \Lambda_{\Omega}\left(\Gamma_0\right)$ be any limit point of $\{g_k^{-1}g_{\infty}\}$. Lemma \ref{gap} assures that $U_{1}(g_k^{-1}g_m) \in U_{D-1}(g_k^{-1}g_m)$ and $U_1(g_k^{-1}g_n)$ have an uniform angle separation $\eps$ for all $n<k<m$, so $\zeta\neq g_{-\infty}$.
   \end{rmk*}

We prove Proposition \ref{uniform dist} by contradiction. Assume there exist $i\rightarrow \infty$ such that $$d_H\left(g_{i}.o, [o,g_{\infty})\right) =d_H\left( o, [g_{i}^{-1}.o, g_{i}^{-1}g_{\infty})\right)\rightarrow \infty.$$ 
Upto taking subsequences, we can assume $g_{i}^{-1}g_{\infty} \rightarrow \zeta$, then we have
$$d_H\left(o, g_{i}^{-1}.[o, g_{\infty})\right)\rightarrow d_H\left(o, (g^-, \zeta)\right) = \infty.$$ This is a contradiction because $\Gamma_0$ is a projectively visible group, so $(g^-, \zeta)\subset \Omega$. 
    \end{proof}

\begin{proof}[Proof of Theorem \ref{zero measure in non Anosov}]

    By Proposition \ref{uniform dist} and the uniform upper bound $C$ on $d_H(g_i.o, g_{i+1}.o)$, there exists $n_j$ such that $d_H\left(g_{n_j}.o, \pi_{fp}\left(\psi^{t_j}(w)\right)\right) \leq R'+C,$ where $\pi_{fp}$ is the footpoint projection that projects a unit tangent vector to its footpoint in $\Omega$.
     Moreover, there exist $a_j \in \Gamma_0$ and $o' \in B_{R'+C+R}(o)$ such that $a_jg_{n_j}.o \rightarrow o'$ as $j \rightarrow \infty$. 
    
    Let $d:= d_H(o_{\gamma}, \gamma f_{\gamma}.o_{\gamma
    })$. Since $\gamma f_{\gamma}$ preserves $(\gamma f_{\gamma})^{\pm}$, $(\gamma f_{\gamma}^-, \gamma f_{\gamma}^+)$ is a axis of $\gamma f_{\gamma}$, i.e. $\gamma f_{\gamma}. o_{\gamma} \in (\gamma f_{\gamma}^-, \gamma f_{\gamma}^+)$. Then $\psi^da_j\psi^{t_j}(w) = a_j \psi^{t_j +d}(w)\rightarrow v'$, where $v'$ is the unit tangent vector based at $\gamma f_{\gamma}.o_{\gamma}$ and point towards $(\gamma f_{\gamma})^+$. Similarly we can find $m_j$ such that $g_{m_j}.o$ is $R'+C$ close to the footpoint of $a_j \psi^{t_j +d}(w)$, and $a_j g_{m_j}.o$ converges to $x \in B_{R'+C+R}(\gamma f_{\gamma}.o)$. 

     Using triangle inequality of the Cartan projection $\kappa$, for $j$ large enough we have 
     \begin{equation}
     \begin{split}
         \alpha_1(\kappa (\gamma f_{\gamma})) &\geq \alpha_1\left(\kappa((a_jg_{n_j})^{-1}(a_jg_{m_j}  )\right)-2E\\
         &= \alpha_1(\kappa(g_{n_j}^{-1}g_{m_j}))-2E\\
         &\geq a\cdot (m_j-n_j)-(b+2E)
     \end{split}  
     \end{equation}
      for $a, b$ depending only on the uniform regular sequence $\{g_n\}$, and $E>0$ depending only on $R', R,$ and $C$. Note that $|g_{m_j}^{-1}g_{n_j}|=(m_j-n_j)$ is quasi-isometric to $d_H(a_jg_{m_j}.o, a_jg_{n_j}.o)$, and hence to $d_H(o, \gamma f_{\gamma}.o)$. Thus $(m_j-n_j)$ is bounded from above and below for all $j$. 
      Moreover, there exist $C'$ depending only on $R'+C+R$ such that $|a_jg_{n_j}|, |(\gamma f_{\gamma})^{-1}a_jg_{m_j}|\leq C'$ for $j$ large enough, then the equation above turns into
\begin{equation}
    \begin{split}
        \alpha_1(\kappa (\gamma f_{\gamma})) &\geq a\cdot |(a_jg_{n_j})^{-1}(a_jg_{m_j})|-(b+2E)\\
        &\geq a\cdot \left(|\gamma f_{\gamma}| -|a_jg_{n_j}|-|(\gamma f_{\gamma})^{-1}(a_jg_{m_j})|\right)-(b+2E) \\
        &\geq a\cdot |\gamma f_{\gamma}|-b' 
    \end{split}
\end{equation}
with $b'=2aC'+b+2E$. Since $a, b'$ and the finite set $F$ are independent of the arbitrary $\gamma$ we pick, we obtain Equation \eqref{word length} for every $\gamma\in \Gamma_0$ for a uniform $a,b.$ By Fact \ref{equiv char of Anosov}, $\Gamma_0$ has to be $P_{1,D-1}$-Anosov, and $\Gamma= \rho(\Gamma_0)$ is $P_{\theta}$-Anosov.
     \end{proof}

\section{The sublinearly Morse Boundary}\label{sublin Morse bdy}
In this section, let $G$, $X$, $\Gamma$ be as defined before. Let $e = K \in X$ be a base point in $X$. We use the following theorem to define the $P_{\theta}$-sublinearly Morse boundary associated to $\Gamma$.

\begin{thm}\label{Cauchy in general}
    Given a $P_{\theta}$-sublinearly Morse sequence $\{g_n\} \subset G$, $\{U_{\theta}(g_n)\}$ converges in $\mathcal{F}_{\theta}.$
\end{thm}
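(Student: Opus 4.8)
The plan is to show $\{U_\theta(g_n)\}$ is Cauchy in $\mathcal{F}_\theta$ with respect to a metric induced by a suitable gauge (angle) on flags, and then conclude by completeness/compactness of $\mathcal{F}_\theta$. The key mechanism is condition (3) in Definition \ref{sublin. Morse}: the Cartan projections of the segments $g_n^{-1}g_{n+i}$ stay, up to a sublinear error, uniformly away from the walls $\ker\alpha$, $\alpha\in\theta$. This is precisely the hypothesis needed for a higher-rank angle comparison estimate: if $\alpha(\kappa(h))$ is large compared to $d_X(e,gh)$ for all $\alpha\in\theta$, then $U_\theta(gh)$ is close to $U_\theta(g)$ in $\mathcal{F}_\theta$. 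Indeed, via the linear embedding $\Phi$ of Proposition \ref{general to linear}, this reduces to the standard fact in $PSL(d,\R)$ that $U_{1,d-1}(AB)$ is close to $U_{1,d-1}(A)$ when $\log\frac{\sigma_1}{\sigma_2}(AB)$ dominates $\log\frac{\sigma_2}{\sigma_1}(B)$ — a quantitative version of the singular value / exterior power contraction estimate.

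First I would fix a metric $d_{\mathcal F}$ on $\mathcal{F}_\theta$ coming from the Fubini--Study metric on the flag variety (after $\Phi$), and record the estimate: there exist constants $c_0, c_1 > 0$ such that for $g, h \in G$ with $\alpha(\kappa(g))$ and $\alpha(\kappa(gh))$ large for all $\alpha \in \theta$,
\begin{equation*}
d_{\mathcal F}\bigl(U_\theta(gh), U_\theta(g)\bigr) \le c_0 \exp\!\left(-c_1 \min_{\alpha\in\theta}\alpha(\kappa(gh)) + c_1\, d_X(e, h)\right).
\end{equation*}
Next, set $D_n := d_X(g_0K, g_nK)$; by condition (2) and the fact that $\bar\eta$ is sublinear, $D_n \to \infty$ and $D_n$ is comparable (up to sublinear error) to $n/C$, so in particular $D_n$ is roughly monotone and the tails $D_n$ grow. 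Applying the estimate to $g = g_n$, $h = g_n^{-1}g_{n+1}$ and using condition (3) with $i=1$ gives
\begin{equation*}
d_{\mathcal F}\bigl(U_\theta(g_{n+1}), U_\theta(g_n)\bigr) \le c_0 \exp\!\bigl(-c_1 a\, d_X(g_nK, g_{n+1}K) + c_1 \eta'(D_{n+1}) + c_1\, d_X(e,g_n^{-1}g_{n+1})\bigr);
\end{equation*}
here by condition (1) and the triangle inequality $d_X(g_nK,g_{n+1}K) = d_X(e, g_n^{-1}g_{n+1}) \le \eta(D_n)$, which is sublinear in $D_n$. The cheap consequential bound $d_{\mathcal F}(U_\theta(g_{n+1}),U_\theta(g_n)) \le c_0 \exp(c_1\eta'(D_{n+1}) + c_1\eta(D_n))$ is not yet summable; to get summability I would instead apply the estimate across a long block, i.e. to $g = g_n$ and $h = g_n^{-1}g_{m}$ for $m \gg n$, using condition (3) directly: $\alpha(\kappa(g_n^{-1}g_m)) \ge a\, d_X(g_nK,g_mK) - \eta'(D_m)$, and by the sublinear-ray property (2), $d_X(g_nK,g_mK) \ge \frac1C(m-n) - \bar\eta(D_m)$ grows linearly in $m-n$, while $d_X(e, g_n^{-1}g_m) = d_X(g_nK,g_mK)$ appears with the wrong sign — so I must be careful. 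The right move is to telescope in geometric blocks: choose $m = m(n)$ so that $d_X(g_nK, g_mK)$ is, say, $2 d_X(g_0K,g_nK)$; then on such a block the exponent $-c_1 a\, d_X(g_nK,g_mK) + c_1 d_X(g_nK,g_mK)$ does not obviously close, which means the clean statement we want is really that $U_\theta(g_n)$ is Cauchy, proved by showing $\mathrm{diam}\{U_\theta(g_k) : k \ge n\} \to 0$ using that the Weyl cone directions $U_\theta(g_n^{-1}g_m)$ all lie within a shrinking angular neighborhood of $U_\theta(g_n^{-1}g_{\infty})$.

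So the cleanest route, which I would adopt: show directly that $\{U_\theta(g_n)\}$ has a unique accumulation point. Since $\mathcal{F}_\theta$ is compact, some subsequence $U_\theta(g_{n_k}) \to \xi$. For any other subsequential limit $\xi'$, realized along $\{n'_k\}$, interleave the two index sequences and apply the higher-rank contraction estimate to the segments $g_{n_k}^{-1}g_{n'_l}$ for $n_k < n'_l$: condition (3) forces $\angle(U_\theta(g_{n_k}), U_\theta(g_{n'_l}))$ to be bounded by $c_0\exp(-c_1 a\, d_X(g_{n_k}K,g_{n'_l}K) + c_1\eta'(\cdot))$ wherever $d_X(g_{n_k}K, g_{n'_l}K)$ is large relative to the sublinear terms — and by condition (2) this distance is eventually as large as we like once $n_k$ is large, because the sublinear ray escapes every bounded set and has the linear lower bound $\frac1C|n'_l - n_k| - \bar\eta$. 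Combined with condition (1), which controls consecutive jumps by $\eta(D_n) = o(D_n)$, a standard sublinear-neighborhood / nested-cone argument (as in \cite{2020arXiv201103481Q,2018arXiv180105163P}) shows the flags $U_\theta(g_n)$ are trapped in Weyl cones of shrinking opening, forcing $\xi = \xi'$; hence the whole sequence converges.

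\textbf{Main obstacle.} The crux is establishing the quantitative higher-rank contraction estimate relating $d_{\mathcal F}(U_\theta(gh), U_\theta(g))$ to $\min_{\alpha\in\theta}\alpha(\kappa(gh))$ and to a correction term controlled by $d_X(e,h)$, and then checking that, along a $P_\theta$-sublinearly Morse sequence, conditions (1)--(3) make the good (contracting) term dominate the sublinear error terms in the long run. The delicate bookkeeping is that the same quantity $d_X(g_nK, g_{n+i}K)$ enters both with a favorable coefficient $a$ (from condition (3)) and, a priori, unfavorably in the error of the flag estimate; resolving this requires using that the contraction estimate's correction term depends on $d_X(e,h)$ only through its comparison to $\alpha(\kappa(gh))$, i.e. it is really $\exp(-c_1(\alpha(\kappa(gh)) - d_X(e,h)))$ and one exploits $\alpha(\kappa(gh)) \ge a\, d_X(e,h) - \eta'$, so the net exponent is $-c_1((a-1)d_X(e,h)) + c_1\eta'$ — which is genuinely negative and of linear size only if $a > 1$. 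If $a \le 1$ one must instead pass through the $\Phi$-image where condition (3) becomes the single inequality $\log\frac{\sigma_1}{\sigma_2}(\Phi(g_n^{-1}g_{n+i})) \ge a' d_X(\cdot) - \eta'$ and use the sharper projective contraction estimate whose error involves $\log\frac{\sigma_2}{\sigma_3}$ rather than $\log\frac{\sigma_1}{\sigma_d}$; getting that sharper form, or else arguing purely via the nested-cone/sublinear-neighborhood picture without a summable bound, is where the real work lies.
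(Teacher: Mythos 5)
Your general plan — reduce to the linear case via $\Phi$, apply a singular-value contraction estimate for flags, telescope over consecutive indices, and sum — is the same as the paper's. But the proposal, as written, has a genuine gap in where condition~(3) is applied, and as a consequence you talk yourself into a false tension (the ``$a>1$ vs.\ $a\le 1$'' worry, and the ``favorable/unfavorable same quantity'' worry) that the paper's argument simply does not have.

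The paper telescopes $d_1(U_1(g_n),U_1(g_m))\le\sum_{i=n}^{m-1}d_1(U_1(g_i),U_1(g_{i+1}))$ and bounds each term via BPS Lemma A.4 with $A=g_i$, $B=g_i^{-1}g_{i+1}$: the bound is $\frac{\sigma_1}{\sigma_d}(B^{-1})\cdot\frac{\sigma_2}{\sigma_1}(A)$, i.e.\ $\exp(d_Y(g_i,g_{i+1}))\cdot\exp(-\alpha_1(\kappa(g_i)))$. Crucially, the favorable term involves the singular-value gap of the \emph{absolute} element $g_i$, not of the increment $g_i^{-1}g_{i+1}$. The paper then applies condition~(3) \emph{with base point $g_0$} (take $n=0$ in Definition~\ref{sublin.\ Morse}(3)) plus a triangle inequality to get $\alpha_1(\kappa(g_i))\gtrsim a\, d_X(g_0,g_i)-\eta'(d_X(g_0,g_i))$, while the unfavorable term is controlled by condition~(1): $d_Y(g_i,g_{i+1})\le\eta(d_X(g_0,g_i))$, a \emph{sublinear} quantity. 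So the exponent per term is $\approx -a\,d_X(g_0,g_i)+(\eta+\eta')(d_X(g_0,g_i))$, which is genuinely negative and of linear magnitude for any $a>0$; there is no cancellation between same-sized quantities with coefficients $a$ and $1$. Your proposal instead invokes condition~(3) ``with $i=1$'' to control $\alpha(\kappa(g_n^{-1}g_{n+1}))$, which is the gap of the \emph{increment}, not the gap of the base point entering the BPS estimate. That substitution is not valid — the BPS contraction genuinely needs $\sigma_2/\sigma_1(g_i)$ small, and there is no way to derive that from the gap of the jump $g_i^{-1}g_{i+1}$ alone. This misplacement is what creates the illusion of a conflict between the $a\,d_X$ gain and the $d_X$ loss, and is why you then abandon the clean summable telescoping bound for a vaguer ``unique accumulation point / nested cones'' argument that is not carried out.

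There is a second ingredient missing: even after the exponent per term is $\lesssim -a\,d_X(g_0,g_i)$ (up to sublinear error), you still need $d_X(g_0,g_i)$ to grow at least linearly in $i$ for the geometric series to converge. The paper establishes this in its Claim using condition~(2) and discreteness of $\Gamma$ (a positive lower bound on nearest-neighbor orbit distance). Your proposal asserts linear growth from condition~(2) heuristically but does not pin it down, and without it the summability argument is incomplete. To summarize: apply BPS with $A=g_i$ (so the favorable singular-value gap is at $g_i$), bound that gap via condition~(3) \emph{from $g_0$} combined with the triangle inequality, bound $d_Y(g_i,g_{i+1})$ via condition~(1), and establish the linear lower bound $d_X(g_0,g_i)\ge d'i$ from condition~(2) and discreteness. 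Then the series converges and the ``$a>1$'' worry disappears.
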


\begin{defn}[sublinearly Morse boundary]\label{sublin M bdry}
    The \emph{sublinearly Morse boundary} of a discrete subgroup $\Gamma\subset G$ is:
    $$ \p_{SM,\theta}(\Gamma):= \{ \eta \in \mathcal{F}_{\theta} \text{ $|$ }\exists \{g_n\}_n \subset \Gamma \text{ s.t. } U_{\theta}(g_n) \rightarrow \eta \text{ and } \{g_n\}_n \text{ is $P_{\theta}$-sublinearly Morse} \}.$$
\end{defn}

We prove Theorem \ref{Cauchy in general} by proving the result for $\Phi(\Gamma) \subset GL(d,\R)$, and then draw the connection between the general case and linear case by the following lemma.

\begin{prop}\label{inv of sublin Morse}
    For a sequence $\{g_n\}$ in $G$, then for $\Phi$ defined in Proposition \ref{general to linear} the image $\{\Phi(g_n)\}$ is $P_1$-sublinearly Morse if and only if $\{g_n\}$ is $P_{\theta}$-sublinearly Morse.
\end{prop}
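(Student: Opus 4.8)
The strategy is to check each of the three defining conditions of $P_{\theta}$-sublinearly Morse for $\{g_n\}$ against the corresponding conditions for $\{\Phi(g_n)\}$, using the dictionary provided by Proposition \ref{general to linear}. The key translation tool is property (2) of that proposition: $\log\frac{\sigma_1}{\sigma_2}(\Phi(g)) = \min_{\alpha\in\theta}\alpha(\kappa(g))$ for all $g\in G$, which in particular identifies the single simple root $\alpha_1$ for $P_1$ in $SL(d,\R)$ with the minimum over $\theta$ of the roots for $P_\theta$. Alongside this I would record the metric comparison: since $\Phi$ is a fixed linear representation, pulling back the symmetric metric on $SL(d,\R)/SO(d,\R)$ along $\Phi$ gives a metric on $X$ equivalent to $d_X$ (this is exactly the argument used at the end of the proof of Theorem \ref{$P_k$ rep general}), so there is a constant $\lambda\geq 1$ with $\lambda^{-1} d_X(g,h) \leq d_{X'}(\Phi(g),\Phi(h)) \leq \lambda\, d_X(g,h)$, where $d_{X'}$ is the symmetric distance on $SL(d,\R)/SO(d,\R)$. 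A bi-Lipschitz change of the ambient metric preserves the class of sublinear functions (up to the rescaling $\eta(\lambda t)\leq \lambda\eta(t)$ from Remark \ref{convexity of kappa}) and the notion of a $(C,\eta)$ sublinear ray, so conditions (1) and (2) transfer in both directions with at most a change of the constant $C$ and of the implicit constants in the sublinear functions.

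\textbf{Condition (3).} This is where property (2) of Proposition \ref{general to linear} does the real work. For the linear sequence, condition (3) reads: there exist $a'>0$ and a sublinear $\eta'$ with
\[
\alpha_1\!\left(\kappa\!\left(\Phi(g_n)^{-1}\Phi(g_{n+i})\right)\right) \;=\; \log\frac{\sigma_1}{\sigma_2}\!\left(\Phi(g_n^{-1}g_{n+i})\right) \;\geq\; a'\cdot d_{X'}\!\left(\Phi(g_n),\Phi(g_{n+i})\right) - \eta'\!\left(d_{X'}(\Phi(g_0),\Phi(g_{n+i}))\right).
\]
By property (2) the left-hand side equals $\min_{\alpha\in\theta}\alpha(\kappa(g_n^{-1}g_{n+i}))$, and by the metric comparison the right-hand side is comparable to $a'\lambda^{-1} d_X(g_n,g_{n+i}) - \eta'(\lambda\, d_X(g_0,g_{n+i}))$. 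So the linear condition (3) with a uniform $a'$ and sublinear $\eta'$ is equivalent, after absorbing $\lambda$ into the constants, to: for all $\alpha\in\theta$, $\alpha(\kappa(g_n^{-1}g_{n+i})) \geq \min_{\alpha'\in\theta}\alpha'(\kappa(g_n^{-1}g_{n+i})) \geq a\cdot d_X(g_n,g_{n+i}) - \eta'(d_X(g_0,g_{n+i}))$ — which is exactly condition (3) for $\{g_n\}$. The reverse implication is immediate since if the bound holds for every $\alpha\in\theta$ it holds for the minimum. I would also note that condition (3) guarantees $\alpha(\kappa(g_n^{-1}g_{n+1}))$ is positive for large $n$, so that the flags $U_\theta(g_n)$ and $U_{1,d-1}(\Phi(g_n))$ are eventually well-defined and, by property (3) of Proposition \ref{general to linear}, satisfy $\xi_\Phi(U_\theta(g_n)) = U_{1,d-1}(\Phi(g_n))$ — this last point isn't needed for the equivalence itself but is what makes the proposition useful for deducing Theorem \ref{Cauchy in general}.

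\textbf{Main obstacle and bookkeeping.} The only genuinely delicate point is tracking that the sublinear functions stay sublinear and $O(\eta)$ under the various rescalings: replacing $d_X$ by the comparable $d_{X'}$ changes the argument of each $\eta$ by a bounded factor $\lambda$, and Remark \ref{convexity of kappa} tells us $\eta(\lambda t) \leq \lambda \eta(t)$ for $\lambda \geq 1$, so the rescaled function is still sublinear and still $O(\eta)$; likewise an additive constant can be absorbed since $\eta$ may be assumed eventually larger than any constant (or one enlarges $C$). A second small subtlety: condition (2) asks the concatenated path $\bigcup_i[g_iK,g_{i+1}K]$ to be a sublinear ray in $X$, whereas on the linear side one wants $\bigcup_i[\Phi(g_i)K',\Phi(g_{i+1})K']$ to be a sublinear ray in $SL(d,\R)/SO(d,\R)$. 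Since $\Phi$ need not send geodesic segments of $X$ to geodesic segments of the larger symmetric space, I would instead compare the two concatenated paths directly: each is a piecewise-geodesic through the same sequence of (images of) points, and the bi-Lipschitz bound on $d_X$ versus $d_{X'}$ forces the Hausdorff distance between corresponding segments $[g_iK,g_{i+1}K]$ and $[\Phi(g_i)K',\Phi(g_{i+1})K']$ to be at most a sublinear (in fact bounded-by-$\eta(d_X(g_0,g_i))$, using condition (1)) amount, so the "sublinear ray" property — which is a coarse, quasigeodesic-type condition — passes between them. I expect this comparison of piecewise-geodesic paths, rather than any single inequality, to be the part requiring the most care, though it is routine once one fixes the constant $\lambda$ and invokes the properties of concave sublinear functions from Remark \ref{convexity of kappa}.
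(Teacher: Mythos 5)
Your argument follows the same route as the paper: use property (2) of Proposition \ref{general to linear} to translate condition (3) between $\alpha_1(\kappa(\Phi(\cdot)))$ and $\min_{\alpha\in\theta}\alpha(\kappa(\cdot))$, and use the bi-Lipschitz comparison of the two symmetric-space metrics (established via the injectivity of $\Phi$ in the proof of Theorem \ref{$P_k$ rep general}) to transfer conditions (1), (2), and the distance terms in (3), absorbing the bi-Lipschitz constant into $C$, $a$, and the sublinear functions via Remark \ref{convexity of kappa}. The chain $\alpha(\kappa(g_n^{-1}g_{n+i})) \geq \min_{\alpha'\in\theta}\alpha'(\kappa(g_n^{-1}g_{n+i})) = \alpha_1(\kappa(\Phi(g_n^{-1}g_{n+i})))$ is exactly what the paper writes, and you correctly note that the minimum makes the two directions of the iff cost nothing.

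Where you go beyond the paper is condition (2). The paper dispatches it with the remark that conditions (1) and (2) are ``purely geometric in terms of the Riemannian distances,'' which silently elides the fact that the induced map $X \to SL(d,\R)/SO(d,\R)$ is bi-Lipschitz but not geodesic-preserving, so $\Phi([g_iK,g_{i+1}K])$ is not literally $[\Phi(g_i)K',\Phi(g_{i+1})K']$. You are right that this is the only genuinely delicate point, and your fix is the right one: both concatenated piecewise paths pass through the same sequence of vertices $\Phi(g_i)K'$, each bridge segment has length controlled by $\lambda\,\eta(d_X(g_0,g_i))$ by condition (1), so the two paths stay within a sublinear Hausdorff distance of each other, and the sublinear-ray property survives a sublinear Hausdorff perturbation together with the resulting bi-Lipschitz reparametrization. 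One small imprecision in your write-up: the Hausdorff comparison should be stated between $\Phi([g_iK,g_{i+1}K])$ and $[\Phi(g_i)K',\Phi(g_{i+1})K']$, both inside $SL(d,\R)/SO(d,\R)$ (your phrasing compares segments living in different spaces), and the relevant bound is simply that both segments lie in a ball of radius $O(\eta(d_X(g_0,g_i)))$ about the common endpoint, not a more refined quasi-geodesic estimate. With that rephrasing, your argument is sound, and in fact it is slightly more careful than the paper's.
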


\begin{proof}
    We need to show that the three conditions in Definition \ref{sublin. Morse} are satisfied for $\{g_n\}$ and $\{\Phi(g_n)\}$ simultaneously. By property (4) of Theorem \ref{$P_k$ rep general}, we can replace $\log \frac{\sigma_1}{\sigma_d}(\Phi(g_n))= :d_Y(\Phi(g_n),I_d)$ with $d_X(g_n,e)$ up to some multiplicative constants. Since conditions (1) and (2) for sublinear Morseness are purely geometric in terms of the Riemannian distances, and the sublinearity is unaffected by taking scalar multiples of the argument (due to the remark in Section \ref{sublin. ray}), they are satisfied for $\{g_n\}$ if and only if the same hold for $\{\Phi(g_n)\}$.

    With property (2) in Proposition \ref{general to linear}, condition (3) in Definition \ref{sublin. Morse} turns into $$\alpha (\kappa(g_0^{-1}g_n)) \geq \alpha_1 (\kappa(\Phi(g_0^{-1}g_n)))\geq a \cdot d_Y(\Phi(g_n),\Phi(g_0)) -\eta'(d_Y(\Phi(g_n),\Phi(g_0)))$$ for all $\alpha \in \theta$. Again, by interchanging $d_Y$ with $d_X$ up to the multiplicative constant (i.e. modifying $a$), we obtain the desired inequality for $\{g_n\}$  and $\{\Phi(g_n)\}$ simultaneously, and this concludes the proof.
\end{proof}

Now we can prove Theorem \ref{Cauchy in general}:
\begin{proof}[Proof of Theorem \ref{Cauchy in general}]
     $\{U_{\theta}(g_n)\}$ converges in $\mathcal{F}_{\theta}$ if and only if $\{U_{1,d-1}(\Phi(g_n))\}$ converges in $\mathcal{F}_{1,d-1}$. Thus, it suffices to show $\{U_{1,d-1}(\Phi(g_n))\}$ is Cauchy. 
    
    A matrix $M$ has a \emph{gap of index $k$} if $\sigma_k(M) > \sigma_{k+1}(M)$. For any $M$ with a gap of index $k$, the map $U_k(M) \in Gr_k(\R^d)$ is well-defined. Let $d_k$ be the natural angle metric defined on the Grassmannian $Gr_k(\R^{d})$.
    
    By Lemma A.4 in \cite{2016arXiv160501742B}, for $A, AB \in GL(d, \R)$ that have gap of index $k$, the distance $d_k(U_k(A), U_k(AB) )$ is bounded above by $\frac{\sigma_1}{\sigma_d}(B^{-1})\frac{\sigma_{k+1}}{\sigma_k}(A)$. When $n$ is large, $ \Phi(g_n)$ has a gap of index $1$ (and respectively $d-1$) by construction. Then assuming $n<m$ and omitting the representation $\Phi$ we have
    
    \begin{equation}\label{conv}
        \begin{split}
            d_1(U_1(g_n), U_1(g_m)) 
            &\leq\sum_{i=n}^{m-1}d_1\left(U_1(g_i), U_1(g_i(g_i)^{-1}g_{i+1})\right) \\ 
            &\leq \sum_{i=n}^{m-1} \exp (d_Y(g_{i+1}^{-1}g_i))\cdot \exp{\left(-\alpha_1(\kappa(g_i))\right)}\\
            &\leq \sum_{i=n}^{m-1} \exp(\eta(d_Y(g_0, g_i)))\cdot \exp(-\alpha_1(\kappa(g_0^{-1}g_i))+d_Y(g_0,e))\\
             \end{split}.         
    \end{equation}
Then $\{\Phi(g_n)\} $ being $P_1$-sublinearly Morse due to Proposition \ref{inv of sublin Morse} and the assumption that $\{g_n\}$ is $P_{\theta}$-sublinearly Morse implies
$$     d_1(U_1(g_n), U_1(g_m)) 
            \leq \sum_{i=n}^{m-1} C' \cdot \exp{\left(-a\cdot d_Y(g_i,g_0)+(\eta+\eta')(d_Y(g_i,g_0))\right)}$$
for $a, b $ the constants in the definition of the sublinearly Morse sequence. 

In order to see that the partial sum in Equation \eqref{conv} converges, we need to show the distance $d_Y(g_i,g_0)$ grows fast enough, more precisely, it has to grow linearly in terms of $i$. 

    \begin{claim}
        There exists a constant $d'$ such that $d_Y(g_i, g_0)\geq d'\cdot i$ when $i$ is large enough.
    \end{claim}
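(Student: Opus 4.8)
The engine for this lower bound is condition (2) of Definition~\ref{sublin. Morse}, carried over to the linear picture by Proposition~\ref{inv of sublin Morse}. Write $Y$ for the symmetric space $PSL(d,\R)/PSO(d,\R)$ and $h_n:=\Phi(g_n)$. By Proposition~\ref{inv of sublin Morse} the sequence $\{h_n\}$ is $P_1$-sublinearly Morse, so the path $c$ obtained by concatenating the geodesic segments $[h_n,h_{n+1}]$ in $Y$ is a $(C,\bar\eta)$ sublinear ray based at $h_0$, for some $C\ge 1$ and some sublinear $\bar\eta$ (we reuse the names for the $Y$-constants). Parametrizing $c$ by arc length, so that $c(t_i)=h_i$ with $t_i:=\sum_{j=0}^{i-1}d_Y(h_j,h_{j+1})$, Definition~\ref{sublin ray} applied to the parameter pair $(0,t_i)$ yields
\[
  d_Y(h_0,h_i)\;\ge\;\tfrac1C\,t_i-\bar\eta(t_i).
\]
Hence it is enough to show that the arc length $t_i$ grows at least linearly in $i$; the error term $\bar\eta(t_i)$ will then be absorbed by sublinearity.

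For the linear lower bound on $t_i$ the plan is to use discreteness. First we may assume the $g_n$, hence the $h_n$, are pairwise distinct: deleting consecutive repeats leaves the path $c$ unchanged, preserves the three conditions of Definition~\ref{sublin. Morse} (the distances and Cartan projections involved are those of the original sequence), and does not affect whether $\{U_1(h_n)\}$ is Cauchy, since it only removes consecutive duplicates of its terms. Now $\Gamma\subset G$ is discrete, so it acts properly discontinuously on $X$ and $\epsilon_0:=\inf\{d_X(e,\gamma):\gamma\in\Gamma\setminus\{e\}\}>0$; by the bi-Lipschitz equivalence of $d_X$ with the pullback under $\Phi$ of the Riemannian metric of $Y$, established in the proof of Theorem~\ref{$P_k$ rep general}, we likewise get $\epsilon_0':=\inf\{d_Y(o,\Phi(\gamma)\,o):\gamma\in\Gamma\setminus\{e\}\}>0$. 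Since $g_j\ne g_{j+1}$ we have $g_j^{-1}g_{j+1}\in\Gamma\setminus\{e\}$, whence $d_Y(h_j,h_{j+1})=d_Y(o,\Phi(g_j^{-1}g_{j+1})\,o)\ge\epsilon_0'$ for every $j$, and summing gives $t_i\ge\epsilon_0'\,i$.

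Combining the two estimates completes the proof: since $\bar\eta$ is sublinear there is $T_0$ with $\bar\eta(t)\le t/(2C)$ for $t\ge T_0$, and since $t_i\ge\epsilon_0' i\to\infty$ there is $I_0$ with $t_i\ge T_0$ for $i\ge I_0$; for such $i$,
\[
  d_Y(h_i,h_0)\;\ge\;\tfrac1C\,t_i-\bar\eta(t_i)\;\ge\;\tfrac1{2C}\,t_i\;\ge\;\tfrac{\epsilon_0'}{2C}\,i,
\]
so the claim holds with $d':=\epsilon_0'/(2C)$. The only step that is not routine bookkeeping is the uniform positive lower bound on the lengths of consecutive steps; this is precisely where proper discontinuity of the discrete group $\Gamma$ is used, together with the reduction to a consecutively distinct sequence. (For a sublinearly Morse sequence that does not lie in a discrete subgroup the arc length need not outgrow a prescribed function of the index, and the linear bound genuinely can fail, so some such geometric input is unavoidable.) I note finally that if condition (2) of Definition~\ref{sublin. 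Morse} is read with the concatenated path parametrized so that $c(i)=h_i$ --- one geodesic segment per unit interval, in the spirit of the discrete paths of Definition~\ref{KLP UR qg} --- the claim is immediate, since Definition~\ref{sublin ray} applied to $(0,i)$ already gives $d_Y(h_i,h_0)\ge i/C-\bar\eta(i)\ge i/(2C)$ for all large $i$.
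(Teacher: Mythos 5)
Your proposal is correct and follows essentially the same route as the paper: apply the sublinear‑ray lower bound of Definition~\ref{sublin ray} at the arclength parametrization of the concatenated path, then use discreteness of $\Gamma$ to bound the arclength $\sum_{j<i}d_Y(h_j,h_{j+1})$ below by a constant times $i$, and finally absorb $\bar\eta$ using sublinearity. The only point where you go beyond the paper is the explicit reduction to consecutively distinct terms, which the paper glosses over by writing $\inf_{g\neq h\in\Gamma}d_Y(g,h)$ without remarking that a consecutive repeat $g_j=g_{j+1}$ would contribute $0$; this small extra care is a genuine improvement in rigor but does not change the argument.
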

    \begin{claimproof}
         Because the concatenated path $\bigcup_{i} [g_i, g_{i+1}]$ parametrized by arclength is a sublinear ray by definition, we have $$d_Y(g_i,g_0)\geq \frac{1}{C}\left(\sum_{n=0}^{i-1}d_Y(g_n, g_{n+1})\right)-\bar{\eta}\left(\sum_{n=0}^{i-1}d_Y(g_n, g_{n+1})\right).$$ 
The quantity $\sum_{n=0}^{i-1}d_Y(g_n, g_{n+1})$ is increasing as $\Gamma$ is discrete: there exits a lower bound $d:= \inf_{g\neq h, \in \Gamma} d_Y(g, h)> 0$. For all $\epsilon \in (0,\frac{1}{C})$, there exists $i$ large enough, such that the sublinear function $\eta(t)$ is bounded above by $\eps(t)$ for all $t \geq d \cdot i$, hence \begin{equation*}
    \begin{split}
        d_Y(g_i, g_0)&\geq \left(\frac{1}{C}-\eps\right)\left(\sum_{n=0}^{i-1}d_Y(g_n, g_{n+1})\right)\\
        &\geq d'\cdot i,
    \end{split}
    \end{equation*} with $d' = \left(\frac{1}{C}-\eps\right)\cdot d.$
    \end{claimproof}
   
    The claim shows that the partial sum of the exponentials in the last line of Equation \eqref{conv} converges, and since $\eta$ and $\eta'$ are sublinear, they do not affect the convergence of the series as $n$ and $m$ tend to infinity.

Note that convergence of the quantity in Equation \eqref{conv} suffices to ensure the convergence of $\{U_{1,d-1}(\Phi(g_n))\}$ because the metric on $\mathcal{F}_{1,d-1}\subset Gr_1(\R^d)\bigoplus Gr_{d-1}(\R^d)$ is composed of the individual $d_i$, and hence we proved the proposition. 
\end{proof}

\section{Main Theorem}\label{main thm}

With all the background knowledge introduced, we can now state our main theorem. Let $G$ be as in Theorem \ref{$P_k$ rep general} and $X$ the symmetric space associated to $G$. 

\begin{thm} [Main Theorem]{\label{genericity}}

Let $\Gamma$ be a non-elementary, discrete, $P_{\theta}$-transverse, and $\phi-$divergent subgroup of G for some $\phi \in \mathfrak{a}_{\theta}^*$ with $\delta_{\phi}< \infty$. Let $\mu $ and $\Bar{\mu}$ be a $\phi$ and $\Bar{\phi}:=\iota^*(\phi)$-Patterson--Sullivan measure of dimension $\delta_{\phi}$ supported on $\Lambda_{\theta}(\Gamma) \subset \mathcal{F}_{\theta}$, and the induced Bowen--Margulis--Sullivan measure $\bar{m} $ on $\Gamma_0 \backslash S\Omega$ is finite. Then $\mu(\p_{SM,\theta}(\Gamma)) =1$ .
    
\end{thm}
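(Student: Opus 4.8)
The plan is to reduce the statement to an ergodic-theoretic genericity argument on the geodesic flow of the Hilbert geometry $(\Omega, d_H)$, paralleling the structure of the proof of Theorem \ref{zero measure in non Anosov} but with inequalities that are only sublinear rather than uniform. First I would invoke Theorem \ref{$P_k$ rep general} and Proposition \ref{inv of sublin Morse} to pass from $\Gamma \subset G$ to the projectively visible group $\Gamma_0 \subset \Aut(\Omega)$, so that it suffices to show that the set of $\xi(x)$ with $x \in \Lambda_\Omega(\Gamma_0)$ admitting a $P_1$-sublinearly Morse sequence $\{\rho^{-1}(\gamma_n)\}$ with $U_1 \to x$ has full $\xi_*\mu$-measure. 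Since $\bar m$ is finite and $\Gamma$ is $\phi$-divergent, Theorem \ref{ergodic in divergent case}(1) gives that the geodesic flow $\psi^t$ on $\Gamma_0 \backslash S_{\Gamma_0}\Omega$ is ergodic with respect to the (now finite, hence up to normalization probability) measure $\bar m$, and $\mu(\Lambda_\theta^{con}(\Gamma)) = 1$. Ergodicity with a finite measure upgrades to the statement that $\bar m$-a.e. forward orbit is both dense \emph{and} equidistributed in $\supp(\bar m)$, and moreover the Birkhoff sums of any $L^1$ observable along $\bar m$-a.e. orbit grow sublinearly in their fluctuations — this is the key input that will let me extract condition (3) of Definition \ref{sublin. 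Morse}.

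Next I would fix a base point $o \in \Omega$, and for $\bar m$-a.e. point of $S_{\Gamma_0}\Omega$ with footpoint near $o$, consider its forward endpoint $x \in \Lambda_\Omega(\Gamma_0)$; the straight geodesic ray $r = [o, x)$ projects to an equidistributed flow orbit. I would then build the candidate sequence $\{h_n\} \subset \Gamma_0$ by a shadowing/closing-type construction: because the orbit is dense in $\supp(\bar m)$, for each $n$ I can find $\gamma_n \in \Gamma_0$ with $\gamma_n.o$ within bounded Hausdorff distance of $r$ at flow-time roughly $n$ (using Fact \ref{bounds on Haus dist} to compare $[o,x)$ with $[\gamma_n^{-1}.o, \gamma_n^{-1}.x')$ for nearby $x'$). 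Conditions (1) and (2) of Definition \ref{sublin. Morse} — the sublinear spacing of consecutive $g_n$'s and the sublinear-ray property of the concatenation — follow from the boundedness of the tracking neighborhood together with Fact \ref{Hilbert dist and singular value}: points at bounded $d_H$-distance from a geodesic with endpoint on $\partial\Omega$ have singular-value gap $\log(\sigma_1/\sigma_n)$ growing linearly, and the fellow-traveling is coarsely controlled, which one massages into a $(C,\bar\eta)$ sublinear ray. For condition (3), the point $\alpha_1(\kappa(\rho(h_n^{-1}h_{n+i}))) = \log(\sigma_1/\sigma_2)(h_n^{-1}h_{n+i})$ must dominate $a\, d_H(h_n.o, h_{n+i}.o)$ up to a sublinear error; here I would use that the flow orbit spends an asymptotically negligible (sublinear, by the equidistribution / ergodic-average control) amount of time in the region of $\supp(\bar m)$ where the "regularity" observable — morally the logarithmic derivative of $\sigma_1/\sigma_2$ along the flow, or equivalently a Busemann/Iwasawa-type cocycle defect — is small, so the deficit accumulated is $o(n)$. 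This is where the genericity genuinely enters: the set of directions along which the Cartan projection fails to stay coarsely away from the walls is the non-recurrent part, which has measure zero, and its sublinear-time visitation is what produces the function $\eta'$.

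The main obstacle I expect is exactly this quantitative passage from "measure-zero bad set" to "sublinearly small Birkhoff sum of the bad indicator along a.e. orbit" — plain ergodicity gives only that the \emph{average} time in the bad set is zero, i.e. $\frac1T \int_0^T \mathbf 1_{bad}(\psi^t w)\,dt \to 0$, which by itself yields $o(n)$ total bad time and hence a sublinear error, but one has to be careful that the bad set is genuinely the one governing the $\alpha_1$-gap and that the error is additive in the right way along the concatenated path; controlling the \emph{shape} of $\eta'$ (that it is $O(\eta)$, as Definition \ref{sublin. Morse} demands, rather than merely sublinear) may require choosing the $h_n$ adaptively so that the geometric spacing function $\eta$ dominates, or absorbing constants via the concavity normalization in the Remark after Definition \ref{sublin ray}. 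A secondary technical point is checking that the almost-everywhere statement on $S_{\Gamma_0}\Omega$ transfers to a $\mu$-almost-everywhere statement on $\Lambda_\theta(\Gamma)$: this uses the Hopf parametrization and the product structure $dm = e^{-\delta_\phi \phi([\cdot,\cdot]_\theta)} d\bar\mu \otimes d\mu \otimes ds$, so that a $\bar m$-conull set of forward endpoints is $\mu$-conull, together with $\xi$ being a homeomorphism $\Lambda_\Omega(\Gamma_0) \to \Lambda_\theta(\Gamma)$ — and finally observing $\p_{SM,\theta}(\Gamma) \subseteq \Lambda_\theta(\Gamma)$ has the reverse trivial inequality, giving $\mu(\p_{SM,\theta}(\Gamma)) = 1$.
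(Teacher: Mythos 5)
Your high-level skeleton is right: pass to $\Gamma_0 \subset \Aut(\Omega)$ via Theorem \ref{$P_k$ rep general}, invoke ergodicity of the geodesic flow from Theorem \ref{ergodic in divergent case} (using finiteness of $\bar m$ and $\phi$-divergence), and then shadow a.e.\ geodesic ray by a group sequence; and your ``secondary'' remark about transferring a $\bar m$-conull statement to a $\mu$-conull statement via the product structure of $\bar m$ is exactly how the paper closes the argument. But the middle of your proposal — both the construction of the sequence and the mechanism for condition (3) — departs from what the paper actually does, in a way that, as written, leaves a real gap.

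First, your sequence construction is wrong in shape. You propose to ``find $\gamma_n \in \Gamma_0$ with $\gamma_n.o$ within bounded Hausdorff distance of $r$ at flow-time roughly $n$'' for every $n$, i.e.\ a uniformly-spaced sampling of the geodesic. But the geodesic ray $r$ is generally \emph{not} within bounded distance of $\Gamma_0.o$ at every time $t$; a generic orbit may (and for non-Anosov $\Gamma$ must, by Theorem \ref{zero measure in non Anosov}) drift far from the orbit on excursions of unbounded length. The paper instead records only the \emph{return times}: it defines $A_\gamma = \{g : \gamma \cap B_r(g.o) \neq \emptyset\}$ and a $C$-separated subsequence $A_{\gamma,C}$ (Definition \ref{sequence}), and the ergodic input is exactly the positive-density statement $\lim_T \mathrm{Leb}(Cpct_\gamma(T))/T = M > 0$ (Theorem \ref{equiv. condition}). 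The sublinearity of the gaps between consecutive $h_i$ is then a consequence of this positive density (Proposition \ref{(1)}), not of a bounded shadowing window.

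Second, and more seriously, your mechanism for condition (3) is the wrong picture. You argue that ``the flow orbit spends an asymptotically negligible (sublinear) amount of time in the region where the regularity observable is small, so the deficit is $o(n)$.'' The bad region here (time away from $\Gamma_0.\overline{B_r(o)}$) in fact has density $1-M$, which is typically a \emph{positive fraction} of total time, not $o(T)$; your sublinear-bad-time heuristic therefore has no hope of applying directly. Moreover, even morally, $\alpha(\kappa(h_i^{-1}h_{i+j}))$ is not a Birkhoff sum of a ``logarithmic derivative of $\sigma_1/\sigma_2$'' observable — the Cartan projection of a product is emphatically not additive, and without a coarse-additivity input you cannot convert time-averaged control of an observable into a lower bound on the Cartan projection of the group element. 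The paper's crucial and unmentioned-by-you ingredient is Lemma \ref{sing value gap} (the ``no backtracking'' singular-value inequality from \cite{2022arXiv220104859C}), which gives $\alpha(\kappa(h)) \geq \alpha(\kappa(g)) + \alpha(\kappa(g^{-1}h)) - H$ whenever $g.o$ is near $[o, h.o]$. This applies iteratively along the $C$-separated return sequence $\{h_i\}$ (using Fact \ref{bounds on Haus dist} to verify proximity), and by choosing $C$ large enough each step gains at least $1$, so $\alpha(\kappa(h_i^{-1}h_{i+j})) \geq j$; then positive density converts $j$ linearly into distance. This is the substance of Proposition \ref{(2)}, and without identifying Lemma \ref{sing value gap} (or an equivalent substitute), your proposal does not actually establish condition (3). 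You do flag this as the main obstacle, which is the right instinct, but the fix you sketch — Birkhoff control of a bad-set indicator — is the wrong tool; what is needed is a coarse subadditivity/no-backtracking estimate for the Cartan projection in the Hilbert geometry.
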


For $\Gamma$ as in the Main Theorem, let us fix $\rho$, $\Omega \subset \P(\R^D),$ and $\Gamma_0\subset Aut(\Omega)$ as in Theorem \ref{$P_k$ rep general}. Again we omit the representation $\rho $ when the context is clear.

\subsection{Equivalent Condition of sublinearly Morseness}

Given the identification between $\Lambda_{\Omega}(\Gamma_0)$ and $\Lambda_{\theta}(\Gamma)$ mentioned in Section \ref{transverse rep}, we can find an equivalent condition of Definition ~\ref{sublin. Morse} for $P_{\theta}$-sublinearly Morse sequences. In order to do this, we need to define several things first. Let $\mathcal{C}_{\Gamma_0}$ denote the convex hull associated to $\Lambda_{\Omega}(\Gamma_0)$. Fix a base point $o \in \mathcal{C}_{\Gamma_0} \subset\Omega$.

\begin{defn}[Compact part of $\gamma$]{\label{cpct part}}
     Let $\gamma:[0, \infty) \rightarrow \Omega$ be a straight geodesic ray. Fix $r>0$ and let $\Gamma_0.B_r(o)$ denote the set of metric balls with radius $r$ around the orbit of $o$ under the action of $\Gamma_0$. Then we can define the \emph{compact part of $\gamma $} to be $$Cpct_{\gamma}(T):= \{ t \in [0,T] \text{ } |\text{ }  \gamma (t) \in \Gamma_0.B_r(o)\}.$$
\end{defn}

\begin{thm}{\label{equiv. condition}}
    If 
\begin{equation}{\label{1}}
    \lim_{T\to \infty} \frac{Leb(Cpct_{\gamma}(T))}{T}= M> 0,
\end{equation}
then $$\gamma(\infty):=\lim_{t\rightarrow\infty} \gamma(t) \in \xi^{-1}(\p_{SM,\theta}(\Gamma)).$$

\end{thm}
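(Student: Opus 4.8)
The plan is to show that the condition \eqref{1}, that the geodesic ray $\gamma$ spends a definite (linear) proportion of its time in the orbit $\Gamma_0.B_r(o)$, forces the existence of a discrete sequence $\{g_n\}\subset\Gamma_0$ whose orbit fellow-travels $\gamma$ sublinearly and which satisfies the three conditions of Definition \ref{sublin. Morse}. First I would set up the combinatorial data: parametrize $\gamma$ by arclength starting at $o=\gamma(0)$, and for each time $t$ with $\gamma(t)\in\Gamma_0.B_r(o)$ pick a group element $h_t\in\Gamma_0$ with $d_H(\gamma(t),h_t.o)\le r$. Using \eqref{1}, I would extract an increasing sequence of times $t_0<t_1<t_2<\cdots$ in $Cpct_\gamma$ that is ``syndetic within the compact part'': the gaps $t_{n+1}-t_n$ are bounded below by the discreteness of $\Gamma_0$ (two distinct orbit points are $\ge d_0$ apart for some $d_0>0$) and bounded above in an averaged sense because the density $M>0$. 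Setting $g_n := h_{t_n}$, the orbit points $g_n.o$ lie within $r$ of $\gamma(t_n)$, so they track the ray $\gamma$.

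Next I would verify the three conditions. Condition (1): since $g_n.o$ and $g_{n+1}.o$ are $r$-close to $\gamma(t_n)$ and $\gamma(t_{n+1})$ and $\gamma$ is a geodesic, $d_H(g_n.o,g_{n+1}.o)\le (t_{n+1}-t_n)+2r$; I would need this to be $\le\eta(d_H(o,g_n.o))$ for a sublinear $\eta$. This is where the density hypothesis does real work: if the gaps $t_{n+1}-t_n$ were bounded by a constant we would be fine (even the Anosov/quasigeodesic case), but in general I would choose the $t_n$ more cleverly — take $t_n$ to be the $n$-th point of $Cpct_\gamma$ that is ``isolated enough,'' and control the jump sizes by an argument showing that the set of times where $\gamma$ leaves and re-enters $\Gamma_0.B_r(o)$ with a long excursion has sublinearly-growing total count, or alternatively refine the sequence so consecutive chosen times differ by a bounded amount on a density-one subset and only sublinearly often by more. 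Condition (2): the concatenation $\bigcup_i[g_i.o,g_{i+1}.o]$ must be a $(C,\bar\eta)$ sublinear ray; here I would invoke Fact \ref{bounds on Haus dist} to compare this concatenation with $\gamma|_{[0,\infty)}$ — each segment $[g_i.o,g_{i+1}.o]$ has endpoints within $r$ of points on $\gamma$, so by Fact \ref{bounds on Haus dist}(1) the Hausdorff distance between the concatenation and the corresponding sub-segment of $\gamma$ is $\le r$, and a genuine geodesic ray perturbed by a bounded Hausdorff amount is a $(C,\bar\eta)$ sublinear ray (in fact a quasigeodesic) after the sublinear reparametrization absorbs the jump defects from condition (1). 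Condition (3): I must show $\alpha_1(\kappa(g_n^{-1}g_{n+i}))\ge a\, d_H(g_n.o,g_{n+i}.o)-\eta'(\cdots)$, i.e. the Cartan projection stays uniformly off the wall $\{\sigma_1=\sigma_2\}$; by property (2) of Theorem \ref{$P_k$ rep general} and Fact \ref{est. Hilbert distance} this reduces to showing $d_H(g_n^{-1}g_{n+i}.o,o)$ is comparable to $\frac12\log(\sigma_1/\sigma_n)$, which in the Hilbert-geometry setting follows because $g_n^{-1}g_{n+i}.o$ and $o$ both lie near a straight geodesic in $\Omega$ (a translate of $\gamma$) — translating by $g_n^{-1}$ moves $\gamma$ to another straight bi-infinite geodesic through a point near $o$, and being near a straight geodesic forces the singular value gap to be large; quantitatively this is the ``regularity of geodesics in convex projective geometry'' statement, which I would extract from projective visibility of $\Gamma_0$ together with Fact \ref{est. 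Hilbert distance}.

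The main obstacle I anticipate is precisely in reconciling condition (1)'s jump-size control with the worst-case behavior allowed by only assuming \emph{positive} density $M$ rather than density one. If $\gamma$ alternates between long excursions outside $\Gamma_0.B_r(o)$ and short stays inside, the naive choice $g_n=h_{t_n}$ produces consecutive jumps of size comparable to the excursion lengths, which need not be sublinear in the distance traveled. Resolving this requires a careful combinatorial lemma: one must show that one can \emph{thin out} and \emph{re-index} the compact-part times so that, writing $s_n:=d_H(o,g_n.o)$, the increments $d_H(g_n.o,g_{n+1}.o)$ are $o(s_n)$ — intuitively, long excursions can happen but only so often (their cumulative length is $\le (1-M+\epsilon)T$, hence a single excursion of length $\ell$ occurring near ``time'' $T$ happens at most $\sim T/\ell$ times, giving sublinear frequency for each fixed scale and a diagonal argument across scales produces the sublinear function $\eta$). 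Once this combinatorial sublinearity lemma is in hand, conditions (2) and (3) follow from the geometric facts (\ref{bounds on Haus dist}), (\ref{est. Hilbert distance}), projective visibility, and properties (2),(4) of Theorem \ref{$P_k$ rep general} relatively mechanically, and then $\{g_n\}$ is $P_{\theta}$-sublinearly Morse by Proposition \ref{inv of sublin Morse} (working through $\Phi$ if needed), so $U_\theta(g_n)\to$ some $\eta\in\p_{SM,\theta}(\Gamma)$ by Theorem \ref{Cauchy in general}, and this limit must equal $\xi(\gamma(\infty))$ because $g_n.o\to\gamma(\infty)$ in $\overline\Omega$ and $\xi$ is the limit map — giving $\gamma(\infty)\in\xi^{-1}(\p_{SM,\theta}(\Gamma))$ as claimed.
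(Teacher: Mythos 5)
Your high-level strategy matches the paper's: construct the sequence from orbit elements $g$ with $\gamma\cap B_r(g.o)\neq\emptyset$, thin to a separated subset, and verify the three conditions of Definition \ref{sublin. Morse}. But there are two genuine gaps in the sketch.

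First, the jump-size bound. You correctly identify that controlling the increments $d_H(g_n.o,g_{n+1}.o)$ is the crux, but the mechanism you propose — counting excursions of length $\ge\ell$ ($\lesssim T/\ell$ of them), declaring the frequency sublinear ``at each scale,'' and invoking a diagonal argument — does not obviously produce what condition (1) actually demands, namely a bound $d_H(g_n.o,g_{n+1}.o)\le\eta\bigl(d_H(o,g_n.o)\bigr)$ holding for \emph{every} $n$. A count of how many big excursions occur is a weaker statement than a pointwise bound on each excursion, and the diagonalization is not spelled out. The paper's argument is shorter and direct: between consecutive elements $g_i,g_{i+1}\in A_\gamma$ the measure $Leb(Cpct_\gamma)$ grows by at most $2r$, so
\[
(M-\epsilon)\,t_{i+1}\ \le\ Leb(Cpct_\gamma(t_{i+1}))\ \le\ Leb(Cpct_\gamma(t_i))+2r\ \le\ (M+\epsilon)\,t_i+2r ,
\]
hence $M(t_{i+1}-t_i)\le 2\epsilon\, t_{i+1}+2r$, and since $\epsilon$ is arbitrary this gives the sublinear bound on $t_{i+1}-t_i$ (and hence on the orbit increment) directly. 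Your heuristic is gesturing at the same phenomenon — a long excursion would drag the density down — but the clean way to exploit the hypothesis is exactly this two-sided estimate at $t_i$ and $t_{i+1}$, not a frequency count.

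Second, and more seriously, the argument for condition (3) conflates the two different singular-value ratios. What must be bounded below is $\alpha_1(\kappa(g_n^{-1}g_{n+i}))=\log\frac{\sigma_1}{\sigma_2}$, i.e.\ a \emph{regularity} statement. Fact \ref{est. Hilbert distance} only controls $\log\frac{\sigma_1}{\sigma_D}$ in terms of $d_H$ — that is automatic for any element of $Aut(\Omega)$ and says nothing about $\sigma_1/\sigma_2$. ``Being near a straight geodesic'' does not by itself force the $\sigma_1/\sigma_2$ gap; that requires $P_1$-divergence of $\Gamma_0$, and more importantly a \emph{quantitative, additive} version of it. The paper gets this from Lemma \ref{sing value gap} (Canary--Zhang--Zimmer's Lemma 6.6): when $g.o$ lies within a bounded distance of $[o,h.o]$, one has $\alpha(\kappa(h))\ge\alpha(\kappa(g))+\alpha(\kappa(g^{-1}h))-H$. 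Chaining this along the $C$-separated sequence (with $C$ chosen large enough so each link contributes $>H+1$) yields $\alpha(\kappa(h_i^{-1}h_{i+j}))\ge j$. Then — and this is a second, independent use of the density hypothesis that your sketch omits entirely — one must convert the combinatorial length $j$ into the metric quantity $d_H(h_i.o,h_{i+j}.o)$ up to a sublinear error, which again uses $Leb(Cpct_\gamma(t_{n_{i+j}}))-Leb(Cpct_\gamma(t_{n_i}))\le 2r(F+1)j$ together with the density limit. Without both the chaining lemma and this second density estimate, condition (3) is not established.
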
 

Fix $\gamma$ that satisfies Equation \eqref{1}. We prove the theorem by constructing a $P_{\theta}$-sublinearly Morse sequence that converges to $\xi(\gamma(\infty))$ explicitly.

\begin{defn}{\label{sequence}}
Define:
    \begin{enumerate}
        \item $A_{\gamma}:= \{g \in \Gamma_0 \text{ } | \text{ } \gamma \cap B_r(g.o) \neq \emptyset \}$;

        \item Let $A_{\gamma, C} \subseteq A_{\gamma}$ be a maximal subset such that for all $ g, h \in A_{\gamma, C} $, $d_H(g.o,h.o) \geq C$.
    \end{enumerate}
    
\end{defn}

\begin{rmk*}

    For all $x \in \Omega$, let $$\pi_{\gamma}(x):=\{\gamma(t) \text{ }| \text{ }d_H(x, \gamma)= d_H(x, \gamma(t))\}$$ be the \emph{nearest point projection}. Let $t_x:= \min_{\gamma(t) \in \pi_{\gamma}(x)} t$. We can put an order on $A_{\gamma}$ (and $A_{\gamma,C}$ respectively) as follow: for all $g, h \in A_{\gamma}$, if $t_g:=t_{g.o} \neq t_h$, then $g<h$ if and only if $t_g <t_h$; if $t_g = t_h=t'$, then put any order on the set $S_{t'}:=\{g \in \Gamma_0 \text{ } |\text{ }t_g = t'\}$. This is turns $A_{\gamma}$ and $A_{\gamma,C}$ into totally ordered set, as each $S_{t'}$ is finite because $\Omega$ is proper and $\Gamma_0$ is discrete and acts properly discontinuously on $\Omega$. With this ordering of $A_{\gamma}$ and $A_{\gamma,C}$, we can index the sets by $A_{\gamma}=\{g_n\}$ and $A_{\gamma,C}=\{g_{n_i}\}_i$ as a subsequence.

   Moreover, we can also describe the maximality of $A_{\gamma, C}$ explicitly. Fix $g_{n,0}=g_0 \in A_{\gamma}$ to be the element such that $t_{g_0} = \min_{g \in A_{\gamma}} t_g$. For all $   i \in \N$, define $g_{n_{i+1}}$ to be the smallest element in $A_{\gamma} $ after $g_{n_i}$ with $d_H(g_{n_i}.o, g_{n_{i+1}}.o) \geq C$. We will show that $\{\rho(g_{n_i})\}_i$ is $P_{\theta}$-sublinearly Morse.

   Without loss of generality, we can also assume that $t_{g_0}\leq r$, i.e. the geodesic ray starts within $B_r(g_0.o)$. This can be seen by truncating the initial segment of the ray that is not in $B_r(g_0.o)$, and denoting the new geodesic ray by $\bar{\gamma}$. By construction, $Cpct_{\gamma}(T)= Cpct_{\bar{\gamma}}(T-t_{g_0}+r)$, then we have $$\lim_{T\rightarrow \infty} \frac{Leb(Cpct_{\gamma}(T))}{T}\leq \lim_{T\rightarrow \infty} \frac{Cpct_{\bar{\gamma}}(T-t_{g_0}+r)}{T-t_{g_0}+r} = \lim_{T\rightarrow \infty} \frac{Cpct_{\bar{\gamma}}(T)}{T}. $$ The above inequality shows that truncating the geodesic ray does not change the positivity of the limit in Equation \eqref{1}.
\end{rmk*}

We break the proof of Theorem \ref{equiv. condition} into three parts.

\begin{prop}{\label{(1)}}
    There exists a sublinear function $\eta$ such that the sequence $\{g_n\}$ satisfies the equation 
    \begin{equation}{\label{5}}
   d_X(g_i,g_{i+1})\leq \eta\left(d_X(g_i,g_0)\right). 
\end{equation}
\end{prop}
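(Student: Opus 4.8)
The plan is to transport the statement to the Hilbert metric on $\Omega$, where the geometry of $\gamma$ lives, and then feed in hypothesis \eqref{1}. By Theorem~\ref{$P_k$ rep general}(4), $d_X(\rho(g),e)$ and $d_H(g.o,o)$ agree up to multiplicative and additive constants; since $d_X$ is $G$-invariant and $\rho$ a homomorphism (and $\Gamma_0$ acts on $(\Omega,d_H)$ by isometries), this gives $d_X(g_i,g_{i+1})\asymp d_H(g_i.o,g_{i+1}.o)$ and $d_X(g_i,g_0)\asymp d_H(g_i.o,g_0.o)$. A sublinear function remains sublinear after pre- and post-composition with an affine map and may be replaced by a nondecreasing concave majorant (Remark~\ref{convexity of kappa}), so it suffices to produce a sublinear $\eta_0$ with $d_H(g_i.o,g_{i+1}.o)\le\eta_0\big(d_H(g_i.o,g_0.o)\big)$ for all $i$.

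Next I would set up the projection-time bookkeeping. Write $t_i:=t_{g_i}$ for the (first) nearest-point projection time of $g_i.o$ onto $\gamma$; by the chosen order on $A_\gamma$ the sequence $(t_i)$ is nondecreasing, and $\gamma(t_i)\in B_r(g_i.o)$ since $g_i\in A_\gamma$. Triangle inequalities then yield $|d_H(g_i.o,g_{i+1}.o)-(t_{i+1}-t_i)|\le 2r$ and, using the normalization $t_{g_0}\le r$ (so $d_H(g_0.o,\gamma(0))<r$), $|d_H(g_i.o,g_0.o)-t_i|\le 2r$. Proper discontinuity of the $\Gamma_0$-action forces only finitely many elements of $A_\gamma$ to have projection time below any bound, so $t_i\to\infty$ and hence $d_H(g_i.o,g_0.o)\to\infty$. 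The geometric heart of the argument is the observation that if $t\in(t_i+2r,\,t_{i+1}-2r)$ then $\gamma(t)\notin\Gamma_0.B_r(o)$: if $\gamma(t)\in B_r(g.o)$ then $g\in A_\gamma$, its projection time obeys $|t_g-t|<2r$, hence $t_i<t_g<t_{i+1}$, contradicting that $g_i,g_{i+1}$ are consecutive in $A_\gamma$ (when $t_i=t_{i+1}$ the open interval is empty and there is nothing to check). Consequently the compact part accrues at most $4r$ of measure between consecutive projection times:
\begin{equation*}
    Leb\big(Cpct_{\gamma}(t_{i+1})\big)\le Leb\big(Cpct_{\gamma}(t_i)\big)+4r .
\end{equation*}

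The crux is to upgrade this to $\dfrac{t_{i+1}-t_i}{t_i}\to 0$, and here it is essential that \eqref{1} asserts the existence of the limit, not merely positivity of the lower density. Suppose the ratio does not tend to $0$: there are $c>0$ and $i_k\to\infty$ with $t_{i_k+1}\ge(1+c)\,t_{i_k}$. Then the displayed inequality gives
\begin{equation*}
    \frac{Leb(Cpct_{\gamma}(t_{i_k+1}))}{t_{i_k+1}}\le\frac{Leb(Cpct_{\gamma}(t_{i_k}))+4r}{(1+c)\,t_{i_k}}=\frac{1}{1+c}\cdot\frac{Leb(Cpct_{\gamma}(t_{i_k}))}{t_{i_k}}+\frac{4r}{(1+c)\,t_{i_k}} .
\end{equation*}
Since $t_{i_k},t_{i_k+1}\to\infty$ and the density $Leb(Cpct_{\gamma}(T))/T$ tends to $M$ by \eqref{1}, letting $k\to\infty$ sends the left side to $M$ and the right side to $M/(1+c)$, forcing $M\le M/(1+c)$, which contradicts $M>0$ and $c>0$.

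Finally, combining $t_i\to\infty$ with $(t_{i+1}-t_i)/t_i\to 0$ and the two triangle estimates gives $d_H(g_i.o,g_{i+1}.o)/d_H(g_i.o,g_0.o)\to 0$. Then I would set $\eta_0(s):=\sup\{d_H(g_i.o,g_{i+1}.o):d_H(g_i.o,g_0.o)\le s\}$; this is finite for each $s$ by proper discontinuity, nondecreasing, and satisfies $\eta_0(s)/s\to 0$, since for every $\varepsilon>0$ all but finitely many indices have $d_H(g_i.o,g_{i+1}.o)\le\varepsilon\,d_H(g_i.o,g_0.o)$ while the finitely many exceptional terms contribute only a bounded additive amount. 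Passing to a nondecreasing concave majorant of $\eta_0$ and absorbing the constants from the first paragraph produces the desired sublinear $\eta$. I expect the third paragraph to contain the only genuine idea; the rest is bookkeeping, the one delicate point being the handling of ties among projection times when verifying that no element of $A_\gamma$ lies strictly between two consecutive ones.
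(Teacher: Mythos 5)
Your proof is correct, and it takes a genuinely different route from the paper after the common opening (transporting to the Hilbert metric, setting up projection times, and establishing the bounded increment $Leb(Cpct_{\gamma}(t_{i+1}))\le Leb(Cpct_{\gamma}(t_i))+O(r)$). The paper proceeds \emph{directly}: from the density estimates it extracts $(t_{i+1}-t_i)\le\epsilon' t_{i+1}+r'$, packages this as a sublinear bound $d_H(g_i.o,g_{i+1}.o)\le\eta_0\big(d_H(g_0.o,g_{i+1}.o)\big)$, and then invokes Fact~\ref{relating eta} (Proposition~3.2 of \cite{2019arXiv190902096Q}) to switch the argument from $g_{i+1}$ to $g_i$ before passing to $d_X$. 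You instead argue \emph{by contradiction} that $(t_{i+1}-t_i)/t_i\to 0$, which together with $|d_H(g_i.o,g_0.o)-t_i|=O(r)$ and $|d_H(g_i.o,g_{i+1}.o)-(t_{i+1}-t_i)|=O(r)$ yields $d_H(g_i.o,g_{i+1}.o)/d_H(g_i.o,g_0.o)\to 0$ directly, and you then construct the sublinear function as the supremum $\eta_0(s)=\sup\{d_H(g_i.o,g_{i+1}.o):d_H(g_i.o,g_0.o)\le s\}$. This buys you two things: the argument of $\eta_0$ is $d_H(g_i.o,g_0.o)$ from the start, so Fact~\ref{relating eta} is never needed; and the contradiction step makes visible exactly where the full two-sided convergence in Equation~\eqref{1} is used (feeding the subsequence $t_{i_k},t_{i_k+1}\to\infty$ into the limit from both sides), whereas the paper's write-up uses the same convergence but somewhat more implicitly through the $(M\pm\epsilon)$ sandwich. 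The cost is a slightly more abstract construction of $\eta_0$, but the proper-discontinuity argument for its finiteness and the verification of sublinearity via the finitely-many-exceptions device are both sound. Your remark about ties in projection times is well taken and handled correctly by the order convention on $A_{\gamma}$; the constants ($4r$ versus the paper's $2r$, and the $O(r)$ slack in $|d_H(g_i.o,g_0.o)-t_i|$) differ from the paper's but are immaterial.
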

 \begin{proof}
        Let $t_{ i}:= t_{g_i}$ as defined above. By construction we have $$Leb(Cpct_{\gamma}(t_{i+1}) )= Leb(Cpct_{\gamma}(t_{i}) )+Leb(\{ t \in [t_{ i}, t_{ i+1}] \text{ }|\text{ } \gamma(t) \cap A_{\gamma} . B_r(o) \neq \emptyset\}).$$ Notice that the second term on the right hand side is at most $2r$.

Because the limit in (\ref{1}) goes to $M$, which is strictly bigger than 0, then for all $\epsilon > 0$, we can find $i$ big enough such that $$Leb(Cpct_{\gamma}(t_{i+1}) ) \geq (m-\epsilon)t_{i+1}$$
and $$Leb(Cpct_{\gamma}(t_{i}) )\leq (M+\epsilon)t_{i}.$$
This implies $$(M-\epsilon)t_{ i+1} \leq  (M+\epsilon)t_{ i} +2r.$$
Hence
        $$M(t_{ i+1}-t_{ i})  \leq \epsilon(t_{ i+1} +t_{ i}) + 2r 
        \leq 2\epsilon t_{ i+1}+ 2r$$
and 
$$ (t_{ i+1}-t_{ i}) \leq \epsilon' t_{ i+1} + r'$$ with $\epsilon' = \frac{2 \epsilon}{M}$ and $ r' = \frac{2r}{M}$. Now, $\epsilon$ can be chosen arbitrarily small, then $\eta_0(t_{ i+1}) = \epsilon' t_{ i+1} + r'+2r$ is a sublinear function in $t_{ i}$, and we have
\begin{equation}{\label{2}}
     d_H(g_{ i}.o, g_{ i+1}.o) \leq (t_{ i+1}-t_{ i}) + 2r \leq \eta_0(t_{ i+1}). 
\end{equation}

 Using triangle inequalities, we have $$t_{ i+1}  \leq d_H(g_0.o, g_{ i+1}.o) + 2r .$$ Along with the monotonicity and convexity of the sublinear function, Equation \eqref{2} turns into $$d_H(g_{ i}.o, g_{ i+1}.o) \leq \eta_0 (d_H(g_0.o, g_{ i+1}.o) +2r). $$

 One simplification is that any constant added to the argument or of $\eta $ or the function itself would not affect its sublinearity, i.e. we can find another sublinear function that bounds it above. Denote this new function by $\eta_0$ again, and we have 
\begin{equation}{\label{2'}}
 d_H(g_{ i}.o, g_{ i+1}.o) \leq \eta_0(d_H(g_0.o, g_{ i+1}.o)). \tag{\ref{2}*}
\end{equation}

Note that this is close in forms to the relations we want to have in the first condition of the $P_{\theta} $-sublinearly Morseness, but we still need to find ways to related $d_H(g_0 .o, g_{i+1}.o)$ with $d_H(g_0. o, g_i.o)$. To do this we need the following fact:

\begin{fact}\cite[Proposition 3.2]{2019arXiv190902096Q}{\label{relating eta}}
There exists $d_1, d_2 >0$, depending on $\eta$ such that for all $   x, y \in \Omega$, 
$$d_H(x,y) \leq \eta(d_H(x,o)) \Rightarrow d_1\eta(d_H(y,o)) \leq \eta(d_H(x,o)) \leq d_2 \eta(d_H(y,o)).$$
    
\end{fact}

We can then rewrite Equation \eqref{2'} by letting $x= g_0^{-1}g_{i+1}.o$ and $y= g_0^{-1}g_i.o$
\begin{equation}{\label{4}}
    d_H(g_{ i}.o, g_{ i+1}.o) \leq  d_2 \Bar{\eta}(d_H(g_{ i}.o, g_0.o)) .
\end{equation}
Then, by Fact \ref{est. Hilbert distance}
\begin{equation*}
    \begin{split}
     \log \frac{\sigma_1}{\sigma_D}(g_{ i+1}^{-1}g_{ i}) & \leq 2E + 2 d_2 \Bar{\eta}\left(\frac{1}{2}\log \frac{\sigma_1}{\sigma_D}(g_0^{-1}g_{ i})+E\right) \\
      & \leq  2E +2d_2 \Bar{\eta}\left(\log\frac{\sigma_1}{\sigma_D}(g_0^{-1}g_{ i})+2E\right).
    \end{split}
\end{equation*}

Let $\eta(\log\frac{\sigma_1}{\sigma_D}(g_0^{-1}g_{ i})) $ be a sublinear function that bounds $ 2E +2d_2 \Bar{\eta}(\log\frac{\sigma_1}{\sigma_D}(g_0^{-1}g_{ i})+2E)$ from above. This gives us 
\begin{equation*}
   \log \frac{\sigma_1}{\sigma_D}\left(g_{ i+1}^{-1}g_{ i}\right)\leq \eta\left(\log\frac{\sigma_1}{\sigma_D}\left(g_0^{-1}g_{ i}\right)\right). 
\end{equation*}

By construction of $\rho$ in Section \ref{transverse rep}, $\log \frac{\sigma_1}{\sigma_D}(g) $ is bi-Lipschitz with respect to $d_X(\rho(g),e)$ for all $ g \in \Gamma_0$. Substituting this into the equation above proves the proposition.
 \end{proof}

Proposition ~\ref{(1)} shows $\{g_{n}\}$ satisfies condition $(1)$ in Definition \ref{sublin. Morse}. However, $\{g_n\}$ does not necessarily satisfy condition (2). This can be resolved by looking at the subsequence $\{g_{n_i}\}$, but we then need to check whether $\{g_{n_i}\}$ satisfies Equation \eqref{5}.

\begin{cor}
    If Proposition \ref{(1)} hold for $\{g_{   n}\}$, then there exists another sublinear function $\Tilde{\eta}= O(\eta)$, such that Proposition \ref{(1)} is also true for $\{g_{n_i}\}$ with respect to $\Tilde{\eta}$.
\end{cor}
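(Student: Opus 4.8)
The plan is to estimate, for each $i$, the single ``jump'' $d_H(g_{n_i}.o,g_{n_{i+1}}.o)$ of the subsequence $A_{\gamma,C}=\{g_{n_i}\}_i$ by a sublinear function of $d_H(g_{n_i}.o,g_0.o)$, and then pass from the Hilbert metric to $d_X$ exactly as in the last paragraph of the proof of Proposition \ref{(1)}. The crucial observation is the explicit description of $A_{\gamma,C}$ recalled in the remark following Definition \ref{sequence}: $g_{n_{i+1}}$ is the \emph{first} element of $A_\gamma=\{g_n\}_n$ coming after $g_{n_i}$ with $d_H(g_{n_i}.o,g_{n_{i+1}}.o)\ge C$. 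Hence every $g_k$ with $n_i\le k<n_{i+1}$ satisfies $d_H(g_{n_i}.o,g_k.o)<C$; in particular the immediate predecessor $g_{n_{i+1}-1}$ of $g_{n_{i+1}}$ in the full sequence lies in $\overline{B_C(g_{n_i}.o)}$.

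Using this together with the triangle inequality,
\[
d_H(g_{n_i}.o,g_{n_{i+1}}.o)\le d_H(g_{n_i}.o,g_{n_{i+1}-1}.o)+d_H(g_{n_{i+1}-1}.o,g_{n_{i+1}}.o)\le C+d_H(g_{n_{i+1}-1}.o,g_{n_{i+1}}.o),
\]
and I would bound the last term by applying Proposition \ref{(1)} — in the Hilbert-metric form established in its proof, Equation \eqref{4} — to the \emph{consecutive} pair $g_{n_{i+1}-1},g_{n_{i+1}}$ of the full sequence, getting $d_H(g_{n_{i+1}-1}.o,g_{n_{i+1}}.o)\le d_2\,\bar\eta\bigl(d_H(g_{n_{i+1}-1}.o,g_0.o)\bigr)$. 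Since $d_H(g_{n_{i+1}-1}.o,g_0.o)\le d_H(g_{n_i}.o,g_0.o)+C$ and $\bar\eta$ may be taken nondecreasing and concave (Remark in Section \ref{sublin. ray}, which gives $\bar\eta(t+C)\le 2\bar\eta(t)$ once $t\ge C$), we obtain, for all $i$ with $d_H(g_{n_i}.o,g_0.o)\ge C$,
\[
d_H(g_{n_i}.o,g_{n_{i+1}}.o)\le C+2d_2\,\bar\eta\bigl(d_H(g_{n_i}.o,g_0.o)\bigr).
\]
Only finitely many $i$ have $d_H(g_{n_i}.o,g_0.o)<C$ (the points $g_{n_i}.o$ are $C$-separated and $\Omega$ is proper), so after enlarging the right-hand side by a suitable additive constant this holds for all $i$, with the bound $O(\bar\eta)=O(\eta)$.

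It remains to pass to $d_X$ and to a bona fide nondecreasing concave sublinear function: this is done verbatim as at the end of the proof of Proposition \ref{(1)}, using that $\log\frac{\sigma_1}{\sigma_D}(g)$ is bi-Lipschitz to $d_X(\rho(g),e)$ for $g\in\Gamma_0$ (property (4) of Theorem \ref{$P_k$ rep general}) and that rescaling the argument of a sublinear function by a positive constant preserves both sublinearity and the $O(\eta)$ relation (Remark in Section \ref{sublin. ray}). This yields a sublinear $\tilde\eta=O(\eta)$ with $d_X(g_{n_i},g_{n_{i+1}})\le\tilde\eta\bigl(d_X(g_{n_i},g_0)\bigr)$ for all $i$, which is Equation \eqref{5} for $\{g_{n_i}\}$.

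The only genuine obstacle is that $g_{n_i}$ and $g_{n_{i+1}}$ may be separated by arbitrarily many indices in the full sequence, so one cannot merely sum the estimate of Proposition \ref{(1)} term by term. The argument gets around this precisely because maximality of $A_{\gamma,C}$ in Definition \ref{sequence} forces every intermediate element to stay within Hilbert distance $C$ of $g_{n_i}$, concentrating the whole length of the jump into the single last step $g_{n_{i+1}-1}\to g_{n_{i+1}}$, to which Proposition \ref{(1)} applies directly. A secondary technical point is to re-absorb the shifted argument $\bar\eta(\,\cdot+C\,)$ into an $O(\eta)$ function rather than a merely sublinear one, which is exactly where the concavity of sublinear functions enters.
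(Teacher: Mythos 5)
Your proof is correct, and it takes a genuinely different (and cleaner) route than the paper's.

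The paper bounds $d_H(g_{n_i}.o,g_{n_{i+1}}.o)$ by the telescoping sum $\sum_{k=n_i}^{n_{i+1}-1}d_H(g_k.o,g_{k+1}.o)$, controls each summand via iterated application of Fact~\ref{relating eta} to obtain geometrically growing factors $d_2^{k-n_i}\bar\eta(d_H(g_{n_i}.o,g_0.o))$, and then caps the number of summands by the counting argument: it sets $F=\#\{g\in\Gamma_0:d_H(o,g.o)\le C\}$, uses properness and proper discontinuity to see $F<\infty$, and derives a uniform bound $n_{i+1}-n_i\le F+1$ by contradiction, which makes $\sum_{k=0}^{F-1}d_2^k$ an absolute constant. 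Your argument avoids the counting of intermediate indices altogether. You observe, directly from the description of $A_{\gamma,C}$ in the remark after Definition~\ref{sequence}, that every intermediate $g_k$ with $n_i\le k<n_{i+1}$ lies within Hilbert distance $C$ of $g_{n_i}.o$ --- so the whole jump is concentrated in the final step $g_{n_{i+1}-1}\to g_{n_{i+1}}$, to which the consecutive-index estimate (Equation~\eqref{4}) from the proof of Proposition~\ref{(1)} applies verbatim. The only extra ingredients are the triangle inequality, monotonicity plus concavity of $\bar\eta$ to absorb the shifted argument $\bar\eta(\cdot+C)$ into $2\bar\eta(\cdot)$ for large arguments, and a finite adjustment for the finitely many $i$ with $d_H(g_{n_i}.o,g_0.o)<C$ (finitely many by the $C$-separation of $A_{\gamma,C}$ and properness of $\Omega$). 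What this buys you is a proof with no combinatorial bound on $n_{i+1}-n_i$, no iteration of Fact~\ref{relating eta}, and no appeal to proper discontinuity beyond the separation already built into $A_{\gamma,C}$; the paper's version, in exchange, makes the bound $n_{i+1}-n_i\le F+1$ explicit, which may be of independent interest. Both yield $\tilde\eta=O(\bar\eta)=O(\eta)$ after passing to $d_X$ via Fact~\ref{est. Hilbert distance} and the bi-Lipschitz comparison in property~(4) of Theorem~\ref{$P_k$ rep general}, exactly as at the end of the proof of Proposition~\ref{(1)}.
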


\begin{proof} By triangle inequality
\begin{equation*}
    \begin{split}
        d_H(g_{n_i}.o, g_{n_{i+1}}.o) &\leq \sum_{k = n_i}^{n_{i+1}-1} d_H(g_{ k}.o, g_{ k+1}.o )\\
        & \leq \sum_{k = n_i}^{n_{i+1}-1}\Bar{\eta}(d_H(g_{ k}.o, g_0.o))  \text{ \quad by Proposition \ref{(1)} }\\
        & \leq \sum_{k = n_i}^{n_{i+1}-1} d_2^{k-n_i} \Bar{\eta}(d_H(g_{n_i}.o, g_0. o)) \text{ \quad  by Fact ~\ref{relating eta}}.
    \end{split}
\end{equation*}

 Let $F:= \# \{g \in \Gamma_0 \text{ }|\text{ } d_H(o, g.o)\leq C\}$. $F$ is finite as $\Omega$ is a proper metric space and $\Gamma_0$ is discrete and acts properly discontinuously on $\Omega$. Moreover, $F+1$ is a uniform upper bounded on how many elements are omitted in $\{g_{i}\}$ between consecutive $g_{n_i}$. If not, i.e. $n_{i+1}-n_i>F+1$ for some $i$, then $$d_H(g_{n_i}.o, g_{n_{i+1}}.o)> C.$$ By maximality of $\{g_{n_i}\}_i$, we also have $$d_H(g_{n_i}.o, g_{n_{i+1}-1}.o)<C.$$ This is a contradiction because $n_{i+1}-n_i-1>F$, then by definition of $F$ $$d_H(g_{n_i}.o, g_{n_{i+1}-1}.o)>C.$$
Then the above inequality turns into $$d_H(g_{n_i}.o, g_{n_{i+1}}.o) \leq \left( \sum_{k = 0}^{F-1} d_2^k \right)\Bar{\eta}(d_H(g_{n_i}.o, g_0.o))$$ but the coefficient in front of $\Bar{\eta}(d_H(g_n.o, g_0.o))$ is now just a constant. Hence, using the same argument in the proof of Proposition \ref{(1)} we get a sublinear function $\Tilde{\eta}$ that satisfies condition (1) and thus proves the corollary.
\end{proof}

To simplify notation, let $h_i:=g_{n_i}$ for the rest of this section.

\begin{prop} {\label{(2)}}
    There exist a constant $a$ and a sublinear function $\eta' = O(\eta)$ such that the following hold for all $i $, $j$, and $\alpha \in \theta$:
    \begin{equation}\label{9}
        \alpha (\kappa(h_i^{-1}h_{i+j})) \geq a \cdot d_X(h_i,h_{i+j}) -\eta'(d_X(h_0, h_{i+j})).
    \end{equation}
 \end{prop}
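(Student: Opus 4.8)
The plan is to show that the sequence $\{h_i = g_{n_i}\}$ stays \emph{uniformly} far from the boundary $\partial\Omega$, and then use the geometric meaning of the first singular value gap in $\Omega$ to deduce the $\theta$-regularity estimate. First I would recall that, by Definition \ref{sequence} and the construction in the Remark following it, each $h_i.o$ lies within Hausdorff distance $r$ of a point $\gamma(t_i)$ on the straight geodesic ray $\gamma$, and consecutive $t_i$'s differ by a uniformly bounded amount (at least by the discreteness constant $d$, and at most by $C + 2r$ since $h_{i+1}$ is chosen as the \emph{smallest} element of $A_\gamma$ after $h_i$ with $d_H(h_i.o, h_{i+1}.o) \geq C$, together with Proposition \ref{(1)} controlling how far apart the $g_n$ can be). Consequently the segment $[h_i.o, h_{i+j}.o]$ fellow-travels, up to bounded Hausdorff error, the sub-geodesic $\gamma|_{[t_i, t_{i+j}]}$, which is itself a genuine straight geodesic in $\Omega$; I would invoke Fact \ref{bounds on Haus dist}(1) to make this precise.

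Next I would translate this into a singular value statement. The key point is that a straight geodesic segment in a properly convex $\Omega$ has both endpoints' supporting geometry controlled: more concretely, for the loxodromic-free situation here one uses that $\gamma$ is contained in $\mathcal{C}_{\Gamma_0}$ and has well-defined endpoints in $\Lambda_\Omega(\Gamma_0)$, which are $C^1$ points (projective visibility). The relevant estimate is that for $g \in \Gamma_0$ whose axis or translate stays within bounded Hausdorff distance of a fixed geodesic, one has a lower bound of the form $\alpha_1(\kappa(\rho(g))) = \log\frac{\sigma_1}{\sigma_2}(g) \geq a' \cdot d_H(g.o,o) - b'$; this is exactly the regularity one gets from fellow-traveling a geodesic with endpoints in the visible limit set, and it is parallel to the mechanism used in the proof of Theorem \ref{zero measure in non Anosov}. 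Applying this to $g = h_i^{-1}h_{i+j}$ and using that $d_H(h_i^{-1}h_{i+j}.o, o)$ is comparable to $d_H(h_i.o, h_{i+j}.o)$, which in turn is bi-Lipschitz to $d_X(h_i, h_{i+j})$ by property (4) of Theorem \ref{$P_k$ rep general}, gives $\alpha_1(\kappa(h_i^{-1}h_{i+j})) \geq a \cdot d_X(h_i,h_{i+j}) - b$ for a uniform constant $b$. By property (2) of Theorem \ref{$P_k$ rep general} this yields the same for every $\alpha \in \theta$ after renaming $a$.

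Finally I would explain why the additive constant $b$ can be replaced by the sublinear term $\eta'(d_X(h_0, h_{i+j}))$: since $\eta'$ is nondecreasing and $d_X(h_0, h_{i+j})$ grows (linearly in $i+j$, by the Claim in the proof of Theorem \ref{Cauchy in general}, whose argument applies verbatim once condition (2) of Definition \ref{sublin. Morse} is known), any fixed constant is eventually dominated by $\eta'(d_X(h_0, h_{i+j}))$, and the finitely many small-index exceptions can be absorbed by enlarging $\eta'$ by a constant — which does not affect sublinearity, by the Remark in Section \ref{sublin. ray}. I expect the main obstacle to be making the uniform lower bound $\alpha_1(\kappa(g)) \geq a' d_H(g.o,o) - b'$ rigorous for $g = h_i^{-1}h_{i+j}$ \emph{without} assuming $\Gamma_0$ is Anosov: the estimate is not a statement about all of $\Gamma_0$ but only about the specific elements whose orbit points fellow-travel $\gamma$, so the argument must use the positivity of $M$ in Equation \eqref{1} (equivalently, that $\gamma$ spends a definite fraction of its time in $\Gamma_0.B_r(o)$) to guarantee that $h_i^{-1}h_{i+j}.o$ really does track a geodesic with endpoints staying in a compact part of $\Omega$, so that the visibility/$C^1$ input can be applied uniformly. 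This is where the bulk of the technical work will lie, and it is the natural place to import the angle-gap Lemma \ref{gap} to rule out the degeneration of the two endpoints toward each other.
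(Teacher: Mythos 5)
Your proposed key estimate — that for $g = h_i^{-1}h_{i+j}$ one has a \emph{uniform} lower bound $\alpha_1(\kappa(\rho(g))) \geq a'\, d_H(g.o,o) - b'$ with a fixed additive constant $b'$ — is the statement that the sequence $\{h_i\}$ is uniformly (KLP-)regular, and this is exactly what fails in the non-Anosov setting. If $\gamma$ makes a long excursion away from $\Gamma_0.B_r(o)$ (e.g.\ into a cusp region), then consecutive $h_i$'s are far apart, and the element $h_i^{-1}h_{i+1}$ can have large $\log\tfrac{\sigma_1}{\sigma_D}$ (Hilbert distance) while $\log\tfrac{\sigma_1}{\sigma_2}$ stays small; the only thing Fact~\ref{est. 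Hilbert distance} controls is the top-to-bottom gap, not the top gap $\alpha_1$. Fellow-travelling a geodesic with visibility endpoints does not by itself give the top-gap estimate; that is the content of the Anosov property. Consequently, there is nothing to absorb: your final step of eating a constant $b$ into $\eta'$ presupposes a bound that is false. The sublinear error in the proposition is not an artifact of a constant but is structural — it records the fluctuations of the excursions away from $\Gamma_0.B_r(o)$.

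The actual mechanism is different and proceeds in two separate steps. First, one lower-bounds $\alpha(\kappa(h_i^{-1}h_{i+j}))$ by the \emph{index} gap $j$, not directly by the Hilbert distance. This uses a ``no backtracking'' additivity estimate for Cartan projections (Lemma~\ref{sing value gap}, i.e.\ Lemma~6.6 of Canary--Zimmer--Zhang): since each $h_i^{-1}h_{i+j}.o$ lies within $4r$ of the geodesic $[o,\,h_i^{-1}h_{i+j+1}.o]$, one iterates to obtain $\alpha(\kappa(h_i^{-1}h_{i+j})) \geq \sum_{k=1}^j \alpha(\kappa(h_{i+k-1}^{-1}h_{i+k})) - jH$, and then the separation constant $C$ in the definition of $A_{\gamma,C}$ is chosen large enough that each summand beats $H+1$, giving $\alpha(\kappa(h_i^{-1}h_{i+j})) \geq j$. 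Note this step is \emph{uniform} and does not yet use the density $M > 0$. Second, one converts the index $j$ into $d_X(h_i, h_{i+j})$, and this is where the density condition~\eqref{1} enters and where the sublinear $\eta'$ is born: from $2r(F+1)j \geq Leb(Cpct_\gamma(t_{n_{i+j}})) - Leb(Cpct_\gamma(t_{n_i}))$, positivity of $M$ plus the $\eps$-approximation of the density gives $j \gtrsim M\, d_H(h_i.o, h_{i+j}.o) - \eta''(d_H(h_0.o, h_{i+j}.o))$ with $\eta''$ sublinear. Your instinct that the positivity of $M$ is the crucial input was correct, but it is used to control the $j \leftrightarrow d_X$ translation, not to produce a uniform top-gap bound — and the angle-gap Lemma~\ref{gap} you suggest importing is not the relevant tool here; it appears only in the proof of Theorem~\ref{zero measure in non Anosov}, which is the Anosov (measure-zero) regime.
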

\begin{proof}
	Let $\gamma_i$ be the straight geodesic ray that connects $o$ and $h_i^{-1}\gamma_{\infty}$, where $\gamma_{\infty}$ is the end point of $\gamma$. Due to Fact \ref{bounds on Haus dist}, $d_{Haus}(\gamma_i, h_i^{-1}\gamma|_{[t_{n_i}, \infty]}) \leq d_H(o, h_i^{-1}\gamma(t_{n_i})) <r$. Then by construction and triangle inequality, $h_i^{-1}h_{i+j}.o$ is $2r$ close to $\gamma_i$ for all $j$. Let $a_{i,k} := \pi_{\gamma_i} (h_i^{-1}h_k.o)$ be a point on $\gamma_i$ that realizes the shortest distance between $h_0^{-1}h_k.o$ and $\gamma_i$. Then

\begin{equation*}
    \begin{split}
        d_H(h_i^{-1}h_{i+j}.o, [o, h_i^{-1}h_{i+j+1}.o]) & \leq d_H(h_i^{-1}h_{i+j}.o, a_{i,i+j})+d_H(a_{i,i+j}, \pi_{[o, h_i^{-1}h_{i+j+1}.o]}(h_i^{-1}h_{i+j}.o))\\
        & \leq 2r+ d_{Haus}([o, a_{i,I+j+1}], [o, h_i^{-1}h_{i+j+1}.o]).  
    \end{split}
\end{equation*}

Using Fact \ref{bounds on Haus dist} again, we can bound the last term in the above inequality by $2r$ and obtain $$ d_H(h_i^{-1}h_{i+j}.o, [o, h_i^{-1}h_{i+j+1}.o]) \leq 4r.$$

Next we have a lemma from \cite{2022arXiv220104859C} to relate the singular values of elements in $Aut(\Omega)$ going towards the boundary ``without backtracking'':

\begin{lemma}[Lemma 6.6, \cite{2022arXiv220104859C}] \label{sing value gap}
     For any $b_0 \in \Omega$ and $R>0$ there exists $H>0$ such that: for all $g, h \in \Gamma_0$ with $$d_H(gb_0, [b_0, hb_0])
    \leq R,$$ and for all $\alpha \in \theta$ we have $$\alpha(\kappa(\rho(h))) \geq \alpha(\kappa(\rho(g)))+\alpha(\kappa(\rho(g^{-1}h)))-H.  $$
\end{lemma}

Applying Lemma \ref{sing value gap} to $b_0=o$ and $R=4r$ and iterating this inequality gives 
         \begin{equation}{\label{6}}
             \alpha(\kappa(h_i^{-1}h_{i+j})) \geq \sum_{k=1}^j \alpha(\kappa(h_{i+k-1}^{-1}h_{i+k})) - j\cdot H.
        \end{equation}

        Now pick the constant C in $A_{\gamma, C} $ large enough so that for all $   g, h \in Aut(\Omega)$, if $d_H(g.o, h.o) \geq C$, then $\alpha(\kappa(g^{-1}h))>H+1$. This turns Equation \eqref{6} into 
        \begin{equation}\label{7}
            \alpha(\kappa(h_i^{-1}h_{i+j})) \geq j.
        \end{equation}

Now all that is left to do to prove Proposition \ref{(2)} is to relate the index $j$ with the distance $d_X(h_i, h_{i+j})$.

For all $M>\eps>0$ small, we can find $T$ large enough such that 
$$(M-\eps) \cdot t \leq Leb(Cpct_{\gamma}(t))\leq (M+\eps)\cdot t$$ for all $t \geq T$ due to Equation \eqref{1}. Moreover, for all $i$ such that $t_{n_i}>T$, we have \begin{equation*}
\begin{split}
      2r\cdot(F+1)\cdot j &\geq 2r \cdot (n_{i+j}-n_i) \geq Leb(Cpct_{\gamma}(t_{n_{i+j}})-Cpct_{\gamma}(t_{n_i}))\\
    &>(M-\eps)\cdot t_{n_{i+j}}-(M+\eps)\cdot t_{n_i} \\
    &>M \cdot(t_{n_{i+j}}-t_{n_i})-2\eps t_{n_{i+j}}\\
    &>M\cdot (d_H(h_i.o, h_{i+j}.o)-2r)-2\eps \cdot (d_H(h_0.o, h_{i+j}.o)+2r).
\end{split}
\end{equation*}
Again, because $\eps$ is arbitrarily small as $t_{n_{i+j}} \rightarrow \infty$, $$\eta''(d_H(h_0.o, h_{i+j}.o)):= 2\eps \cdot (d_H(h_0.o, h_{i+j}.o)+2r)+2mr$$ is sublinear as $r$, $m$ are fixed constants.

Using $\eta''$ we can rewrite Equation \eqref{7} as $$\alpha(\kappa(h_i^{-1}h_{i+j})) \geq \frac{M}{2r(F+1)} \cdot d_H(h_{i}.o, h_{i+j}.o)- \frac{\eta''(d_H(h_0.o, h_{i+j}.o))}{2r(F+1)}.$$ By Equation \eqref{3} and again the fact that $\log \frac{\sigma_1}{\sigma_D}(g) $ is bi-Lipschitz to $d_X(\rho(g),e)$, we obtain the desired inequality $$\alpha (\kappa(h_i^{-1}h_{i+j})) \geq a \cdot d_X\left( h_i,h_{i+j}\right)-\eta'(d_X(h_{i+j},h_0))$$ with $a$ depend on $M,F,r, E$, and the bi-Lipschitz constant, and $\eta'= O(\eta'').$ 

Moreover, by construction, $\eta'=O(\eta)$ as they are derived from the same limit in Equation \eqref{1}. This concludes the proof of Proposition ~\ref{(2)}.  
    \end{proof}

    \begin{proof}[Proof of Thm. ~\ref{equiv. condition}]
        Proposition \ref{(1)} and Proposition \ref{(2)} show that $\{h_i\}$ satisfies condition (1) and (3) in Definition \ref{sublin. Morse}.
        All that is left to do for proving $\{h_i\}$ is $P_{\theta}$-sublinearly Morse is to show that the concatenated path $\left(\bigcup_i[h_i,h_{i+1}]\right)$ is a sublinear ray.
        
        Let $c:[0, \infty) \rightarrow X$ be the unit speed parametrization of $\left(\bigcup_ih_0^{-1}[h_i,h_{i+1}]\right)$, it follows from the triangle inequality of the symmetric space distance $d_X$ that for all $s\leq w \in [0, \infty)$, $$d_X(c(s),c(w)) \leq d_X(c(s), h_{i_s+1})+\sum_{n=i_s+1}^{i_w-1}d_X(h_n, h_{n+1}) +d _X(h_{i_w}, c(w)) =|w-s|$$ with $h_{i_s}, h_{i_w}\in \rho\left(A_{\gamma,C}\right)$ the elements such that $c(s) \in  [h_{i_s}, h_{i_s+1}]$ and $c(w) \in[h_{i_w}, h_{i_{w}+1}]$. Hence, to show $c$ is a sublinear ray, we only need to find a suitable lower bound for $d_X(c(s),c(w))$. We do this by cases:
        
              \emph{Case 1:}    If $c(s)=h_{i_s},c(w)= h_{i_w}$, then \begin{equation*}
              d_X(c(s),c(w)) \geq 2L\cdot \frac{1}{2}\log \frac{\sigma_1}{\sigma_D}(\rho^{-1}(h_{i_s}^{-1}h_{i_w})) \geq 2L\cdot \left(d_H(h_{i_s}.o, h_{i_w}.o)-E\right)
              \end{equation*} 
              for $L$ the bi-Lipschitz constant relating $\log \frac{\sigma_1}{\sigma_D}$ to $d_X$ and $E$ in Fact \ref{est. Hilbert distance}. 
        
        By triangle inequality $$d_H(h_{i_s}.o, h_{i_w}.o)\geq (t_{i_w}-t_{i_s})-2r\geq \sum_{k = i_{s}}^{i_{w}-1} \left(d_H(h_k.o, h_{k+1}.o)-2r\right)-2r.$$ By construction, $d_H(h_k.o, h_{k+1}.o)\geq C$ for all $k$, hence
        \begin{equation}\label{30}
            d_X(c(s),c(w)) \geq c'\sum_{k=i_{s}}^{i_{w}-1}  d_H(h_k.o, h_{k+1}.o)-2r \geq c''\sum_{k=i_{s}}^{i_{w}-1}  d_X(h_k.o, h_{k+1}.o)-E'  
        \end{equation}
         with $c'$, $c''$ and $E'$ depending only on $C$, $E$, $r$ and $L$. Note that we can always enlarge the constant $C$ that defines $A_{\gamma, C}$ to ensure that $c''$ is positive.

\emph{Case 2:}
       If $c(s), c(w)$ do not belong to the orbit $\{h_n\}$, then without loss of generality assume $i_s \leq i_w-1$ (if they were equal then $c(s), c(w) $ lie on the same geodesic segment and the statement is trivial), then by triangle inequality and Proposition \ref{(1)}
        \begin{equation*}
            \begin{split}
                d_X(c(s), c(w))&\geq d_X(h_{i_{s}}, h_{i_{w}+1})- \eta (d_X(h_{i_{s}},h_0))- \eta (d_X(h_{i_{w}},h_0)) \\
                &\geq c'' \sum _{k=i_{s}}^{i_{w}}d_X(h_k, h_{k+1})-2\eta (d_X(h_{i_{w}},h_0))\\
                &\geq c'' |w-s|-2\eta (w).
            \end{split}
        \end{equation*}
                This proves that the concatenated path $c$ is a $(\frac{1}{c''}, \bar{\eta})$ sublinear ray, for $\bar{\eta}=2\eta$ and thus finishes the proof of Theorem \ref{equiv. condition}.
    \end{proof}

\subsection{Proof of Main Theorem}

Because we are working with a $\phi$-divergent group, i.e. $Q_{\Gamma}^{\phi}(\delta_{\phi})= +\infty$, Theorem \ref{ergodic in divergent case} tells us the geodesic flow on $\Gamma_0\backslash S_{\Gamma_0}\Omega$ is ergodic.

By Birkhoff Ergodic Theorem, let $V \subset \Gamma_0 \backslash S_{\Gamma_0}\Omega$ be any Borel measurable set, then for $\Bar{m}$ a.e. $v \in \Gamma_0 \backslash S_{\Gamma_0}\Omega$, we have $$\lim_{T\to \infty} \frac{Leb( \{t \in [0, T]\text{ }|\text{ } [\psi^t(v)] \in V\})}{T} = \frac{\Bar m(V)}{\Bar{m}(\Gamma_0 \backslash S_{\Gamma_0}\Omega)}.$$ By assumption, $\Bar{m}(\Gamma_0 \backslash S_{\Gamma_0}\Omega) < \infty$, so we can normalize $\Bar{m}$ to be a probability measure. Furthermore, if $W \subset  S_{\Gamma_0}\Omega$ is $\Gamma_0$-invariant, the limit can be reformulated as 
\begin{equation}{\label{8}}
    \lim_{T\to \infty} \frac{Leb(\{t \in [0, T]\text{ }|\text{ } \psi^t(v) \in W\})}{T} = \Bar{m} (\Gamma_0 \backslash W).
\end{equation}

 Fix $W$ to be the collection of vectors in $S_{\Gamma_0}\Omega$ whose base points are in $\Gamma_0.\overline{B_r(o)}$. Then $W$ is $\Gamma_0$-invariant.
 
 Let $\pi:S\Omega \rightarrow \Omega$ be the map that sends a unit tangent vector to its base point. Geometrically, $\psi^t(v) \in W$ means $\pi(\psi^t(v)) \in \Gamma_0.\overline{B_r(o)}$. The limit then turns into $$\lim_{T \to \infty } \frac{Leb(Cpct_{\gamma_v}(T))}{T} = \Bar{m} (\Gamma_0 \backslash W)$$ where $\gamma_v$ is the geodesic ray $\pi(\psi^t(v) )$.
 
Since $\Gamma_0 \backslash W$ contains non-empty compact sets, namely $\pi^{-1}(\overline{B_r(o)})$, and $\Bar{m}$ has no atom, we can conclude that $\Bar{m}(\Gamma_0 \backslash W) >0$ if we choose $r$ to be big enough. Then for $\Bar{m}$ a.e. $v \in \Gamma_0 \backslash  S_{\Gamma_0}\Omega$, the limit above converges to a positive number. By Theorem ~\ref{equiv. condition}, there exist a sequence of elements $\{h_i\}_i=A_{\gamma_v},C$ that is $P_{\theta}$-sublinearly Morse.

We finish the proof of our main theorem by assuming the set 
\begin{center}
    $U:= \{ \zeta \in \Lambda_{\theta}(\Gamma) \text{ } |$ there is no $P_{\theta}$-sublinearly Morse sequence converging to $ \zeta \}$
\end{center} has positive measure in $\Lambda_{\theta}(\Gamma)$. This means for all $\zeta \in U$, any geodesic ray $l \subset \Omega$ with $l(+\infty) = \xi^{-1}(\zeta) $, we have $$\lim_{T\rightarrow \infty} \frac{Leb(Cpct_l(T))}{T}= 0$$. 

To abuse notation, we ignore the representation $\xi^{-1}$. We can then define 
\begin{center}
    $V:= \{v \in  S_{\Gamma_0}\Omega \text{ }| \text{ } \lim_{t\rightarrow \infty}\pi(\psi^t(v) ) = \zeta \}$.
\end{center}

 Let $\tilde{V}$ be the projection of $V$ to the quotient space $\Gamma_0\backslash  S_{\Gamma_0}\Omega$. Note that $m(V)$ is proportional to $\Bar{\mu}(\Lambda_{\Omega}(\Gamma_0)) \cdot \mu(U) \cdot s(\R) > 0$ by Fubini's Theorem, then $\Bar{m}(\tilde{V})>0$. By construction, for all $v \in V$ $$\lim_{\T\rightarrow \infty} \frac{Leb(Cpct_{\gamma_v}(T))}{T}=0.$$ 
 
 This is a contradiction because on the positive measured set $\tilde{V}$, the limit above converges to zero instead of a positive number. Hence, we conclude the proof of the genericity of $P_{\theta}$-sublinearly Morse points in $\Lambda_{\theta}(\Gamma)$.

\section{Relatively Anosov Representations}

In this section, we prove a corollary of our main theorem in the relative Anosov setting. In order to make relative Anosovness precise, we need to define a few things first. Let $G$ be the higher rank Lie group defined in Section \ref{prelim}.

\begin{defn}[Geometrically Finiteness]
Let $M$ be a compact perfect metrizable space and $H \subset Homeo(M)$ be a convergence group acting on $M$. $H$ is \emph{geometrically finite} if for all $\eta \in M$, $\eta$ is either 
\begin{itemize}
    \item \emph{conical} in the convergence group sense, as defined in Section \ref{dichotomy},
    
\end{itemize}
or
\begin{itemize}
    \item \emph{bounded parabolic}, that is $Stab_{\Gamma}(\eta)$ is a parabolic subgroup of $\Gamma$, and that $Stab_{\Gamma}(\eta)$ acts on $M \backslash \{\eta\}$ cocompactly.
\end{itemize}
    
\end{defn}

Let $\Gamma\subset G$ be a finitely generated group, $\mathcal{P}$ a collection of finitely generated subgroups of $\Gamma$. The pair $(\Gamma,\mathcal{P})$ is \emph{relative hyperbolic} if $\Gamma$ acts on a compact perfect metrizable space $M$ as a geometrically finite convergence subgroup and $\mathcal{P}$ contains all possible maximal parabolic subgroups up to conjugating by elements in $\Gamma$. 

Fixing a generating set $S$ of $\Gamma$, we can construct the Groves-Manning cusp space by gluing combinatorial horoballs to $Cay(\Gamma,S)$. One can refer to Section 3.4 in Zhu--Zimmer \cite{2022arXiv220714737Z} for the explicit construction. The Groves--Manning cusp space is Gromov hyperbolic, and its Gromov boundary, denoted by $\p(\Gamma, \mathcal{P})$, serves as a compact perfect metrizable space where $\Gamma$ act on as a geometrically finite convergence group. 

There are various characterizations of relative Anosov subgroups presented in Section 4 of \cite{2022arXiv220714737Z}, we will present just one here.

\begin{defn}
A subgroup $\Gamma \subset G$ is \emph{$P_{\theta}$-Anosov} relative to $\mathcal{P}$ if it is $P_{\theta}$-transverse, $(\Gamma, \mathcal{P})$ is a relatively hyperbolic pair, and the induced boundary map $$\xi': \p (\Gamma, \mathcal{P})\rightarrow \mathcal{F}_{\theta}$$ is $\Gamma$-equivariant and a homeomorphism onto $\Lambda _{\theta}(\Gamma)$.

\end{defn}

Fix $(\Gamma, \mathcal{P})$ to be relatively $P_{\theta}$-Anosov. Let $\Omega$, $\Gamma_0 \subset Aut(\Omega)$, and $\rho$ be as in Theorem \ref{$P_k$ rep general}. They exist because $\Gamma$ is $P_{\theta}$-transverse by definition. Note that $\xi'(\p(\Gamma,\mathcal{P}))$ is homeomorphic to $\Lambda_{\Omega}(\rho^{-1}(\Gamma)).$ We will omit the representations in the rest of this section when the context is clear. Our main theorem can be restated as follow.

\begin{thm}\label{genericity in rel Anosov}
     For $\phi \in \mathfrak{a}^*_{\theta}$, and $\mu$ the unique Patterson--Sullivan measure of dimension $\delta_{\phi}<\infty$ supported on $\Lambda_{\theta}(\Gamma) \subset \mathcal{F}_{\theta}$, $\mu(\p_{SM,\theta}(\Gamma))=1$.
\end{thm}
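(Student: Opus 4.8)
Theorem~\ref{genericity in rel Anosov} is a special case of the Main Theorem (Theorem~\ref{genericity}), so the plan is simply to check that a relatively $P_\theta$-Anosov pair $(\Gamma,\mathcal P)$ satisfies every hypothesis there. Two of them are immediate: by definition a relatively $P_\theta$-Anosov subgroup is $P_\theta$-transverse, hence discrete; and since the relatively hyperbolic boundary $\p(\Gamma,\mathcal P)$ is an infinite compact metrizable space carried homeomorphically onto $\Lambda_\theta(\Gamma)$ by $\xi'$, the group $\Gamma$ is non-elementary. The two substantive points left to verify are: (i) $\Gamma$ is $\phi$-divergent, i.e.\ $Q_\Gamma^\phi(\delta_\phi)=+\infty$; and (ii) the Bowen--Margulis--Sullivan measure $\bar m$ on $\Gamma_0\backslash S\Omega$ built from $\mu$ and a $\iota^*(\phi)$-Patterson--Sullivan measure $\bar\mu$ is finite, where $\Omega$, $\Gamma_0$, $\rho$ are produced by Theorem~\ref{$P_k$ rep general}.

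For (i) I would argue through the Hopf--Tsuji--Sullivan dichotomy (Theorem~\ref{ergodic in divergent case}). Because the $\Gamma$-action on $\p(\Gamma,\mathcal P)$ is geometrically finite, every boundary point is conical or bounded parabolic, and the bounded parabolic points form a countable set ($\Gamma$-orbits of the finitely many parabolic fixed points coming from $\mathcal P$). In the relatively $P_\theta$-Anosov setting each $P\in\mathcal P$ has $\phi$-critical exponent strictly less than $\delta_\phi$, so a shadow-lemma / Borel--Cantelli argument shows the bounded parabolic points are $\mu$-null; hence $\mu(\Lambda_\theta^{con}(\Gamma))=1>0$, and the convergent alternative of Theorem~\ref{ergodic in divergent case} is excluded, giving $Q_\Gamma^\phi(\delta_\phi)=+\infty$. (Alternatively, divergence of relatively Anosov groups at the critical exponent can be quoted directly from the Patterson--Sullivan theory developed in \cite{2023arXiv230411515C,2023arXiv230606846K}.) The $\phi$-divergence also makes the geodesic flow on $\Gamma_0\backslash S_{\Gamma_0}\Omega$ ergodic, which together with a shadow lemma forces any two dimension-$\delta_\phi$ Patterson--Sullivan densities to be proportional; this justifies speaking of \emph{the} measure $\mu$.

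For (ii), finiteness of $\bar m$, I would invoke (or adapt) the Dal'bo--Otal--Peign\'e-type finiteness criterion in this setting: $\bar m$ is finite as soon as each cusp group $P\in\mathcal P$ contributes a convergent truncated Poincar\'e series at $\delta_\phi$, which holds because relative $P_\theta$-Anosovness forces the parabolic subgroups to be virtually nilpotent with strictly sub-critical $\phi$-growth and with orbits that escape in a uniformly regular direction. Granting (i) and (ii), Theorem~\ref{genericity} applies verbatim and yields $\mu(\p_{SM,\theta}(\Gamma))=1$.

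The genuine obstacle is (ii): the finiteness of the Bowen--Margulis--Sullivan measure is exactly the place where the \emph{relative Anosov} hypothesis --- as opposed to merely ``$P_\theta$-transverse'' --- is used, and pinning it down requires a quantitative description of how the cusp-subgroup orbits distribute in the Hilbert-geometry flow space $S_{\Gamma_0}\Omega$; this will either be imported from recent work on BMS measures of geometrically finite convex-projective (equivalently relatively Anosov) groups, or obtained by reworking the classical cusp-estimate argument directly in $(\Omega,d_H)$. Step (i), by contrast, should be routine once the dichotomy of Theorem~\ref{ergodic in divergent case} is in hand.
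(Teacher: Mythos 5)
Your proposal has the correct structure and is essentially the paper's strategy: reduce Theorem~\ref{genericity in rel Anosov} to the Main Theorem~\ref{genericity} and verify the two missing hypotheses, namely (i) $\phi$-divergence and (ii) finiteness of the Bowen--Margulis--Sullivan measure $\bar m$. You are also right that (ii) is the genuine obstacle and the place where relative Anosovness (rather than mere transversality) earns its keep. However, the proposal stops short of being a proof precisely at (ii): you describe what a cusp estimate ``would need to do'' and gesture at importing or reworking a Dal'bo--Otal--Peign\'e-type criterion, but you do not carry it out. This is where the paper's content actually lives. The paper proves Proposition~\ref{finiteness of m} from scratch in $(\Omega,d_H)$ following Blayac--Zhu: it first produces a $\Gamma_0$-equivariant family of disjoint horoballs at bounded parabolic points, proves a compactness-off-the-cusps decomposition (Lemma~\ref{decomposition}, which itself requires ruling out escape to both conical and bounded parabolic points using the horofunction $\beta_\alpha$ and the Gromov product), and then estimates $\bar m$ on each cusp by bounding the time geodesics spend in the horoball, applying a shadow lemma (\cite[Prop.~7.1]{2022arXiv220104859C}) and, crucially, the \emph{entropy drop} $\delta_\phi(P)<\delta_\phi(\Gamma)$ for peripheral subgroups $P\in\mathcal P$ from \cite[Thm.~7.1, Cor.~7.2]{2023arXiv230804023C}. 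You name the right inputs (shadow lemma, sub-critical cusp growth), but the convergence of the cusp series hinges on a quantitative comparison of $d_H(o,p.o)$ with $\phi(\kappa(p))$ (which the paper gets from \cite[Thm.~10.1]{2023arXiv230804023C} together with Theorem~\ref{$P_k$ rep general}(4)), and none of that is supplied.

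On (i), the paper simply cites \cite[Thm.~8.1]{2023arXiv230804023C}; your alternative HTS-dichotomy/Borel--Cantelli route is plausible but has a circularity risk worth flagging: the nonatomicity of $\mu$ in Theorem~\ref{ergodic in divergent case} is a \emph{consequence} of divergence, so you cannot freely use ``$\mu$ has no atoms, hence the countably many parabolic points are null'' before divergence is established; you would genuinely need the shadow-lemma estimate $\mu(\{\eta\})\lesssim e^{-\delta_\phi\phi(\kappa(p))}\to 0$, which fortunately does not presuppose divergence. You also offer a justification for uniqueness of $\mu$ (via ergodicity and a shadow lemma) that the paper asserts but does not spell out --- a reasonable addition. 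Net assessment: the logical skeleton is right and matches the paper, but the substantive half of the paper's argument, Proposition~\ref{finiteness of m} and its supporting Lemma~\ref{decomposition}, is acknowledged rather than proved.
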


The key differences here are that we can drop the assumptions in our main theorem where 1) $\Gamma$ is $\phi$-divergent; 2) the induced Bowen--Margulis--Sullivan measure $\bar{m}$ as defined in Section \ref{BMS measure} is finite. The fact that $\Gamma$ is $\phi$-divergent, i.e. the Poincar\'e series diverges at $\delta_{\phi}$, follows directly from Theorem 8.1 in \cite{2023arXiv230804023C}, when $\Gamma$ is $P_{\theta}$-Anosov relative to $\mathcal{P}$. The finiteness of $\bar{m}$ is shown by the following proposition.

\begin{prop}[Finiteness of Bowen--Margulis--Sullivan measure] \label{finiteness of m}
    Let $(\Gamma, \mathcal{P})$ be the pair defined as above. Then $\bar{m}(\Gamma_0\backslash S_{\Gamma_0}\Omega)$ is finite.
\end{prop}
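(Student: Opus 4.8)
## Proof Plan for Proposition \ref{finiteness of m}

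The plan is to reduce the finiteness of $\bar m$ on $\Gamma_0\backslash S_{\Gamma_0}\Omega$ to the standard dichotomy describing when a Bowen--Margulis--Sullivan measure on a geometrically finite space is finite, namely that finiteness holds precisely when the parabolic subgroups in $\mathcal P$ do not ``grow too fast'' relative to the critical exponent. Concretely, I would first recall that, since $(\Gamma,\mathcal P)$ is relatively $P_\theta$-Anosov, the Hopf parametrization identifies $S_{\Gamma_0}\Omega$ with $(\Lambda_\Omega(\Gamma_0)\times\Lambda_\Omega(\Gamma_0)\setminus\text{diag})\times\R$, and under the limit map $\xi$ this matches $\Lambda_\theta(\Gamma)^{(2)}\times\R$. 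The measure $\bar m$ is then, up to the cocycle factor $e^{-\delta_\phi\phi([\cdot,\cdot]_\theta)}$, the product $\xi_*\bar\mu\otimes\xi_*\mu\otimes ds$ pushed to the quotient. So the whole question is whether this BMS measure on $\Gamma_0\backslash(\Lambda_\Omega(\Gamma_0)^{(2)}\times\R)$ is finite.

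The key structural input is the decomposition of $\Lambda_\Omega(\Gamma_0)$ into the conical limit set and the countably many orbits of bounded parabolic points coming from $\mathcal P$. On the ``thick part'' — the piece of $\Gamma_0\backslash S_{\Gamma_0}\Omega$ whose geodesics spend a definite proportion of time in a fixed compact core — finiteness is automatic because that region is (coarsely) compact. The content is therefore concentrated in the cuspidal regions associated to each $P\in\mathcal P$. For each such $P$, I would estimate the $\bar m$-mass of the corresponding horoball region by the usual Sullivan shadow-lemma computation: the mass of the cusp neighborhood at depth $\geq t$ is controlled by $\sum_{p\in P} (\text{something like } t)\, e^{-\delta_\phi\,\phi(\kappa(p))}$ suitably truncated, which converges when the parabolic critical exponent $\delta_\phi(P)$ satisfies $\delta_\phi(P)<\delta_\phi$ — or, more precisely, when the refined summability condition $\sum_{p\in P}\,\ell(p)\,e^{-\delta_\phi\phi(\kappa(p))}<\infty$ holds, where $\ell(p)$ is the relevant ``width'' factor. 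The point is that relative $P_\theta$-Anosovness forces exactly this gap: the restriction of the Anosov boundary map and the Groves--Manning horoball geometry guarantee that along each parabolic subgroup the Cartan projection $\phi\circ\kappa$ grows at most polynomially in the word length in $P$ (in fact it is coarsely comparable to $\log$ of the Groves--Manning distance), so the relevant Poincaré-type series for $P$ converges at $s=\delta_\phi$ by the corresponding result in \cite{2023arXiv230804023C}. Summing the finitely many cusp contributions with the thick part gives $\bar m(\Gamma_0\backslash S_{\Gamma_0}\Omega)<\infty$.

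The main obstacle I anticipate is making the cuspidal shadow estimate precise in the Hilbert-geometry setting: one needs a shadow lemma for $\mu$ (and $\bar\mu$) on $\Lambda_\theta(\Gamma)$ that is uniform over the cusps, together with a good comparison between the partial Iwasawa cocycle $B_\theta$, the function $\phi\circ\kappa$, and the Hilbert-distance Busemann functions on $\Omega$, so that the integral of $e^{-\delta_\phi\phi([\cdot,\cdot]_\theta)}$ over a cusp region can be split as a sum over $P$ of elementary integrals. Once that translation is in place — using Fact \ref{est. Hilbert distance} and the bi-Lipschitz comparison from Theorem \ref{$P_k$ rep general}(4) to move between $d_H$, $d_X$ and $\log(\sigma_1/\sigma_D)$, and the known behavior of Patterson--Sullivan measures of relatively Anosov groups from \cite{2023arXiv230804023C} — the convergence of each cuspidal series follows, and the finiteness of $\bar m$ is immediate. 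I would also remark that, as in the classical Fuchsian/Kleinian picture, this argument simultaneously shows that $\bar m$ is a \emph{finite} ergodic measure, which is what is needed to invoke Birkhoff's theorem in the proof of Theorem \ref{genericity}.
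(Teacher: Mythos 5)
Your proposal follows essentially the same route as the paper: decompose $S_{\Gamma_0}\Omega$ into a compact thick part and a union of $\Gamma_0$-equivariant horoballs at bounded parabolic points (the paper's Lemma \ref{decomposition}, imported from Blayac--Zhu), show the thick part contributes finitely by compactness, and bound the mass of each cusp by a shadow-lemma estimate of the form $\sum_{p\in P}(\text{width})\cdot e^{-\delta_\phi\phi(\kappa(p))}$, which converges because of the entropy drop $\delta_\phi(P)<\delta_\phi(\Gamma)$ from Canary--Zhang--Zimmer. The obstacles you flag (uniformity of the shadow estimate, matching the Iwasawa/Gromov-product cocycle with Hilbert Busemann functions, moving between $d_H$, $d_X$, $\log(\sigma_1/\sigma_D)$ via Theorem \ref{$P_k$ rep general}(4) and Fact \ref{est. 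Hilbert distance}) are exactly what the paper spends its technical effort on.

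One small correction: your parenthetical claim that $\phi\circ\kappa$ restricted to a peripheral subgroup $P$ is ``coarsely comparable to $\log$ of the Groves--Manning distance'' has an extra logarithm. The correct statement — and the one the paper actually uses via Theorem 10.1 of \cite{2023arXiv230804023C} — is that $\phi\circ\kappa(p)$ is quasi-isometric to $d_X(p,e)$ and hence to $d_H(p\cdot o, o)$, which for $p\in P$ is comparable to the Groves--Manning distance itself (i.e.\ to $\log$ of the $P$-word length, not to $\log$ of the Groves--Manning distance). This is what lets the paper absorb the polynomial width factor $(2R+d_H(o,p\cdot o))$ into the exponent as a sublinear perturbation and then appeal to the strict entropy gap $\delta_\phi(P)<\delta_\phi(\Gamma)$ to conclude. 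Your final conclusion and citation are correct; only the intermediate growth-rate claim should be fixed.
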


\begin{rmk*}
    In their paper \cite{2024arXiv240409745K}, Kim--Oh proved a similar result (Theorem 1.1) of finiteness of the Bowen--Margulis--Sullivan measure when $\Gamma$ is a relative Anosov subgroup. However, as we noted in section \ref{PS measure}, their result applies to a different flow space than the ones we are focusing on in this paper, thus we cannot apply their result directly.
\end{rmk*}

We prove the proposition mainly by following methods used in Blayac--Zhu \cite[Theorem 8.1]{2021arXiv210608079B}.

We will first show that $S_{\Gamma_0}\Omega$ can be decomposed into a compact part and a family of disjoint horoballs based at the bounded parabolic points (which, after quotienting out by action of $\Gamma_0$, are referred as the cusps). Then one can show that $\Bar{m}$ is finite on each of the cusps. Since there are only finitely many conjugacy classes of maximal parabolic subgroups \cite{doi:10.1142/S0218196712500166}, the number of orbits of the bounded parabolic points is finite. Moreover, because the support of $\Bar{m}$ is compact outside of these cusp regions, and after quotienting, there are only finitely many cusp regions with finite Bowen--Margulis--Sullivan measure, we can then conclude that $\bar{m}(\Gamma_0 \backslash S\Omega)$ is finite.

    Before we proceed to the actual proof, there is one last technical detail. The conical limit points used in \cite{2021arXiv210608079B} are different from the conical limit points of a convergence group. In \cite{2021arXiv210608079B}, a point $\eta \in \Lambda_{\Omega}(\Gamma_0)$ is \emph{conical} if there exists a sequence $\{\gamma_n\} \subset \Gamma_0$ that converges to $\eta$ and $\gamma_n.o$ is uniformly bounded away from the straight geodesic ray $[o, \eta)$. 
    
    This gap is bridged by the following lemma. 

\begin{lemma}\cite[Lemma 3.6]{2022arXiv220104859C}\label{equiv. conical}
    For all $x \in \Lambda_{\Omega}(\Gamma_0)$, $x \in \xi^{-1}\circ \xi'(\p^{con}(\Gamma, \mathcal{P}))$ if and only if there is a(ny) $b_0 \in \Omega$ and a sequence $\{\gamma_n\}$ in $\Gamma_0$ such that $\gamma_n \rightarrow x$ and  $$\sup_{n \geq 1} d_{\Omega}(\gamma_n. b_0, [b_0, x)) < + \infty.$$
\end{lemma}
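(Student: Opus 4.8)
The plan is to translate the statement into a purely dynamical fact about the convergence action of $\Gamma_0$ on $\p\Omega$, and then prove the resulting equivalence, using proper convexity for one implication and projective visibility together with the $C^1$-smoothness of the limit points for the other.

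First I would reduce as follows. By Theorem~\ref{$P_k$ rep general} the homeomorphism $\xi$ conjugates the convergence action $\Gamma_0\curvearrowright\Lambda_\Omega(\Gamma_0)$ to $\Gamma\curvearrowright\Lambda_\theta(\Gamma)$, while $\xi'$ conjugates $\Gamma\curvearrowright\p(\Gamma,\mathcal P)$ to $\Gamma\curvearrowright\Lambda_\theta(\Gamma)$; since being a conical limit point in the convergence-group sense is preserved by a topological conjugacy of convergence actions, $x\in\xi^{-1}\circ\xi'(\p^{con}(\Gamma,\mathcal P))$ if and only if $x$ is a conical limit point of $\Gamma_0\curvearrowright\Lambda_\Omega(\Gamma_0)\subset\p\Omega$ in the sense of Section~\ref{dichotomy}. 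Thus the lemma becomes: $x\in\Lambda_\Omega(\Gamma_0)$ is convergence-conical if and only if there exist $b_0\in\Omega$ and $\{\gamma_n\}\subset\Gamma_0$ with $\gamma_n\to x$ and $\sup_n d_\Omega(\gamma_n.b_0,[b_0,x))<\infty$; base-point independence of the latter condition then follows from a routine coarse argument once the equivalence is established.

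For ``geometric $\Rightarrow$ conical'', suppose $\gamma_n\to x$ and $d_\Omega(\gamma_n.b_0,[b_0,x))\le R$. Pick $p_n\in[b_0,x)$ with $d_\Omega(\gamma_n.b_0,p_n)\le R+1$; then $p_n\to x$ along the ray, so $d_\Omega(b_0,p_n)\to\infty$ and hence $\gamma_n\to\infty$ in the discrete group $\Gamma_0$. Passing to a subsequence: $\gamma_n^{-1}.b_0\to u\in\Lambda_\Omega(\Gamma_0)$, $\gamma_n^{-1}.p_n\to c\in\overline{B_{R+1}(b_0)}\subset\Omega$, $\gamma_n^{-1}.x\to v\in\Lambda_\Omega(\Gamma_0)$ (the limit set is closed and $\Gamma_0$-invariant), and $\gamma_n^{-1}$ has north--south dynamics on $\p\Omega$ with attracting point $\lambda^-$ and repelling point $\lambda^+$, where $\lambda^-=u$ because $\gamma_n^{-1}.b_0\to u$. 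The straight segments $\gamma_n^{-1}.[b_0,x)=[\gamma_n^{-1}.b_0,\gamma_n^{-1}.x)$ have endpoints converging to $u$ and to $v$ while their interior points $\gamma_n^{-1}.p_n$ converge to $c\in\Omega$, so proper convexity of $\Omega$ forces $u\ne v$. If $\lambda^+\ne x$, then $\gamma_n^{-1}.x\to\lambda^-=u$, contradicting $u\ne v$; hence $\lambda^+=x$, so $\gamma_n^{-1}\to u$ locally uniformly on $\p\Omega\setminus\{x\}$ with $\gamma_n^{-1}.x\to v\ne u$, which is precisely the definition of $x$ being conical (witnessed by $\gamma_n^{-1}$, attracting point $v$, repelling point $u$).

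For the converse, let $x$ be conical, witnessed by $\{g_n\}$. Extracting north--south dynamics and replacing $g_n$ by $g_n^{-1}$ yields $\{h_n\}\subset\Gamma_0$ with $h_n\to x$ locally uniformly on $\p\Omega\setminus\{b\}$ and $h_n^{-1}\to b$ locally uniformly on $\p\Omega\setminus\{x\}$, for some $b\ne x$ with $b\in\Lambda_\Omega(\Gamma_0)$. By projective visibility of $\Gamma_0$ the straight geodesic line $L=(b,x)$ lies in $\Omega$; take $b_0:=m\in L$ (allowed by base-point independence), so $[b_0,x)\subset L$ and $h_n.m\to x$. It remains to bound $d_\Omega(h_n.m,[m,x))=d_\Omega(m,[h_n^{-1}.m,h_n^{-1}.x))$; one uses $h_n^{-1}.m\to b$ together with the $C^1$-smoothness of the extreme points $b$ and $x$ of $\p\Omega$ to show the straight rays $[h_n^{-1}.m,h_n^{-1}.x)$, which emanate from points converging to $b$ with ideal endpoints converging into the direction $(b,x)$, stay within a bounded Hilbert-distance of $m$. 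This tracking estimate is the step I expect to be the main obstacle, since $\Omega$ is not Gromov hyperbolic and the only substitutes for a Morse lemma are projective visibility and $C^1$-smoothness of $\Lambda_\Omega(\Gamma_0)$. A cleaner route, exploiting that $\Gamma_0\cong\Gamma$ and $(\Gamma,\mathcal P)$ is relatively hyperbolic with $\Gamma_0\curvearrowright\p\Omega$ its geometrically finite convergence action, is to run the argument inside the convex hull $\mathcal C_{\Gamma_0}$: a conical point is not bounded parabolic, so a geodesic ray to $x$ stays in the thick part of $\mathcal C_{\Gamma_0}$, on which the Hilbert metric is quasi-isometric to the Groves--Manning cusped-space metric, where the desired tracking is the standard characterization of conical limit points of a geometrically finite group.
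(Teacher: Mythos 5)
A preliminary remark: the paper does not prove this lemma at all; it is quoted verbatim from \cite[Lemma 3.6]{2022arXiv220104859C}, so there is no in-paper argument to compare against, and your proposal has to be judged on its own terms. Your reduction (conicality is a topological notion preserved by the conjugating homeomorphisms $\xi$, $\xi'$) is exactly the remark the paper itself makes after stating the lemma, and your forward direction (bounded tracking of $[b_0,x)$ implies convergence-group conicality) is essentially correct, up to two points you use silently: the convergence/north--south dynamics take place on $\Lambda_{\Omega}(\Gamma_0)$, not on all of $\p\Omega$, and the identification of the attracting/repelling pair of $\gamma_n^{\pm1}$ with the limits of the orbit points $\gamma_n^{\pm1}.b_0$ is a property of projectively visible (transverse) groups that itself needs a citation.

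The genuine gap is in the converse, which you explicitly leave open. The tracking estimate you flag as ``the main obstacle'' is in fact immediate once you use the piece of data you dropped: the definition of a conical point supplies a \emph{second} limit point, namely $\gamma_n.x\to a$ with $a\neq b$ and $a,b\in\Lambda_{\Omega}(\Gamma_0)$. Your description of the configuration is wrong on this point: the ideal endpoints $h_n^{-1}.x=\gamma_n.x$ converge to $a$, not ``into the direction $(b,x)$'', and $C^1$-smoothness is not the relevant input. Writing $h_n=\gamma_n^{-1}$ and $d_\Omega(h_n.m,[m,x))=d_\Omega\bigl(m,[\gamma_n.m,\gamma_n.x)\bigr)$, the segments $[\gamma_n.m,\gamma_n.x)$ have endpoints converging to $b$ and to $a$; since $a\neq b$ and $\Gamma_0$ is projectively visible, $(b,a)\subset\Omega$, so for any fixed $z\in(b,a)$ these segments eventually pass within Hilbert distance $1$ of $z$, giving the uniform bound $d_\Omega(m,z)+1$ (the orbit convergences $\gamma_n.m\to b$ and $\gamma_n^{-1}.m\to x$ again come from the visible-group correspondence between dynamics on $\Lambda_\Omega(\Gamma_0)$ and on $\Omega$; base-point independence then follows from Fact \ref{bounds on Haus dist}). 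Your fallback route through the Groves--Manning cusp space is both unnecessary and itself gapped: a geodesic ray ending at a conical point need not stay in the thick part of $\mathcal{C}_{\Gamma_0}$ (it only returns there infinitely often), the asserted quasi-isometry between the Hilbert metric on the thick part and the cusped-space metric is a nontrivial claim you would have to prove, and the cited lemma holds for transverse/visible groups with no relative hyperbolicity hypothesis, so an argument relying on geometric finiteness proves less than the statement.
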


Here $\p^{con}(\Gamma, \mathcal{P})$ denotes the set of conical limit points of the convergence group action in $\p(\Gamma, \mathcal{P})$. Since conicality in the convergence group sense is a purely topological condition, and $\xi$ and $\xi'$ are homeomorphisms, the conicality is preserved under $\xi^{-1}\circ \xi'$ (similarly, one can check that the bounded parabolic points are also preserved). The lemma above then says that the two notions of conical limit points coincide.

Other concepts involved in the proof of Proposition \ref{finiteness of m} are horofunctions and hororballs associated to $\Omega.$ For any $x \in \Omega$, the \emph{horofunction} $\beta_x:\Omega \times \Omega \rightarrow \R$ is defined as $$\beta_x(a,b) = d_H(a,x)-d_H(b,x).$$ Note that after fixing a basepoint $o \in \Omega$, the horofunctions provide a way to embed $\Omega$ into $\mathcal{C}(\Omega)$, the space of continuous functions defined on $\Omega$, by sending $x \in \Omega$ to $\beta_x(\cdot, o)$. The image of $\Omega$, denoted by $\beta(\Omega)$, is relatively compact with respect to the topology of uniform convergence, hence we can view the closure of $\beta(\Omega)$ as a compactification of $\Omega$, denoted by $\overline{\Omega^h}$, and let $\p_h \Omega:= \overline{\Omega^h} \backslash \beta(\Omega)$ be the \emph{horoboundary} of $\Omega$. For any $\beta \in \p_h\Omega$ and $x\in \Omega$, we define the \emph{horoball} and \emph{horosphere} centered at $\beta$ and passing through $x$, denoted by $\mathcal{H}_{\beta}(x)$ and $\p\mathcal{H}_{\beta}(x) $, to be $$\mathcal{H}_{\beta}(x):=\{y \in \Omega \text{ }|\text{ } \beta(x,y) >0\},$$ $$\p\mathcal{H}_{\beta}(x):= \{y \in \Omega \text{ }|\text{ } \beta(x,y) =0\}.$$

Moreover, a theorem of Walsh \cite[Theorem 1.3]{2006math.....11920W} states that for all $\beta \in \p_h\Omega$ and $\{x_k\}\subset\Omega$ such that $\beta_{x_k}\rightarrow\beta$, there exists $\xi \in \p\Omega$ such that $x_k \rightarrow \xi$ in $\overline{\Omega}$. Hence there is a natural surjective projection $\pi_h:\p_h\Omega\rightarrow \p\Omega$. For any $\xi \in \p\Omega$, the preimage $\pi_h^{-1}(\xi)$ contains exactly one point if and only if $\xi $ is a $C^1$-smooth in $\p\Omega$ \cite[Lemma 3.2]{2017arXiv170508519B}. Note that for all $\xi \in \Lambda_{\Omega}(\Gamma_0)$, $\xi$ is $C^1$-smooth as $\Gamma_0$ is projectively visible, hence $\pi_h^{-1}$ is well-defined on $\Lambda_{\Omega}(\Gamma_0).$ We will use $\beta_{\xi}$ to denote the preimage of $\xi \in \Lambda_{\Omega}(\Gamma_0)$, and shorthand $\mathcal{H}_{\xi}$ for $\mathcal{H}_{\beta_{\xi}}$.

With the above discussion, $\rho^{-1}(\Gamma)$ acts geometrically finitely on $\Omega$ as in Definition 1.10 in \cite{2021arXiv210608079B}, then we have the following.

    \begin{lemma}\cite[Lemma 8.11]{2021arXiv210608079B}
        If $x, y$ are two bounded parabolic points in $\Lambda_{\Omega}(\Gamma_0)$, and $\mathcal{H}' $ a horoball centered at $y$. Then there exists a horoball $\mathcal{H} $ centered at $x$ such that for all $   \gamma \in \Gamma_0$, either $\mathcal{H'}\cap \gamma \mathcal{H} = \emptyset $ or $\gamma x= y$.
    \end{lemma}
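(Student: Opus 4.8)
The plan is to argue by contradiction, exploiting the geometrically finite structure of the $\Gamma_0$-action recalled above. First I would record the structural input: a geometrically finite action carries a $\Gamma_0$-invariant, locally finite family $\{\mathcal{H}_\xi\}$ of pairwise disjoint horoballs indexed by the bounded parabolic points of $\Lambda_{\Omega}(\Gamma_0)$, with $\gamma\mathcal{H}_\xi=\mathcal{H}_{\gamma\xi}$ for all $\gamma\in\Gamma_0$ and with each $\mathcal{H}_\xi$ invariant (at least up to a uniformly bounded change of Busemann level) under $\mathrm{Stab}_{\Gamma_0}(\xi)$; write $\mathcal{H}_x,\mathcal{H}_y$ for the members at $x,y$ and $\Gamma_x,\Gamma_y$ for the corresponding stabilizers. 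I would also use that, $y$ being bounded parabolic, $\Gamma_y$ acts cocompactly on the horospheres centered at $y$.

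The first step is to reduce to the case $\mathcal{H}_y\subseteq\mathcal{H}'$. Horoballs centered at $y$ are the superlevel sets of $\beta_y(\cdot,o)$, hence totally ordered by inclusion; if $\mathcal{H}'\subseteq\mathcal{H}_y$ then $\mathcal{H}:=\mathcal{H}_x$ already works, since for $\gamma x\neq y$ the horoball $\gamma\mathcal{H}_x=\mathcal{H}_{\gamma x}$ is disjoint from $\mathcal{H}_y\supseteq\mathcal{H}'$. In the remaining case the collar $\mathcal{H}'\setminus\mathcal{H}_y$ is $\Gamma_y$-cocompact, say $\mathcal{H}'\setminus\mathcal{H}_y=\Gamma_y\cdot K_0$ with $K_0\subset\Omega$ compact; by local finiteness of the horoball family only finitely many $\mathcal{H}_\xi$ meet $K_0$, giving a finite set $S_0$ of their centers, with $y\notin S_0$.

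Next I would take $\mathcal{H}$ to be a deep $\Gamma_x$-invariant horoball at $x$ contained in $\mathcal{H}_x$, with depth to be chosen, and analyze any $\gamma\in\Gamma_0$ with $\gamma\mathcal{H}\cap\mathcal{H}'\neq\emptyset$ and $\gamma x\neq y$. Since $\gamma\mathcal{H}\subseteq\mathcal{H}_{\gamma x}$ is disjoint from $\mathcal{H}_y$, the intersection lies in $\mathcal{H}_{\gamma x}\cap\Gamma_y K_0$, so translating by a suitable $\delta\in\Gamma_y$ gives $\mathcal{H}_{\delta^{-1}\gamma x}\cap K_0\neq\emptyset$, i.e. $\xi':=\delta^{-1}\gamma x\in S_0$. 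Writing $\gamma=\delta\,g_{\xi'}\,\beta$ with $g_{\xi'}x=\xi'$ and $\beta\in\Gamma_x$, and using invariance of $\mathcal{H}$ under $\Gamma_x$ and of $\mathcal{H}'$ under $\Gamma_y$, this forces $g_{\xi'}\mathcal{H}\cap\mathcal{H}'\neq\emptyset$ for one of the finitely many $\xi'\in S_0\setminus\{y\}$. Since $\overline{\mathcal{H}'}$ meets $\partial\Omega$ only at $y$, each such $\xi'$ admits an open neighborhood in $\overline\Omega$ disjoint from $\mathcal{H}'$, while $g_{\xi'}\overline{\mathcal{H}}$ decreases to $\{\xi'\}$ as the depth of $\mathcal{H}$ grows; choosing this depth large enough to handle all $\xi'\in S_0\setminus\{y\}$ simultaneously contradicts the previous line. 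Hence this $\mathcal{H}$ has the desired property: $\gamma\mathcal{H}\cap\mathcal{H}'\neq\emptyset$ forces $\gamma x=y$.

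The main obstacle is not the packing argument but assembling the geometric-finiteness toolkit in this Hilbert-geometry setting: the existence of a locally finite, $\Gamma_0$-equivariant family of pairwise disjoint horoballs over the bounded parabolic points, each stabilizer-invariant, and the cocompactness of $\Gamma_y$ on the horospheres at $y$ — all of which I would import from the geometrically finite structure developed in Blayac--Zhu and used in the surrounding discussion. The minor wrinkle that a parabolic stabilizer may only preserve a horoball up to a uniform Busemann displacement is absorbed by shrinking or enlarging Busemann levels by a fixed constant throughout the estimate.
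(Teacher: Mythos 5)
The paper does not prove this lemma; it is cited verbatim from Blayac--Zhu \cite{2021arXiv210608079B} with only a remark explaining why their smooth-domain arguments transport to this non-smooth setting. So there is no proof in the paper to compare against.

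Evaluated on its own, your proposal has a circularity. You take as ``structural input'' a locally finite, $\Gamma_0$-equivariant family of pairwise disjoint, stabilizer-invariant horoballs over the bounded parabolic points (plus cocompactness of $\Gamma_y$ on the collar $\mathcal{H}'\setminus\mathcal{H}_y$). But the paper's own remark, placed immediately after the lemma, says it is \emph{this lemma} that ``allows the existence of a family of disjoint horoballs $\mathcal{H}_x$ centered at bounded parabolic points, which are $\Gamma_0$-equivariant and each are preserved by the parabolic subgroup $\mathrm{Stab}_{\Gamma_0}(x)$.'' In the logical order the paper (and, one infers, Blayac--Zhu) is using, the equivariant disjoint family is a \emph{consequence} of Lemma 8.11, not an independently available input; deriving the lemma from the family runs the implication backwards. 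A non-circular proof has to work from the raw data --- bounded parabolicity as cocompactness of $\Gamma_y$ on $\Lambda_\Omega(\Gamma_0)\setminus\{y\}$, projective visibility, and $C^1$-smoothness of limit points --- typically by contradiction on a shrinking sequence of $\Gamma_x$-invariant horoballs: sort the offending elements into $\Gamma_y$--$\Gamma_x$ double cosets, use $\overline{\mathcal{H}'}\cap\partial\Omega=\{y\}$ to rule out any single coset as the horoball shrinks, and use the cocompactness of $\Gamma_y$ on $\Lambda_\Omega(\Gamma_0)\setminus\{y\}$ to bound the number of cosets that can contribute. (Your appeal to cocompactness of $\Gamma_y$ on the full collar also needs care: in general this only holds after intersecting with the convex hull of the limit set, since horoballs extend far from the hull.) The depth-and-packing estimate at the end of your argument is the right kind of move, but it must be run against these primitives rather than against the disjoint family the lemma is meant to establish.
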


    \begin{rmk*}
        In \cite{2021arXiv210608079B}, Blayac--Zhu worked with smooth domains. Although we cannot assume the same for $\Omega$, we can still apply their results because all they used is the smoothness and strict convexity between points in the limit points, which is given in our case as $\Gamma_0$ is projectively visible. Moreover, this lemma allows the existence of a family of disjoint horoballs $\mathcal{H}_{x}$ centered at bounded parabolic points, which are $\Gamma_0$-equivariant and each are preserved by the parabolic subgroup $Stab_{\Gamma_0}(x)$. This will be the set of horoballs stated in the next lemma.

    \end{rmk*}

    \begin{lemma}\label{decomposition}
      For each bounded parabolic point $\alpha\in \Lambda_{\Omega}(\Gamma_0)$, fix an open horoball $\mathcal{H}_{\alpha} $ centered at $\alpha$, such that $\gamma \mathcal{H}_{\alpha}= \mathcal{H}_{\gamma \alpha}$ for all $\gamma \in \Gamma_0$, and the distinct horoballs are mutually disjoint. Then the set of unit tangent vectors $v \in S_{\Gamma_0}\Omega$ whose base point projection $\pi(v)$ does not belong to $\bigcup\{\mathcal{H}_{\alpha} \text{ }| \text{ } \alpha \text{ is bounded parabolic}\}$ is compact.
    \end{lemma}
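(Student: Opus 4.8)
The statement should be read as saying that the image of $\mathcal{K}:=\{v\in S_{\Gamma_0}\Omega \mid \pi(v)\notin\bigcup_{\alpha}\mathcal{H}_{\alpha}\}$ in the quotient $\Gamma_0\backslash S_{\Gamma_0}\Omega$ is compact; note that $\mathcal{K}$ itself is $\Gamma_0$-invariant, since the family $\{\mathcal{H}_{\alpha}\}$ is $\Gamma_0$-equivariant by construction and $S_{\Gamma_0}\Omega$ is $\Gamma_0$-invariant. The first step is a reduction: every $v\in S_{\Gamma_0}\Omega$ is tangent to a straight geodesic both of whose endpoints lie in $\Lambda_{\Omega}(\Gamma_0)$, so $\pi(v)\in\mathcal{C}_{\Gamma_0}$; and for any compact $D\subset\Omega$ the set of $v\in S_{\Gamma_0}\Omega$ with $\pi(v)\in D$ is compact, being the set of triples $(a,b,x)$ with $x\in D$, $\{a,b\}\subset\Lambda_{\Omega}(\Gamma_0)$ and $x\in(a,b)$, a closed and hence compact subset of $\Lambda_{\Omega}(\Gamma_0)\times\Lambda_{\Omega}(\Gamma_0)\times D$ (using that $(\Omega,d_H)$ is proper, $\Lambda_{\Omega}(\Gamma_0)$ is compact, and $\Gamma_0$ is projectively visible). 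So it suffices to produce a compact $D\subset\mathcal{C}_{\Gamma_0}$ with $\Gamma_0\cdot D\supseteq\mathcal{C}_{\Gamma_0}\setminus\bigcup_{\alpha}\mathcal{H}_{\alpha}$, i.e. to prove that the ``thick part'' of the convex core is $\Gamma_0$-cocompact.

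For this I would run the classical contradiction argument for geometrically finite actions, following Blayac--Zhu \cite[Section 8]{2021arXiv210608079B}. If the thick part were not cocompact there would be $x_n\in\mathcal{C}_{\Gamma_0}\setminus\bigcup_{\alpha}\mathcal{H}_{\alpha}$ with $d_H(x_n,\Gamma_0.o)\to\infty$; translating each $x_n$ by a suitable element of $\Gamma_0$ we may assume $d_H(x_n,o)\le\inf_{\gamma\in\Gamma_0}d_H(\gamma x_n,o)+1$, so $d_H(x_n,o)\to\infty$ and $x_n$ is an almost-nearest orbit point to $o$. Passing to a subsequence (compactness of $\overline{\Omega}$) we get $x_n\to\xi\in\partial\Omega$, and since $x_n\in\mathcal{C}_{\Gamma_0}$ and $\Lambda_{\Omega}(\Gamma_0)$ is closed, $\xi\in\Lambda_{\Omega}(\Gamma_0)$. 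Because $(\Gamma,\mathcal{P})$ is relatively $P_{\theta}$-Anosov, $\Gamma_0$ acts geometrically finitely on $\partial\Omega$ with limit set $\Lambda_{\Omega}(\Gamma_0)$ (Lemma \ref{equiv. conical} identifies the two notions of conical point), so $\xi$ is conical or bounded parabolic, and both cases must be excluded.

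If $\xi$ is conical, Lemma \ref{equiv. conical} produces $\{\gamma_k\}\subset\Gamma_0$ with $\gamma_k.o\to\xi$ and $\sup_k d_H(\gamma_k.o,[o,\xi))<\infty$; combined with $x_n\to\xi$ this forces some $\gamma_{k(n)}.o$ to lie within a uniformly bounded distance of $x_n$, contradicting the almost-minimality $d_H(x_n,o)\le d_H(\gamma_{k(n)}^{-1}x_n,o)+1$. If $\xi$ is bounded parabolic, write $P:=\mathrm{Stab}_{\Gamma_0}(\xi)$; then $P$ acts cocompactly on $\Lambda_{\Omega}(\Gamma_0)\setminus\{\xi\}$, and by the choice of the horoballs together with the cusp estimates of \cite[Lemma 8.11]{2021arXiv210608079B} and its proof, $P$ acts cocompactly on $\mathcal{C}_{\Gamma_0}\cap\partial\mathcal{H}_{\xi}$; since $x_n\to\xi$ but $x_n\notin\mathcal{H}_{\xi}$ the points $x_n$ stay a bounded distance from $\partial\mathcal{H}_{\xi}$, so there are $p_n\in P$ with $\{p_n x_n\}$ relatively compact, again contradicting almost-minimality. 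This yields the cocompactness of the thick part, hence the lemma. The \textbf{main obstacle} is the bounded-parabolic case: one must control how a sequence that converges to $\xi$ while avoiding $\mathcal{H}_{\xi}$ can move inside $\mathcal{C}_{\Gamma_0}$, and one must transfer the cusp-geometry estimates of Blayac--Zhu, stated there for strictly convex smooth domains, to our merely projectively visible $\Omega$; the latter is legitimate because those estimates only use strict convexity and $C^1$-smoothness along the limit set, both of which hold here.
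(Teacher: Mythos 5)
Your overall strategy matches the paper's: reduce to cocompactness of a ``thick part,'' run a contradiction argument with $x_n$ escaping to $\partial\Omega$, and split on whether the limit $\xi$ is conical or bounded parabolic. Your use of almost-nearest orbit points is morally the same as the paper's Dirichlet domain $D:=\{x\in\Omega \mid d_H(x,\gamma.o)\ge d_H(x,o)\ \forall \gamma\in\Gamma_0\}$. However, both of your case analyses contain genuine gaps.

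In the conical case you assert that ``combined with $x_n\to\xi$ this forces some $\gamma_{k(n)}.o$ to lie within a uniformly bounded distance of $x_n$.'' This does not follow: $x_n\to\xi$ says nothing about $x_n$ staying near the straight ray $[o,\xi)$, and even if it did, there is no reason a priori that $x_n$ and $\gamma_{k}.o$ approach $\xi$ at comparable rates. (Take a cusp region in $\H^2$ and let $x_n$ approach $\xi$ along a different vertical line than the orbit points; both tend to $\xi$ but are mutually far apart.) The argument that actually closes this case, and that your almost-minimality condition is tailor-made for, is a Busemann function estimate: the Dirichlet-type inequality $d_H(x_n,\gamma.o)\ge d_H(x_n,o)$ gives $\beta_{x_n}(o,\gamma.o)\le 0$ for every $\gamma\in\Gamma_0$, hence $\beta_\xi(o,\gamma_k.o)\le 0$ for all $k$; but conical convergence of $\gamma_k.o$ along $[o,\xi)$ forces $\beta_\xi(o,\gamma_k.o)\to+\infty$. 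You have the right hypotheses in hand but draw the contradiction from a claim that isn't true.

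The bounded-parabolic case has a similar issue. You claim ``since $x_n\to\xi$ but $x_n\notin\mathcal{H}_\xi$ the points $x_n$ stay a bounded distance from $\partial\mathcal{H}_\xi$.'' This is false without further input: points outside a horoball approaching the center can recede arbitrarily far from the horosphere. Some extra structure must be invoked, and the paper does so by working not with the whole convex core but with the union of bi-infinite geodesics $(\beta,\eta)$ with endpoints in $\Lambda_\Omega(\Gamma_0)$ (which is what $S_{\Gamma_0}\Omega$ actually projects to): for each $x_n$ one chooses an endpoint $\eta_n$ so that $[x_n,\eta_n)$ misses $\mathcal{H}_\xi$, translates by $\gamma_n\in\mathrm{Stab}_{\Gamma_0}(\xi)$ so $\gamma_n\eta_n$ stays away from $\xi$, and then derives the contradiction $\lim_n d_H(o,\gamma_n.o)\le 2\langle x,\xi\rangle_o<\infty$, again exploiting $x_n\in D$. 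You should replace your ``bounded distance from the horosphere'' step with this Gromov-product argument, or otherwise justify cocompactness of $P$ on the relevant slice via the cusp estimates rather than asserting it.
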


        \begin{proof}
             The proof is mainly based on the proof of Lemma 8.12 in \cite{2021arXiv210608079B}. Define $$D:= \{x \in \Omega \quad| \quad d_H(x, \gamma.o) \geq d_H(x, o) \text{ for all }    \gamma \in \Gamma_0\} ,$$ then showing the set $$A:= D \bigcap \left( \bigcup_{\beta ,\eta \in \Lambda_{\Omega}(\Gamma_0)} (\beta, \eta)\right) \backslash \left( \bigcup\mathcal{H}_{\alpha}\right)$$ is compact suffices to prove the lemma, because the subset of interest can be seen as a bundle over $A$ with compact fibers.

            Assume the contrapositive that $A$ is not compact, then by Lemma \ref{equiv. conical} there exist a unbounded sequence $\{x_n\} \subset A$, and, up to taking subsequence, $x_n$ converges to some $\alpha \in \Lambda_{\Omega}(\Gamma_0)$. Since $\Gamma_0$ acts geometrically finitely on $\p \Omega$, $a$ is either conical or bounded parabolic. 

            If $\alpha$ is conical (in the convex domain sense), then there exists a sequence $\{\gamma_n\} \subset \Gamma_0$ such that $\sup_{n \geq 1} d_{H}(\gamma_n. o, [o, \alpha)) < +\infty$ and $\gamma_n.o$ converges to $\alpha$. However, this would imply that $$\infty = \lim_{n \rightarrow \infty} \beta_{\alpha}(o, \gamma_n .o): =\lim_{n \rightarrow \infty} \lim_{k \rightarrow \infty} \beta_{x_k}(o, \gamma_n.o)= \lim_{n \rightarrow \infty} \lim_{k \rightarrow \infty} d_H(x_k, o) -d_H(x_k, \gamma_n.o) \leq 0, $$ which gives us a contradiction. So $\alpha$ has to be bounded parabolic. 
            
            By construction of $A$, there exists $\{\zeta_n\}$ and $\{\eta_n\}$ in $\Lambda_{\Omega}(\Gamma_0)$ such that $x_n \in (\eta_n,\zeta_n)$ and $x_n \nin \mathcal{H}_{\alpha}$. Since $\mathcal{H}_{\alpha}$ is convex (because $\Omega$ is a properly convex domain), the intersection $\mathcal{H}_{\alpha}\cap(\eta_n,\zeta_n)$ is connected. Then up to exchanging $\eta_n$ and $\zeta_n$, we can assume $[x_n, \eta_n)$ is disjoint from the open horoball $\mathcal{H}_{\alpha}$. After picking subsequences, we can also assume that $\zeta_n \rightarrow \zeta$ and  $\eta_n \rightarrow \eta \in \Lambda_{\Omega}(\Gamma_0)$. 
            
            Since $\alpha$ is bounded parabolic, we can also pick a diverging sequence $\{\gamma_n\} \subset Stab_{\Gamma_0}(\alpha)$ such that $\gamma_n\eta_n $ converges to  $\Bar{\eta} \neq \alpha$. This is possible because $Stab_{\Gamma_0}(\alpha)$ act cocompactly on $\Lambda_{\Omega}(\Gamma_0)\backslash \{\alpha\}$ and $\eta_n\neq \alpha$ for infinitely many $n$ since $\mathcal{H}_{\alpha}\cap [x_n,\eta_n)=\emptyset$ by construction. Note that $\beta_{\alpha}$ is preserved under the action of $Stab_{\Gamma_0}(\alpha)$, so for all $   \gamma_n \in Stab_{\Gamma_0}(\alpha)$, we have $$[\gamma_nx_n, \gamma_n \eta_n) \cap \mathcal{H}_{\alpha} =\emptyset.$$ By taking a subsequence, we can assume that $\gamma_n x_n$ converges to $x \in \overline{\Omega}$. We claim that $x \neq \alpha$. If false, then $[\gamma_n x_n , \gamma_n  \eta_n)$ converges to $(\alpha, \eta)$ that's also disjoint from $\mathcal{H}_{\alpha}$. This gives us a contradiction, 
            because $(\alpha, \eta) \subset\Omega$ and thus has to intersect all the horoballs centered at $\alpha$. Then 
            \begin{equation*}
                \begin{split}
                \infty = \lim_{n\rightarrow \infty} d_H(o, \gamma_n. o) & \leq  \lim_{n\rightarrow \infty} \left[ d_H( x_n, \gamma_n^{-1} o) +d_H(\gamma_n o, o)-d_H( x_n , o)\right] \\
                & = \lim_{n\rightarrow \infty} \left[ d_H( \gamma_n x_n,  o) +d_H(\gamma_n o, o)-d_H( \gamma_n  x_n , \gamma_n  o)\right]\\
                & = \left< x,\alpha \right>_o <\infty
            \end{split}
            \end{equation*}
            where $\left<\text{ },\text{ }\right>_o$ is the Gromov product based at $o$, and it's finite because $(x,\alpha)$ intersects $\Omega$. This gives us a contradiction and thus $\alpha$ can not be bounded parabolic either, so $\{x_n\} \subset A$ is bounded, hence $A$ is compact.     
        \end{proof}

    \begin{proof}[Proof of Proposition~\ref{finiteness of m}]
    After having the decomposition of $S_{\Gamma_0}\Omega$ in Lemma \ref{decomposition}, all that is left to show is that $\bar{m}$ is finite on each cusp.

    For any $\eta \in \Lambda_{\Omega}(\Gamma_0)$ bounded parabolic, take the horoball $\mathcal{H}_{\eta}$ defined as above, and $P:= Stab_{\Gamma_0}(\eta) \in \mathcal{P}$ the corresponding parabolic subgroup fixing $\eta$ and $\mathcal{H}_{\eta}$. Let $\mathcal{C}_P $ be a strict fundamental domain of the action of $P$ on $\mathcal{H}_{\eta}$, that is, for any $x\in \mathcal{H}_{\eta}$, there exists a unique $p \in P$ such that $px \in \mathcal{C}_P$. Moreover, since $\eta$ is bounded parabolic, we can choose a relatively compact strict fundamental domain $F$ for the action of $P$ on $\Lambda_{\Omega}(\Gamma_0) \backslash \{\eta\}$. 

    Since the $\phi$-Patterson--Sullivan density $\mu$ we used to obtain the Bowen--Margulis--Sullivan measure $m$ has no atom \cite[Proposition 9.1]{2022arXiv220104859C}, we can write the measure of the quotient of the horoball as
    \begin{equation}\label{10}
        \bar{m}\left( \Gamma_0 \backslash S\mathcal{H}_{\eta} \right) = \sum_{p,q \in P} \int_{pF \times qF} e^{-\delta_{\phi}\phi([\xi(x), \xi(y)]_{\theta})} d\Bar{\mu}(\xi(x)) d\mu(\xi(y)) \cdot \int_{(x,y) \cap \mathcal{C}_P} ds.
    \end{equation}

    Because $m$ is $\Gamma_0$ invariant, so is $\bar{m}$, we can then rewrite the equation above as
    \begin{equation}\label{11}
    \begin{split}
        \bar{m}\left( \Gamma_0 \backslash S\mathcal{H}_{\eta} \right) &= \sum_{p,q \in P} \int_{F \times p^{-1}qF} e^{-\delta_{\phi}\phi([\xi(x), \xi(y)]_{\theta})}d\Bar{\mu}(\xi(x)) d\mu(\xi(y))
        \cdot \int_{(x,y) \cap p^{-1}\mathcal{C}_P} ds\\
        & = \sum_{p \in P} \int_{F \times pF} e^{-\delta_{\phi}\phi([\xi(x), \xi(y)]_{\theta})} d\Bar{\mu}(\xi(x)) d\mu(\xi(y))
        \cdot \int_{(x,y) \cap \mathcal{H}_{\eta}} ds,
    \end{split}
    \tag{\ref{10}*}
    \end{equation}
    where the second equality is because $\mathcal{C}_P$ is a strict fundamental domain for the action of $P$ on $\mathcal{H}_{\eta}$ and the action is transitive.

    Geometrically, the second term in the product measures how much time the geodesic $(x,y)$ spends in $\mathcal{H}_{\eta}$, and we show this is bounded.

    \begin{claim}
        There exists an open neighborhood $U \subset \Bar{\Omega}$ of $\eta$ such that for all $   \zeta \in F$, all $    w \in U$, we have $[w,\zeta) \cap \mathcal{H}_{\eta} \neq \emptyset$
    \end{claim}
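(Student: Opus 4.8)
The plan is to combine compactness of $\overline{F}$ with continuity of projective segments. First I would record that, since $F$ is relatively compact in $\Lambda_{\Omega}(\Gamma_0)\setminus\{\eta\}$, its closure $\overline{F}$ in $\overline{\Omega}$ is compact and does not contain $\eta$. For each $\zeta\in\overline{F}$, both $\zeta$ and $\eta$ lie in $\Lambda_{\Omega}(\Gamma_0)$ and are distinct, so projective visibility of $\Gamma_0$ gives $(\eta,\zeta)\subset\Omega$; parametrizing $(\eta,\zeta)$ by arclength with parameter running to $+\infty$ toward $\eta$, the value $\beta_{\eta}(x_{\eta},\cdot)$ along this ray tends to $+\infty$. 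Here one uses that points of $\Lambda_{\Omega}(\Gamma_0)$ are $C^1$-smooth, so $\pi_h^{-1}(\eta)$ is the single point $\beta_{\eta}$, together with Walsh's theorem: the horofunctions of points running to $\eta$ along the ray subconverge to a horofunction projecting to $\eta$, hence to $\beta_{\eta}$. Consequently $(\eta,\zeta)$ eventually enters the prescribed horoball $\mathcal{H}_{\eta}$; fix a point $p_{\zeta}\in(\eta,\zeta)\cap\mathcal{H}_{\eta}$, which is an interior point of $\Omega$.

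Next, since $\mathcal{H}_{\eta}$ is open I would choose $\varepsilon_{\zeta}>0$ with the Hilbert ball $B(p_{\zeta},\varepsilon_{\zeta})\subset\mathcal{H}_{\eta}$. The assignment sending a pair of points of $\overline{\Omega}$ to the projective segment joining them inside the fixed affine chart is continuous off the diagonal, and $\eta\neq\zeta$, so there are open neighborhoods $U_{\zeta}\ni\eta$ and $V_{\zeta}\ni\zeta$ in $\overline{\Omega}$ such that $[w,\zeta']$ meets $B(p_{\zeta},\varepsilon_{\zeta})$ for all $w\in U_{\zeta}$ and $\zeta'\in V_{\zeta}$; since $B(p_{\zeta},\varepsilon_{\zeta})\subset\mathcal{H}_{\eta}\subset\Omega$ consists of interior points, the corresponding intersection point is distinct from the removed endpoint, so $[w,\zeta')\cap\mathcal{H}_{\eta}\neq\emptyset$ as well. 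The family $\{V_{\zeta}\}_{\zeta\in\overline{F}}$ covers the compact set $\overline{F}$; extracting a finite subcover $V_{\zeta_1},\dots,V_{\zeta_N}$ I would set $U:=\bigcap_{i=1}^{N}U_{\zeta_i}$. Then for every $w\in U$ and every $\zeta\in F$, choosing $i$ with $\zeta\in V_{\zeta_i}$ yields $[w,\zeta)\cap\mathcal{H}_{\eta}\supseteq[w,\zeta]\cap B(p_{\zeta_i},\varepsilon_{\zeta_i})\neq\emptyset$, which is exactly the assertion.

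The main obstacle I anticipate is the first step: justifying cleanly that a geodesic segment landing at $\eta$ must eventually enter the given horoball centered there. This needs care about which sequence of horofunctions one takes the limit of, and it is precisely where $C^1$-smoothness of limit points (injectivity of $\pi_h$ over $\eta$) and Walsh's theorem enter, to identify the limiting horofunction with $\beta_{\eta}$. Once this is settled, the rest is a routine compactness-and-continuity argument; the only other point to keep in mind is that $w$ may lie on $\partial\Omega$, but this causes no difficulty since the argument only ever uses that the segment $[w,\zeta)$ passes through an interior point of $\mathcal{H}_{\eta}$.
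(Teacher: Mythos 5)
Your proof is correct and relies on the same two ingredients as the paper's: compactness of $\overline{F}$ and the fact that any geodesic ray in $\Omega$ landing at $\eta$ must eventually enter $\mathcal{H}_{\eta}$. The paper packages this as a short contradiction argument (extract sequences $w_n\to\eta$, $\zeta_n\to\zeta\in\overline{F}$ with $[w_n,\zeta_n)\cap\mathcal{H}_{\eta}=\emptyset$, pass to the limit geodesic $(\eta,\zeta)$, and contradict), whereas you give the direct contrapositive version: pointwise intersection, a Hilbert-ball of safety, continuity of segments off the diagonal, and a finite subcover. The two are logically equivalent and of comparable length; the one genuine value your write-up adds is that you actually justify the step the paper takes for granted --- that $(\eta,\zeta)$ must eventually enter a prescribed horoball centered at $\eta$ --- by identifying the limit horofunction with $\beta_{\eta}$ via $C^1$-smoothness of limit points (single-valuedness of $\pi_h^{-1}$ over $\eta$) and Walsh's theorem. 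That identification is exactly the right justification and is implicit but unstated in the paper.
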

\begin{claimproof}
    Assume the claim fails, then there exists a sequence of shrinking open neighborhoods $\{U_n\}$ of $\eta$ such that there exist $ \{w_n \in U_n\}$ and $\{\zeta_n\in F\} $ with $[w_n, \zeta_n) \cap \mathcal{H}_{\eta} = \emptyset$. Because $F$ is relatively compact and $U_n$ are shrinking, $\zeta_n \rightarrow \zeta \in \Bar{F}$ and $w_n \rightarrow \eta$. This would then give us a contradiction because $(\zeta, \eta)$ would be a geodesic line that doesn't intersect $\mathcal{H}_{\eta}$.   
\end{claimproof}

With this neighborhood $U$, we can define $R:= d_H(o, \p \mathcal{H}_{\eta}\backslash U) < \infty$. Fix $x \in F$ and $y \in pF$ such that $(x,y)\cap \mathcal{H}_{\eta}\neq \emptyset.$ Let $a,b$ be the unique intersection points of $(x,y)$ and $\p\mathcal{H}_{\eta}$ such that the ordering of them on $(x,y)$ is $x,a,b,y$. 
By construction, $[a,x)$ and $[b,y)$ are disjoint from the open horoball $\mathcal{H}_{\eta}$, hence $a $ and $p^{-1}b$ are not in $U$. We then have $d_H(a,o)\leq R, d_H(p^{-1}.b,o)= d_H(b,p.o)\leq R$, and we can bound the second integral in \eqref{11} from above
$$\int_{(x,y)\cap \mathcal{H}_{\eta}}ds \leq 2R+d_H(o,po).$$

We can find an open neighborhood $V \in \Omega \cup \p \Omega$ of $\Bar{F}$, such that $\eta \notin V$. Since the pair $(\Gamma, \mathcal{P})$ is relative hyperbolic, the Bowditch boundary $\p (\Gamma, \mathcal{P}) \simeq \Lambda_{\Omega}(\Gamma_0)$ is a compact perfect metrizable space, hence $\Bar{F} \times \left( \Lambda_{\Omega}(\Gamma_0) \backslash V \right) $ is also compact. Because $\xi$, $\phi$, and the Gromov product are both continuous and defined on this space, as it doesn't contain any non transverse pair, $\phi([\xi(x), \xi(y)]_{\theta})$ is bounded on $\Bar{F} \times \left( \Lambda_{\Omega}(\Gamma_0) \backslash V \right) $, i.e. there exist $C_1, C_2$ such that $C_1 \leq \phi([\xi(x), \xi(y)]_{\theta}) \leq C_2$ for $(x,y) \in \Bar{F} \times \left( \Lambda_{\Omega}(\Gamma_0) \backslash V \right)$ and $x \neq y$, and Equation \eqref{11} turns into
\begin{equation*}
    \bar{m}\left( \Gamma_0 \backslash S\mathcal{H}_{\eta} \right) \leq \exp{(-C_1)}\cdot \mu_o(F) \sum_{p\in P}   \left( 2R +d_H(o,p.o) \right) \cdot \mu_o(pF).
\end{equation*}
Note here that we are able to rewrite the whole sum because although $e^{-\delta_{\phi}\phi([\xi(x), \xi(y)]_k)}$ is not uniformly bounded below on $\Bar{F} \times V$, we can omit all the terms where $(x,y) \in \Bar{F} \times V $ by choosing the horoballs $\mathcal{H}_{\eta}$ in Lemma \ref{decomposition} to be small enough at the beginning so that $(x,y) \cap \mathcal{H}_{\eta} = \emptyset $ 
 and with the same geometric interpretation of the second integral, these terms does not contribute to the sum.
 
Now we just have to find ways to control $\mu_{o}(pF)$. Since $\Bar{F}$ and $P.o \cup \{\eta\}$ are disjoint and compact, and for all $   x\in \Bar{F}, y \in P.o \cup \{\eta\}$, $[x,y] \cap \Omega$, then there exist $R' > 0$ such that $[x,y] \cap B_{R'}(o) \nequal \emptyset$. This implies that $F \subset \mathcal{O}_{R'}(p.o, o)$ for all $   p \in P$, equivalently $pF \subset \mathcal{O}_{R'} (o, p.o)$ for all $   p \in P$. Hence, applying a shadow lemma in this setting \cite[Proposition 7.1]{2022arXiv220104859C}, there exist a constant $C$ such that  $$\mu_o\left(\xi(\mathcal{O}_{R'}(o, p.o)\cap \Lambda_{\Omega}(\Gamma))\right) \leq C\cdot \exp{(-\delta_{\phi} \phi(\kappa(p)))}$$ for all $   p \in P$. 

Then we have
\begin{equation*}
    \bar{m}\left( \Gamma_0 \backslash S\mathcal{H}_{\eta} \right) \leq C' \sum_{p\in P}   \left( 2R +d_H(o,p.o) \right) \cdot \exp{(-\delta_{\phi} \phi(\kappa(p)))}
\end{equation*}
with $C' = C \cdot \exp{(-C_1)}\cdot \mu_o(F) $.

Theorem 10.1 in \cite{2023arXiv230804023C} states that for any $\phi \in \mathfrak{a}_{\theta}^*$ and $   p \in \Gamma$, $\phi (\kappa(p))$ is quasi-isometric to $d_X(p,e)$, and thus to $d_H(g.o,o)$ by property (4) of Theorem \ref{$P_k$ rep general}. Up to changing some constants, the equation above turns into
$$\bar{m}\left( \Gamma_0 \backslash S\mathcal{H}_{\eta} \right) \leq C' \sum_{p\in P}   \exp{\{-\delta_{\phi}\cdot\left[\phi(\kappa(p))- \epsilon_0 \log (\phi(\kappa(p)))\right] \}}$$ for some $\epsilon_0>0$. Since the logarithm function is sublinear, then as argued before, it does not affect the convergence of the sequence. Then the series is bounded by the Poincare series of $P$, $Q_P^{\phi}(\delta_{\phi}\cdot(1-\epsilon')) = \sum_{p \in P} \exp (\delta_{\phi}\cdot(1-\epsilon')\cdot \phi(\kappa(p))) $ for all $\epsilon' >0$. By Theorem 7.1 and Corollary 7.2 in \cite{2023arXiv230804023C}, there is an entropy drop of peripheral subgroups $P\in \mathcal{P}$ of the relative pair $(\Gamma, \mathcal{P})$, i.e. $\delta_{\phi}(\Gamma) >\delta_{\phi} (P)$. Then for $\epsilon'$ small enough, $Q_P^{\phi}(\delta_{\phi}(\Gamma)\cdot(1-\epsilon'))$ converges, hence $\bar{m}\left( \Gamma_0 \backslash S\mathcal{H}_{\eta} \right)$ is finite. This concludes the proof of Proposition \ref{finiteness of m}, and Theorem \ref{genericity in rel Anosov} follows from Theorem \ref{genericity}.\end{proof}

\section{A Geometric Interpretation}\label{geometric interpretation}

Let $X$ be a higher rank symmetric space defined in Section \ref{prelim} and $\theta$ a symmetric subset of the set of simple roots. As mentioned in Section \ref{Morse bdy}, the higher rank Morse lemma in \cite{2014arXiv1411.4176K} links regularity of a quasi-geodesic ray to the existence of a uniform upper bound on its distance to a certain Weyl cone. We want to generalize this result to sublinear rays with the following property.

\begin{defn}
    A path $c:I\rightarrow X$ is \emph{$(q, \chi) $ $P_{\theta}$-sublinearly Morse} if there exist a constant $q>0$ and a sublinear function $\chi$ such that for all $s, t \in I$ and all $\alpha \in \theta$ we have $$\alpha(\kappa(c(s)^{-1}c(t))) \geq q\cdot d_X(c(s), c(t))-\chi(\max\{d_X(c(0),c(s)), d_X(c(0), c(t))\}) .$$
\end{defn}

We can show that given a $P_{\theta}$-sublinearly Morse sequence $\{g_n\}$, the concatenated sublinear ray as defined in property (2) in Definition \ref{sublin. Morse} is $P_{\theta}$-sublinearly Morse. 

\begin{lemma}\label{sublin Morse path}
    Let $c$ be the $(C,\Bar{\eta})$ sublinear ray associated to $\{g_n\}$, and $a$ the constant in property (3) of Definition \ref{sublin. Morse}. There exists $\bar{\eta}'=O(\bar{\eta})$ such that $c$ is $(a, \bar{\eta}')$ $P_{\theta}$-sublinearly Morse.
\end{lemma}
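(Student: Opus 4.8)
The plan is to verify the defining inequality of an $(a,\bar\eta')$ $P_\theta$-sublinearly Morse path directly, by replacing the interior points $c(s),c(t)$ of the broken geodesic with the nearest nodes $g_iK,g_jK$ and absorbing every resulting error into a single sublinear function built from the data of $\{g_n\}$. Fix $s\le t$ in the domain of $c$ and $\alpha\in\theta$. Since $\theta$ is symmetric and $\kappa(c(s)^{-1}c(t))=\iota(\kappa(c(t)^{-1}c(s)))$ with $\alpha\circ\iota=\iota^*\alpha\in\theta$, it is enough to treat $s\le t$; set $m:=\max\{d_X(c(0),c(s)),d_X(c(0),c(t))\}$ (recall $c(0)=g_0K$), and let $i\le j$ be the indices with $c(s)\in[g_iK,g_{i+1}K]$ and $c(t)\in[g_jK,g_{j+1}K]$.

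First I would record the ``snapping'' estimates. By property (1) of Definition~\ref{sublin. Morse}, $d_X(c(s),g_iK)\le d_X(g_iK,g_{i+1}K)\le\eta(d_X(g_0K,g_iK))$, so by Fact~\ref{relating eta} (with $x=g_iK$, $y=c(s)$, $o=g_0K$) this is $\le d_2\,\eta(d_X(g_0K,c(s)))\le d_2\,\eta(m)$; symmetrically $d_X(c(t),g_jK)\le d_2\,\eta(m)$, and the same argument gives $\eta(d_X(g_0K,g_jK))\le d_2\,\eta(m)$. On the main case $i<j$, applying the triangle inequality for $d_\Delta$ recorded in Section~\ref{Lie gp} twice, once in each argument, to pass from the pair $(c(t),c(s))$ to the pair $(g_jK,g_iK)$, and then using $|\alpha(v)|\le\|\alpha\|\,\|v\|$, yields
$$|\alpha(\kappa(c(s)^{-1}c(t)))-\alpha(\kappa(g_i^{-1}g_j))|\le c\,\|\alpha\|\,(d_X(c(s),g_iK)+d_X(c(t),g_jK))\le 2c\,\|\alpha\|\,d_2\,\eta(m).$$
Next, property (3) of Definition~\ref{sublin. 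Morse} (with base index $i$ and gap $j-i$) gives $\alpha(\kappa(g_i^{-1}g_j))\ge a\,d_X(g_iK,g_jK)-\eta'(d_X(g_0K,g_jK))$, while $d_X(g_iK,g_jK)\ge d_X(c(s),c(t))-d_X(c(s),g_iK)-d_X(c(t),g_jK)\ge d_X(c(s),c(t))-2d_2\,\eta(m)$ and $\eta'(d_X(g_0K,g_jK))\le C''\eta(d_X(g_0K,g_jK))\le C''d_2\,\eta(m)$ since $\eta'=O(\eta)$. Combining these three estimates produces $\alpha(\kappa(c(s)^{-1}c(t)))\ge a\,d_X(c(s),c(t))-\bar\eta'(m)$ for a fixed multiple $\bar\eta'$ of $\eta$, and taking the maximum over the finitely many $\alpha\in\theta$ preserves sublinearity. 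The case $i=j$ is immediate: then $d_X(c(s),c(t))\le d_X(g_iK,g_{i+1}K)\le d_2\,\eta(m)$, so, $\alpha$ being nonnegative on $\mathfrak{a}^+$, we get $\alpha(\kappa(c(s)^{-1}c(t)))\ge0\ge a\,d_X(c(s),c(t))-a\,d_2\,\eta(m)$, again of the desired form. Throughout, additive constants are harmless and may be absorbed into a sublinear function, as discussed in Section~\ref{sublin. ray}.

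I expect the only genuine difficulty to be bookkeeping rather than conceptual: every error appears naturally as $\eta$ or $\eta'$ evaluated at a distance to a node $g_iK$ or $g_jK$, and each must be rewritten in terms of $d_X(c(0),c(s))$ or $d_X(c(0),c(t))$ — which is precisely what Fact~\ref{relating eta}, together with the monotonicity and concavity of the sublinear functions from Section~\ref{sublin. ray}, accomplishes — while keeping the constants and the choice of $\alpha\in\theta$ uniform. The conclusion $\bar\eta'=O(\bar\eta)$ as stated in Lemma~\ref{sublin Morse path} then follows under the standing convention that the sublinear functions $\eta,\bar\eta,\eta'$ in Definition~\ref{sublin. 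Morse} may be taken equal to a common sublinear upper bound.
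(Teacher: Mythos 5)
Your proof is correct and follows essentially the same route as the paper: snap $c(s),c(t)$ to the nearest nodes $g_iK,g_jK$, transfer the error via the triangle inequality for $d_\Delta$, apply property (3) of Definition~\ref{sublin. Morse}, and convert the distances back. You are in fact somewhat more explicit than the paper in two places — using Fact~\ref{relating eta} to rewrite $\eta(d_X(g_0K,g_iK))$ and $\eta'(d_X(g_0K,g_jK))$ in terms of $\max\{d_X(c(0),c(s)),d_X(c(0),c(t))\}$, which is the argument the definition actually requires, and handling the degenerate case $i=j$ — both of which the paper leaves implicit; your closing remark on $\bar\eta'=O(\bar\eta)$ versus $O(\eta)$ matches the paper's own implicit convention.
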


\begin{proof}
    Let $g_s$, $g_t$ be such that $c(s)\in [g_s,g_{s+1}]$ and $c(t)\in [g_t, g_{t+1}]$. By triangle inequality and property (1) of the sequence $g_n$ $$||d_{\Delta}(c(s), c(t))-d_{\Delta}(g_s, g_t)||\leq 2\eta\left(\max\{d_X(g_0,g_s),d_X(g_0,g_t)\}\right).$$
This implies that there exists a constant $B$ such that for all $\alpha \in \theta$  $$\alpha(\kappa(c(s)^{-1}c(t))) \geq \alpha(\kappa(g_s^{-1}g_t))-B\cdot \eta\left(\max\{d_X(g_0,g_s),d_X(g_0,g_t)\}\right),$$
     then by property (3) of $\{g_n\}$ we have
       $$ \alpha(\kappa(c(s)^{-1}c(t)))\geq a\cdot d_X(g_s,g_t)-(\eta'+B\cdot \eta)\left(\max\{d_X(g_0,g_s),d_X(g_0,g_t)\}\right). $$
       By triangle inequality again, and the fact that $\eta'=O(\eta)$, we can find another sublinear function $\bar{\eta}'=O(\eta)$ such that $$\alpha(\kappa(c(s)^{-1}c(t))\geq a\cdot d_X(c(s),c(t))-\bar{\eta}'\left(\max\{d_X(g_0,g_s),d_X(g_0,g_t)\}\right)$$ and thus finishes the proof.    
\end{proof}

\subsection{Diamonds, Weyl cones, and Parallel Sets}
In order to state our theorem precisely, we need to define diamonds, Weyl cones, and parallel sets in $X$. One can view them as the higher rank analogies of geodesic segments, rays, and paths in hyperbolic spaces. 

Recall that in Section \ref{flag mfld}, we defined a map $U_{\theta}:G \rightarrow \mathcal{F}_{\theta}$ such that $U_{\theta}(g)=m_gP_{\theta}$. Let $U\subset G$ consist of points where $U_{\theta}$ is uniquely defined, i.e. for all $g \in U$ and $\alpha\in \theta$, $\alpha(\kappa(g))>0.$

\begin{defn}\label{Weyl cone}
    A \emph{Weyl cone} associated to some $\zeta \in \mathcal{F}_{\theta}$ with tip at $g\in X$, denoted by $V(g, \zeta)$, is defined as $$V(g, \zeta) :=\{h \in X\text{ }|\text{ } g^{-1}h \in U \text{ and }U_{\theta}(g^{-1}h)=\zeta\}.$$
\end{defn}

We can also define a parallel set in $X$ associated to a pair of opposite flags. 
A pair of flags $(\zeta^+, \zeta^-)\in \mathcal{F}_{\theta}\times \mathcal{F}_{\theta}$ is \emph{opposite }if there exists $g\in G$ such that $(\zeta^+, \zeta^-)=(gP_{\theta},gP_{\theta}^{opp})$. For $\mathfrak{a}_{\theta}^+$ as defined in Section \ref{Lie gp}, let $H\in \mathfrak{a}_{\theta}^+$, then we obtain a bi-infinite geodesic $l:(-\infty,\infty)\rightarrow X$ of the form $$l(t)= g\exp(tH).$$ We will use $l(\pm \infty)$ to denote the points that $U_{\theta}(l(t))$ converges to in $\mathcal{F}_{\theta}$ as $t \rightarrow \pm \infty$.

For a pair of opposite flags $(\zeta^+, \zeta^-)$, we can define the parallel set associated to it. Let $l$ be the bi-infinite geodesic as above.
\begin{defn}\label{parallel set}
A \emph{parallel set} associated to $(\zeta^+, \zeta^-)$, denoted by $P(\zeta^+, \zeta^-)$, is defined to be the union of all maximal flats in $X$ that contains $l.$ 
\end{defn}

\begin{defn}\label{diamond}
    For any $x\neq y \in X$ with $x^{-1}y\in U$, we can define the \emph{diamond} with tips at $x$ and $y$, denoted by $\Diamond(x,y)$, to be the collection of points $z\in X$ such that $U_{\theta}(z^{-1}x)$ and $U_{\theta}(z^{-1}y)$ are opposite.
\end{defn}

\begin{rmk*}\label{equiv. KLP}
    In Kapovich--Leeb--Porti \cite{2014arXiv1411.4176K}, the diamonds, Weyl cones, and parallel sets are defined using the spherical building structure on $\p_{\infty}X$, the visual boundary of $X$. By fixing a spherical model apartment in $\p_{\infty}X$ and a model chamber $\sigma_{mod}$ in this apartment defined with respect to the Weyl group action, they construct a type map $\theta_{type}:\p_{\infty}X\rightarrow \sigma_{mod}$. Since the $G$-orbit of a point in $\p_{\infty}X$ intersects the model chamber $\sigma$ exactly once, the type map $\theta_{type}$ is well defined.

    For any face $\tau \in \sigma_{mod}$, they defined the open stars in $\sigma_{mod}$ associated to $\tau$ to be the union of all open faces in $\sigma_{mod}$ whose closure contains $\tau_{mod}$. If we pick $\tau_{mod} $ defined by the intersection of the reflection hyperplanes for all $\beta \notin \theta$, the connected components of the pre-image of the open star of $\tau_{mod}$ under $\theta_{type}$ one to one correspond to the visual boundary of pre-image of $\zeta$ under $U_{\theta}(\cdot)$ for all $\zeta \in \mathcal{F}_{\theta}.$ To abuse notation, we will use $\zeta $ to denote both the element in $\mathcal{F}_{\theta}$ and the simplex in the corresponding connected component with type $\tau_{mod}$. With this identification, one can show that our definitions of diamonds, Weyl cones, and parallel sets agree with those in \cite{2014arXiv1411.4176K}. 

    Moreover, regularity of a geodesic segment $[x,y]$ in \cite{2014arXiv1411.4176K} means $x^{-1}y\in U$, and uniform regularity translates to there exist $c >0$ such that for all $\alpha \in \theta$ $$\frac{\alpha(\kappa(x^{-1}y))}{d_X(x,y)}>c.$$ we will use $c-$regular to describe segments that satisfy the above inequality.
\end{rmk*}

\subsection{Asymptotic Cones} 
Another major tool used by Kapovich--Leeb--Porti in \cite{2014arXiv1411.4176K} is the asymptotic cone of symmetric spaces. We only introduce here the basic construction and properties of the asymptotic cone we need to prove Theorem \ref{sublin. Morse lemma}. We refer the reader to Sections 2.7 and 3.9 in \cite{2014arXiv1411.4176K} for a thorough discussion on the object.

Let $\omega$ be a non-principal ultrafilter on the set of natural numbers $\N$. Let $\lambda_n$ be a sequence of positive numbers whose $\ulim $ is 0. Let $\star_n$ be a sequence of points in $X$. Then we can define \emph{asymptotic cone } of $X$, denoted by $X_{\omega}$, with respect to $\star_n$ and $\lambda_n$ to be the ultralimit of the sequence of pointed spaces $(X,\star_n)$ equipped with a distance function defined as $$d_n=\lambda_n d_X.$$

A theorem of Kleiner--Leeb \cite[Chapter 5]{PMIHES_1997__86__115_0} says that $X_{\omega}$ is a Euclidean building of the same rank and type as the original symmetric space $X$. Let $F= \ulim_n \lambda_n F_n$, where $F_n$ are the maximal flats in $X_n$ associated to the Cartan subalgebra. $F$ is a maximal flat in $X_{\omega} $ based at $e_{\omega}$, then we can define a ``Cartan projection" $\kappa_{\omega}$ from $X_{\omega}$ to the tangent space of $F$ based at $e_{\omega}$ where for all $x=(x_n),y=(y_n) \in X_{\omega}$, $$\kappa_{\omega}(x^{-1}y):= \ulim \lambda_n \kappa(x_n^{-1}y_n).$$ 
With $\kappa_{\omega}$ we can analogously define a subset $U_{\omega}\subset X_{\omega}$ such that $x=(x_n)\in U_{\omega}$ if for all $\alpha \in \theta$ $$\alpha(\kappa_{\omega}(x))= \ulim_n 
 \text{ }\alpha(\lambda_n \kappa(x_n))>0.$$

The following proposition shows that if we have a sequence of $(q,\chi_1)$ $P_{\theta}$-sublinear $(k,\chi_2)$-sublinear rays $c_n$ such that $c_n(0)=\star_n$, then the rescaled path $\lambda_nc_n$ where $$\lambda_nc_n(t):= c_n(\lambda_n^{-1}t)$$ ultraconverge to a bilipschitz ray $c_{\omega}:=\ulim_n \lambda_nc_n$ in $X_{\omega}$ with some regularity. 

\begin{prop}\label{unif reg of c}
    The ultralimit $c_{\omega}$ is a $k$-bilipschitz ray and satisfies the following: for all $0\leq s<r <\infty$, the geodesic segment $[c_{\omega}(s),c_{\omega}(r)]$ is $q$-regular, i.e. for all $\alpha \in \theta$ $$\alpha(\kappa_{\omega}(c_{\omega}(s)^{-1}c_{\omega}(r))) \geq q\cdot d_{\omega}(c_{\omega}(s),c_{\omega}(r)).$$
    
\end{prop}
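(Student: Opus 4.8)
The plan is to unwind the two defining inequalities (the $(q,\chi_1)$ $P_\theta$-sublinear Morse condition and the $(k,\chi_2)$-sublinear ray condition) under the rescaling $c_n\mapsto \lambda_n c_n$ with $\lambda_n c_n(t)=c_n(\lambda_n^{-1}t)$, and observe that in the ultralimit every sublinear error term dies. First I would record that for each $n$ and all $s,t$,
\[
\tfrac{1}{k}|s-t|-\chi_2(\max\{d_X(c_n(0),c_n(s)),d_X(c_n(0),c_n(t))\})\le d_X(c_n(s),c_n(t))\le k|s-t|+\chi_2(\cdots).
\]
Rescaling distances by $\lambda_n$ and arguments by $\lambda_n^{-1}$, the quantity $\lambda_n\chi_2(\lambda_n^{-1}u)$ appears; since $\chi_2$ is sublinear (and WLOG nondecreasing and concave, by the remark in Section~\ref{sublin. ray}), for fixed $u$ we have $\lambda_n\chi_2(\lambda_n^{-1}u)=u\cdot\frac{\chi_2(\lambda_n^{-1}u)}{\lambda_n^{-1}u}\to 0$ as $\lambda_n\to 0$, so $\ulim_n\lambda_n\chi_2(\lambda_n^{-1}u)=0$. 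Hence passing to the ultralimit gives $\tfrac1k|s-t|\le d_\omega(c_\omega(s),c_\omega(t))\le k|s-t|$, i.e. $c_\omega$ is a $k$-bilipschitz ray. (One should also check $c_\omega$ is well-defined, i.e. each $\lambda_n c_n(t)$ stays in a bounded neighborhood of $\star_n$ in the metric $d_n$; this follows from the upper bound above with $s=0$.)

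Next I would treat the regularity. Fix $0\le s<r<\infty$. By hypothesis, for every $n$ and every $\alpha\in\theta$,
\[
\alpha\bigl(\kappa(c_n(s/\lambda_n)^{-1}c_n(r/\lambda_n))\bigr)\ge q\cdot d_X(c_n(s/\lambda_n),c_n(r/\lambda_n))-\chi_1(\max\{d_X(c_n(0),c_n(s/\lambda_n)),d_X(c_n(0),c_n(r/\lambda_n))\}).
\]
Multiply through by $\lambda_n$. The left side becomes $\alpha(\lambda_n\kappa(\cdots))$, whose ultralimit is by definition $\alpha(\kappa_\omega(c_\omega(s)^{-1}c_\omega(r)))$ (using linearity of $\alpha$ and the definition of $\kappa_\omega$). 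The first term on the right becomes $q\cdot\lambda_n d_X(\cdots)$, whose ultralimit is $q\cdot d_\omega(c_\omega(s),c_\omega(r))$. The error term becomes $\lambda_n\chi_1(\max\{\cdots\})$; since the argument $\max\{d_X(c_n(0),c_n(s/\lambda_n)),d_X(c_n(0),c_n(r/\lambda_n))\}\le k\max\{s,r\}/\lambda_n+\chi_2(\cdots)$ is $O(\lambda_n^{-1})$, the same sublinearity computation as above forces $\ulim_n\lambda_n\chi_1(\cdots)=0$. Taking $\ulim_n$ of the whole inequality therefore yields
\[
\alpha\bigl(\kappa_\omega(c_\omega(s)^{-1}c_\omega(r))\bigr)\ge q\cdot d_\omega(c_\omega(s),c_\omega(r))
\]
for all $\alpha\in\theta$, which is exactly $q$-regularity of the segment $[c_\omega(s),c_\omega(r)]$ (in particular $c_\omega(s)^{-1}c_\omega(r)\in U_\omega$ whenever $s\ne r$, since then the right side is positive by the bilipschitz lower bound).

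The step I expect to be the main obstacle is making rigorous the two ``interchange of $\ulim$ with the group-theoretic data'' moves: (i) that $\ulim_n\alpha(\lambda_n\kappa(x_n^{-1}y_n))$ computed for the sequences $x_n=c_n(s/\lambda_n)$, $y_n=c_n(r/\lambda_n)$ really equals $\alpha(\kappa_\omega(c_\omega(s)^{-1}c_\omega(r)))$, and (ii) that $\ulim_n\lambda_n d_X(x_n,y_n)=d_\omega(c_\omega(s),c_\omega(r))$. Both are standard facts about asymptotic cones of symmetric spaces — (ii) is the definition of the ultralimit metric, and (i) is the content of how $\kappa_\omega$ is defined on $X_\omega$ together with the fact that $\alpha$ is a fixed linear functional, so $\ulim$ commutes with it — but one must be careful that the particular sequences $(x_n)$, $(y_n)$ are admissible representatives of $c_\omega(s)$, $c_\omega(r)$, i.e. that they lie at bounded $d_n$-distance from $\star_n$; this was already ensured in the first paragraph. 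Everything else is a routine application of the sublinearity bookkeeping (the elementary inequality $\eta(au)\le a\eta(u)$ for $a\ge1$ from the remark in Section~\ref{sublin. ray} handles the rescaled error terms uniformly). Once these interchanges are justified, the proposition follows by taking ultralimits term by term.
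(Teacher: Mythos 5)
Your proposal is correct and follows essentially the same approach as the paper: rescale, multiply through by $\lambda_n$, and observe that the sublinear error terms $\lambda_n\chi_i(\cdot)$ vanish in the ultralimit by exactly the computation you give. One small transcription slip: the paper's Definition~\ref{sublin ray} takes the \emph{parameter} $\max(s,t)$ as the argument of the sublinear error function in the sublinear-ray inequality, not the distance $\max\{d_X(c(0),c(s)),d_X(c(0),c(t))\}$ — but since these quantities are comparable up to the same sublinear error, this does not affect your argument.
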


\begin{proof}
    First we show $c_{\omega}$ is $k$-bilipschitz. Let $s_n<r_n \in [0,\infty)$ be such that $$s= \ulim \lambda_ns_n \text{ and } r= \ulim \lambda_nr_n,$$ then  $$d_{\omega}(c_{\omega}(s),c_{\omega}(r))= \ulim_n \text{ } d_n(c_n(s_n),c_n(r_n))$$ and because each $c_n$ is a $(k,\chi_2)$-sublinear ray, $$\lambda_n\cdot \left[\frac{1}{k}(r_n-s_n)-\chi_2(r_n)\right]\leq d_n(c_n(s_n),c_n(r_n))\leq \lambda_n \cdot\left[k(r_n-s_n)+\chi_2(r_n)\right]. $$

    Since $r>0$, $r_n\rightarrow \infty$. Then for arbitrary $\eps>0$, we can find $n$ large such that $$\lambda_n \cdot \left[\frac{1}{k}(r_n-s_n)-\eps r_n\right]\leq d_n(c_n(s_n),c_n(r_n))\leq \lambda_n \cdot\left[k(r_n-s_n)+\eps r_n\right].$$ Taking the ultralimit of the inequalities above gives $$\frac{1}{k}(r-s)-\eps r\leq d_{\omega}(c_{\omega}(s),c_{\omega}(r))\leq k(r-s)+\eps r.$$ Since this equation hold for all $\eps$, we get the desired bi-Lipschitz inequality.

    The second assertion is equivalent to say that $c_{\omega}$ is a uniformly regular ray in $X_{\omega} $ in the sense of Kapovich--Leeb--Porti \cite{2014arXiv1411.4176K}. We rewrite the singular values to be \begin{equation}\label{40}
        \alpha(\kappa_{\omega}(c_{\omega}(s)^{-1}c_{\omega}(r)))= \ulim_n \text{ }\alpha(\lambda_n\kappa(c_n(s_n)^{-1}c_n(r_n))).
    \end{equation} Since $c_n$ are all $(q,\chi_1)$ $P_{\theta}$-sublinearly Morse, we have for all $\alpha \in \theta$ $$\alpha(\kappa(c_n(s_n)^{-1}c_n(r_n)))\geq q\cdot d_X(c_n(s_n),c_n(r_n))-\chi_1(d_X(e,c_n(r_n))).$$ Plug this into \eqref{40} gives 
    \begin{equation*}
        \begin{aligned}
            \alpha(\kappa_{\omega}(c_{\omega}(s)^{-1}c_{\omega}(r)))&\geq \ulim_n \text{ } \lambda_n\left[q\cdot d_X(c_n(s_n),c_n(r_n))-\chi_1(d_X(e,c_n(r_n)))\right]\\
        & \geq \ulim_n \text{ }\lambda_n \left[q\cdot d_X(c_n(s_n),c_n(r_n))-\chi_1(qr_n+\chi_2(r_n))\right].\\       
     \end{aligned}     
    \end{equation*}    
Again, since $r_n\rightarrow \infty$, we have 
\begin{equation*}
    \begin{aligned}
    \alpha(\kappa_{\omega}(c_{\omega}(s)^{-1}c_{\omega}(r)))&\geq \ulim_n \text{ }\lambda_n (q\cdot d_X(c_n(s_n),c_n(r_n)) -\eps r_n)\\
    &= q\cdot d_{\omega}(c_{\omega}(s),c_{\omega}(r))-\eps r
\end{aligned}
\end{equation*}
for all $\eps>0$, hence the proposition follows.
\end{proof}

\subsection{sublinearly Morse Lemma} Before we can state our sublinearly Morse lemma for $P_{\theta}$-sublinearly Morse sublinear rays in X, we need to show any $P_{\theta}$-sublinearly Morse ray converges to a unique point in the flag manifold.

Let $c:[0, \infty)\rightarrow X$ be a $P_{\theta}$-sublinearly Morse $(k, \chi_2)$-sublinear ray. Adapting the proof of Theorem \ref{Cauchy in general} by picking a sequence of points $c(t_n)$ with $t_0=0$ on $c$ with $$d_X(c(t_n),c(t_{n+1}))\leq \chi_2(d_X(c(0),c(t_n)))\text{ and }|t_n-t_{n+1}|>C>0,$$ we can show that $U_{\theta}(c(0)^{-1}c(t_n))$ flag converges to a unique point $\zeta \in \mathcal{F}_{\theta}$. Similarly one can show that for all $s\in[t_n,t_{n+1}]$, $U_{\theta}(c(0)^{-1}c(s))$ is arbitrarily close to $U_{\theta}(c(0)^{-1}c(t_n))$ as $n $ goes to infinity. Hence, $U_{\theta}(c(t)) \rightarrow \zeta.$

\begin{thm}[sublinearly Morse Lemma]\label{sublin. Morse lemma}
     Suppose $c$ is a $(q,\chi_1)$-$P_{\theta}$-sublinearly Morse $(k,\chi_2)$-sublinear ray, let $\zeta \in \mathcal{F}_{\theta}$ be the unique limit point of $U_{\theta}(c(t)).$ Then there exists a sublinear function $\eta:=\eta(q,\chi_1,k,\chi_2)$ such that the image of $c$ is contained in the $\eta$-sublinear neighborhood of $V:=V(c(0), \zeta)$ defined as $$\mathcal{N}_{\eta}(V):=\{ x \in X\text{ }|\text{ } d_X(x,V)\leq \eta(d_X(c(0),x))\}$$
\end{thm}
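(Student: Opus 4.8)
The plan is to argue by contradiction inside an asymptotic cone of $X$: Proposition~\ref{unif reg of c} turns a rescaling of $c$ into a $q$-regular bilipschitz ray in a Euclidean building, and the Kapovich--Leeb--Porti structure theory for buildings forces such a ray into the closed Weyl cone over its ideal endpoint; comparing this with the assumed failure of sublinear closeness produces the contradiction.

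\textbf{Reduction.} Write $\zeta$ for the limit flag of $U_\theta(c(t))$, so that $V := V(c(0),\zeta)$ is nonempty. Since $c$ is a $(k,\chi_2)$-sublinear ray, $\rho(t):=d_X(c(0),c(t))$ satisfies $\tfrac1k t-\chi_2(t)\le\rho(t)\le kt+\chi_2(t)$; hence $\rho(t)\to\infty$ and $\{t:\rho(t)\le s\}$ is bounded for all large $s$. Thus $g(s):=\sup\{d_X(c(t),V): \rho(t)\le s\}$ is finite and non-decreasing, and if
\[
\lim_{t\to\infty}\frac{d_X(c(t),V)}{d_X(c(0),c(t))}=0
\]
then $g$ is sublinear; taking $\eta$ to be a non-decreasing concave majorant of $g$ (Remark~\ref{convexity of kappa}) gives $d_X(c(t),V)\le g(\rho(t))\le\eta(d_X(c(0),c(t)))$ for all $t$, i.e. the image of $c$ lies in $\mathcal N_\eta(V)$. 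So it suffices to prove the displayed limit. (Running the same argument with a sequence of $(q,\chi_1)$-$P_\theta$-sublinearly Morse $(k,\chi_2)$-sublinear rays $c_n$ in place of the single $c$ — which is exactly the generality of Proposition~\ref{unif reg of c} — together with the fact that the flag-convergence estimate in the proof of Theorem~\ref{Cauchy in general} depends only on the constants, upgrades $\eta$ to depend only on $q,\chi_1,k,\chi_2$, as asserted in the statement.)

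\textbf{Contradiction via the asymptotic cone.} Suppose the limit fails, so there are $\delta>0$ and $t_n\to\infty$ with $d_X(c(t_n),V)\ge\delta D_n$, where $D_n:=d_X(c(0),c(t_n))\to\infty$. Put $\lambda_n:=1/D_n\to 0$, $\star_n:=c(0)$, and let $X_\omega$ be the resulting asymptotic cone; by Kleiner--Leeb~\cite{PMIHES_1997__86__115_0} it is a Euclidean building of the same rank and type as $X$. By Proposition~\ref{unif reg of c} applied to the constant sequence $c_n\equiv c$, the rescaled ray $c_\omega:=\ulim_n\lambda_n c$ is a $k$-bilipschitz, $q$-regular ray in $X_\omega$ based at $e_\omega$. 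The sublinear-ray bounds give $\tau_n:=\lambda_n t_n=t_n/D_n$ with $\ulim_n\tau_n=:\tau_\infty\in[1/k,k]$, so $\bar c:=c_\omega(\tau_\infty)=\ulim_n c(t_n)$ satisfies $d_\omega(\bar c,e_\omega)=\ulim_n\lambda_n D_n=1$. Next, since $U_\theta(c(0)^{-1}c(t))\to\zeta$ and $sD_n\to\infty$ for each fixed $s>0$, the flags $U_\theta(c(0)^{-1}c(sD_n))$ converge to $\zeta$; passing to ultralimits and using the identification of $\mathcal F_\theta$ with the $\tau_{mod}$-simplices at infinity of $X_\omega$ (Remark~\ref{equiv. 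KLP}), the ideal endpoint of $c_\omega$ is $\zeta$ and the rescaled cones converge, $\ulim_n\lambda_n V(c(0),\zeta)=\overline{V(e_\omega,\zeta)}=:\overline{V_\omega}$. Therefore
\[
d_\omega(\bar c,\overline{V_\omega})=\ulim_n\lambda_n\,d_X\!\big(c(t_n),V\big)\ \ge\ \ulim_n\lambda_n\cdot\delta D_n\ =\ \delta\ >\ 0.
\]
On the other hand, by the Kapovich--Leeb--Porti structure theory for Euclidean buildings~\cite{2014arXiv1411.4176K}, a $q$-regular $k$-bilipschitz ray in $X_\omega$ with ideal endpoint $\zeta$ is contained in $\overline{V(e_\omega,\zeta)}$; hence $\bar c=c_\omega(\tau_\infty)\in c_\omega\subseteq\overline{V_\omega}$, so $d_\omega(\bar c,\overline{V_\omega})=0$. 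This contradiction proves the displayed limit, and with it the theorem.

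\textbf{Main obstacle.} The crucial input is the building-level statement used at the end: that a \emph{uniformly} regular bilipschitz ray in a Euclidean building lies exactly in the closed Weyl cone over its ideal endpoint, and that this endpoint coincides with the ultralimit of the flags $U_\theta(c(0)^{-1}c(t))$. Within a single apartment this is elementary — a $q$-regular path never meets a wall, and the walls through $e_\omega$ cut the apartment into the closed Weyl sectors, so the ray is trapped in one of them — but in a genuine building one must first pass to an apartment via retractions, and when $\theta\subsetneq\Delta$ one must track the $\tau_{mod}$-star of the ideal simplex rather than a single chamber. A secondary technical point is keeping the three simultaneous ultralimits (of $c$, of $V$, and of the flags $U_\theta(\cdot)$) compatible, and matching the flag identifications of Remark~\ref{equiv. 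KLP} between $X$ and $X_\omega$.
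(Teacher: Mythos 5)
The student takes a genuinely different route from the paper, and the route has a gap.

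Both arguments use an asymptotic cone and Proposition~\ref{unif reg of c}. But the paper deliberately proves first a \emph{finite} statement, Lemma~\ref{stronger condition}: for every $a$, the segment $c([0,a])$ lies within a sublinear (in $d_X(c(0),c(a))$) neighborhood of the diamond $\Diamond(c(0),c(a))$. In that contradiction argument, all the ultralimits are of \emph{bounded} objects --- the rescaled diamonds $D_n^{-1}\Diamond_n$ and the rescaled finite paths $D_n^{-1}c_n|_{[0,a_n]}$ --- and the conclusion $c_\omega\subset\Diamond_\omega$ is exactly \cite[Lemma 5.22]{2014arXiv1411.4176K}, which is a statement about regular bilipschitz paths and diamonds. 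Only afterward, in the proof of the theorem proper, does the paper pass from finite diamonds to the infinite cone $V(e,\zeta)$, and this transition happens \emph{entirely inside $X$}, via \cite[Lemma 3.65]{2014arXiv1411.4176K} (Hausdorff convergence of truncated Weyl cones under flag convergence); no ultralimit of any infinite cone, and no identification of flags across $X$ and $X_\omega$, is ever taken.

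Your argument collapses these two steps and goes directly to the infinite cone. The crux of your contradiction is the chain of identifications
\[
\ulim_n \lambda_n V(c(0),\zeta)\ =\ \overline{V(e_\omega,\zeta)}, \qquad
\text{ideal endpoint of }c_\omega\ =\ \zeta,
\]
followed by a building-level statement that a $q$-regular bilipschitz ray with ideal endpoint $\zeta$ sits inside $\overline{V(e_\omega,\zeta)}$. None of these three steps is established by the machinery the paper sets up, and the first two are genuinely delicate. The flag manifold $\mathcal F_\theta$ is attached to $X$; the simplices at infinity of the Euclidean building $X_\omega$ form a very different (non-locally-compact) object, and matching the two --- i.e., showing that the ultralimit of the fixed cone $V(c(0),\zeta)$ is a Weyl cone of $X_\omega$, and that its direction coincides with the (a priori independently defined) ideal endpoint of $c_\omega$ --- is precisely the kind of identification the paper's two-step structure is designed to avoid. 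You flag this as a ``secondary technical point,'' but it is in fact the main technical burden of your approach. In addition, the lemma you invoke from \cite{2014arXiv1411.4176K} is, as used by the paper, the segment-to-diamond Lemma 5.22; the ray-to-cone version you need requires an additional (routine but nontrivial) limiting argument that should be spelled out. The contradiction skeleton and the reduction to a uniform $\eta$ via sequences of rays are fine; what is missing is the justification of the three displayed identifications, and the cleanest fix is to insert the paper's diamond step, i.e., prove Lemma~\ref{stronger condition} first and then use truncated-cone Hausdorff convergence in $X$.
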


We will prove the theorem by the following stronger statement.

\begin{lemma}\label{stronger condition}
    There exists a sublinear function $\eta':=\eta'(q,k,\chi_1, \chi_2)$ such that for all $a \in [0, \infty)$ we have for all $t\in [0, a]$ $$d_X(c(t),\Diamond(c(0), c(a))) \leq \eta'(d_X(c(0),c(a))).$$ 
\end{lemma}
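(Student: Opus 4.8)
The plan is to run the asymptotic–cone argument of Kapovich--Leeb--Porti, in which the sublinear error terms of a $P_\theta$-sublinearly Morse ray collapse to zero, so that in the limiting Euclidean building one is left with a genuinely regular bilipschitz path, which must lie in a diamond. First I reduce the claim to an asymptotic statement. Write $D(a):=d_X(c(0),c(a))$ and, for $a$ large enough that $c(0)^{-1}c(a)\in U$ (which holds once $qD(a)>\chi_1(D(a))$, i.e. for $a\ge a_0$), set $f(a):=\sup_{t\in[0,a]}d_X(c(t),\Diamond(c(0),c(a)))$. Since $c$ is $(k,\chi_2)$-sublinear we have $D(a)\to\infty$ and $\{a:D(a)\le s\}$ bounded for every $s$, so $f$ is bounded on such sets; hence if $\lim_{a\to\infty}f(a)/D(a)=0$, then $\eta'(s):=\sup\{f(a):a_0\le a,\ D(a)\le s\}$ (extended to be nondecreasing, and made concave as in Remark~\ref{convexity of kappa}) is a well-defined sublinear function satisfying $d_X(c(t),\Diamond(c(0),c(a)))\le f(a)\le\eta'(D(a))$ for all $a\ge a_0$ and $t\in[0,a]$. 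So it suffices to prove $f(a)/D(a)\to0$.

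Suppose not. Then there are $\eps>0$ and $a_n\to\infty$, with $t_n\in[0,a_n]$, such that $d_X(c(t_n),\Diamond(c(0),c(a_n)))\ge\eps D_n$, where $D_n:=D(a_n)\to\infty$ (that $a_n\to\infty$ follows from $D_n\le ka_n+\chi_2(a_n)$ and boundedness of $f/D$ on compact ranges of $a$). Put $\lambda_n:=1/D_n\to0$ and form the asymptotic cone $X_\omega$ of $X$ with respect to a non-principal ultrafilter $\omega$, the scales $\lambda_n$, and base points $\star_n:=c(0)$. Since $c$ is a $(q,\chi_1)$-$P_\theta$-sublinearly Morse $(k,\chi_2)$-sublinear ray, Proposition~\ref{unif reg of c} (applied to the constant sequence $c_n=c$) gives that $c_\omega:=\ulim_n\lambda_nc$ is a $k$-bilipschitz ray all of whose segments are $q$-regular. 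From $\tfrac1k a_n-\chi_2(a_n)\le D_n\le ka_n+\chi_2(a_n)$ and $\chi_2(a_n)/a_n\to0$ one gets $\lambda_na_n$ bounded above and below, so $\ell:=\ulim_n\lambda_na_n\in(0,\infty)$, the ultralimit of the points $c(a_n)$ (in the rescaled metrics) is $c_\omega(\ell)$, and $d_{X_\omega}(c_\omega(0),c_\omega(\ell))=\ulim_n\lambda_nD_n=1$.

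Now I identify the limit diamond. Taking $s=0$, $t=a_n$ in the $P_\theta$-sublinearly Morse inequality gives $\alpha(\kappa(c(0)^{-1}c(a_n)))\ge qD_n-\chi_1(D_n)$ for all $\alpha\in\theta$; multiplying by $\lambda_n$ and passing to the ultralimit yields $\alpha(\kappa_\omega(c_\omega(0)^{-1}c_\omega(\ell)))\ge q>0$, so $c_\omega(0)^{-1}c_\omega(\ell)\in U_\omega$ and $\Diamond(c_\omega(0),c_\omega(\ell))$ is a nondegenerate region of the Euclidean building $X_\omega$. Because diamonds are determined by the (here, uniformly regular) flag data of their tips, the diamonds $\Diamond(c(0),c(a_n))$, viewed in the rescaled metrics, converge to $\Diamond(c_\omega(0),c_\omega(\ell))$ (see the treatment of asymptotic cones and diamonds in \cite[\S 2.7, \S 3.9, \S 5]{2014arXiv1411.4176K} together with the dictionary of Remark~\ref{equiv. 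KLP}). The restriction $c_\omega|_{[0,\ell]}$ is a $q$-regular $k$-bilipschitz path in $X_\omega$ joining the regular pair $c_\omega(0),c_\omega(\ell)$, so by the Morse lemma for Euclidean buildings of Kapovich--Leeb--Porti \cite[Theorem 5.16, \S 5]{2014arXiv1411.4176K}, whose error term vanishes in a building, its image lies in $\Diamond(c_\omega(0),c_\omega(\ell))$. On the other hand, after extraction $t_\omega:=\ulim_n\lambda_nt_n\in[0,\ell]$, the ultralimit of the points $c(t_n)$ is $c_\omega(t_\omega)$, and
\[
d_{X_\omega}\bigl(c_\omega(t_\omega),\Diamond(c_\omega(0),c_\omega(\ell))\bigr)=\ulim_n\lambda_n\,d_X\bigl(c(t_n),\Diamond(c(0),c(a_n))\bigr)\ge\ulim_n\lambda_n\eps D_n=\eps>0,
\]
contradicting $c_\omega(t_\omega)\in\Diamond(c_\omega(0),c_\omega(\ell))$. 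Hence $f(a)/D(a)\to0$, which proves the lemma. (Theorem~\ref{sublin. Morse lemma} then follows by comparing, for $a$ a fixed large multiple of $t$, the diamond $\Diamond(c(0),c(a))$ with the Weyl cone $V(c(0),\zeta)$, using that $U_\theta(c(0)^{-1}c(a))\to\zeta$ at a sublinearly controlled rate, as in the proof of Theorem~\ref{Cauchy in general}.)

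The main obstacle is the input from the geometry of Euclidean buildings: that a uniformly regular bilipschitz path between a regular pair of points in $X_\omega$ is \emph{exactly} contained in the diamond spanned by them, and the matching fact that the rescaled diamonds $\Diamond(c(0),c(a_n))$ converge to $\Diamond(c_\omega(0),c_\omega(\ell))$ in the cone. Both belong to the Kapovich--Leeb--Porti framework, but they must be transported carefully through the identification of Remark~\ref{equiv. KLP}; by comparison, the bookkeeping that a failure of the lemma yields sequences with a \emph{uniform} lower bound $\eps$ and $a_n\to\infty$, and the passage from $f(a)/D(a)\to 0$ to an honest sublinear $\eta'$, are routine.
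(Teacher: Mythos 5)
Your approach matches the paper's in its essentials: both argue by contradiction, pass to an asymptotic cone using Proposition~\ref{unif reg of c} to produce a genuine bilipschitz $q$-regular limit path, and invoke the Kapovich--Leeb--Porti framework for Euclidean buildings. The normalization differs --- you rescale by $D_n:=d_X(c(0),c(a_n))$, whereas the paper rescales by the maximal deviation $D_n:=\max_{t\in[0,a_n]}d_X(c_n(t),\Diamond_n)$ --- but these are comparable in the situation arising from the negation (the paper's extra hypothesis $\liminf D_n/d_X(c_n(t_n),c_n(0))>0$ is exactly what keeps $t_\omega$ at finite cone-time, just as your bound $\ell\in(0,\infty)$ does).

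Two points need more care. First, you assert without proof that the rescaled diamonds $\Diamond(c(0),c(a_n))$ ultraconverge to $\Diamond(c_\omega(0),c_\omega(\ell))$. The paper explicitly warns that ``$\Diamond_\omega$ is not always a diamond in $X_\omega$'' and, rather than proving such a convergence, cites \cite[Lemma 5.22]{2014arXiv1411.4176K}, which is stated directly for the ultralimit set $\Diamond_\omega=\ulim\lambda_n\Diamond_n$ and yields $c_\omega\subset\Diamond_\omega$. Your normalization (tips at cone-distance $1$ and uniformly regular) makes the identification more plausible than in the paper's more general scaling, but it is not automatic: the KLP cone technology tracks ultralimit sets, and identifying $\ulim\lambda_n\Diamond_n$ with $\Diamond(c_\omega(0),c_\omega(\ell))$ requires its own argument or a precise citation. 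Your appeal to ``Theorem 5.16 \ldots whose error term vanishes in a building'' is the same gap in different clothes; Lemma~5.22 of \cite{2014arXiv1411.4176K} is the precise building statement you need, and you should use it on $\ulim\lambda_n\Diamond_n$ rather than on the diamond of the limit tips.

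Second, the lemma asserts that $\eta'$ depends only on $(q,k,\chi_1,\chi_2)$ and not on $c$. The paper secures this uniformity by allowing the rays $c_n$ to vary in the contradiction sequence, which is exactly why Proposition~\ref{unif reg of c} is formulated for a \emph{sequence} of rays. Your reduction to $f(a)/D(a)\to 0$ fixes a single $c$ from the start, and therefore only produces a $c$-dependent $\eta'$. This is repairable (run the compactness argument with $f$ defined as a supremum over all $(q,\chi_1)$-$P_\theta$-sublinearly Morse $(k,\chi_2)$-sublinear rays and apply Proposition~\ref{unif reg of c} to the resulting nonconstant sequence), but as written your reduction does not prove the stated uniformity.
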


\begin{proof}

We prove this claim by contradiction. Assume that there exists a sequence of $(q,\chi_1)$-$P_{\theta}$-sublinearly Morse $(k, \chi_2)$-sublinear rays $c_n:[0,\infty)\rightarrow X$ (note that $c_n$ are not necessarily distinct) such that there exist $a_n\in [0,\infty)$ with $$\max_{t\in [0,a_n]} \{d_X(c_n(t), \Diamond(c_n(0),c_n(a_n)))\} = D_n  $$  and $D_n\rightarrow \infty$ as $n \rightarrow \infty$, and there exist $t_n \in (0, a_n)=:I_n$ such that $c_n(t_n)$ realizes the maximum above, and $$\liminf _{n\rightarrow\infty} \frac{D_n}{d_X(c_n(t_n),c_n(0))}=m>0.$$
Because $c_n$ are $(k,\chi_2)$ sublinear rays, we have $$\liminf_{n\rightarrow \infty}\frac{D_n}{\frac{1}{k}t_n-\chi_2(t_n)} \geq \liminf_{n\rightarrow\infty} \frac{D_n}{d_X(c_n(t_n),c_n(0))}=m>0 ,$$ and since $t_n \rightarrow \infty$, we can rewrite the left hand side and get \begin{equation}\label{bdd of t omega}
    \liminf_{n\rightarrow\infty}\frac{D_n}{t_n}\geq k\cdot m>0.
\end{equation}

Without loss of generality, we can assume that for all $n$ $$c_n(0)=e \text{ and } \lim_{t\rightarrow \infty} U_{\theta}(c_n(t))=\zeta \in \mathcal{F}_{\theta}.$$ This can be done because $G$ acts transitively on $X$ and $\mathcal{F}_{\theta}$, and $P_{\theta}-$sublinearly Morseness and sublinear rays are preserved under this action. Let $\Diamond_n:= \Diamond(e,c_n(a_n))$, then by construction $$c_n(I_n)\subset \mathcal{N}_{D_n}(\Diamond_n) \text{ but } c_n(I_n)\not\subset \mathcal{N}_{(1-\eps)D_n}(\Diamond_n)$$ for any $\eps >0.$

Also, for $n$ large enough, $d_X(e,c_n(a_n))\geq \frac{1}{k}a_n-\chi_2(a_n) \rightarrow \infty$. Then the geodesic segment $[e, c_n(a_n)]$ is in $U$ since \begin{equation}\label{uniform reg of c_n}
    \begin{split}
        \alpha(\kappa(c_n(a_n)))&\geq q\cdot d_X(e,c_n(a_n))-\chi_1(d_X(e,c_n(a_n)))\\
        &\geq (q-\eps)\cdot d_X(e,c_n(a_n))
    \end{split}
\end{equation}
for all $\alpha \in \theta$
Note that this shows that $[e,c_n(a_n))]$ are uniform regular in the sense of Kapovich--Leeb--Porti for $n $ large. Then the pairs of flags $$(U_{\theta}(c_n(a_n)), U_{\theta}(c_n(a_n)^{-1}))=:(\zeta_n^+,\zeta_n^-)$$ is well defined and opposite for $n$ large. Each $\Diamond_n$ is embedded in the parallel set $P(\zeta_n^+,\zeta_n^-)=:P_n$. 

Pick $\lambda_n=D_n^{-1}$, we obtain a asymptotic cone $X_{\omega}.$
Then the paths $c_n$ are rescaled to $D_n^{-1}c_n:D_n^{-1}I_n\rightarrow X_n$. Passing to the ultralimit we get a path $c_{\omega}:I_{\omega}\rightarrow X_{\omega}$ such that $$c_{\omega}(s):= (D_n^{-1}c_n(s_n))$$ for $s:= \ulim_n D_n^{-1}s_n$, and $I_{\omega}:=[0, a_{\omega}]\cap \R$ with $a_{\omega}:=\ulim_n D_n^{-1}a_n$. According to Equation \eqref{bdd of t omega}, there exist $t_{\omega}<\infty$ in $I_{\omega}$ such that $t_{\omega}:= \ulim_nD_n^{-1}t_n.$

The parallel sets $P_n$ ultraconverge to a parallel set in $X_{\omega}$, i.e. $$P_{\omega}:=\ulim_n D_n^{-1}P_n$$ exists \cite[Lemma 3.81] {2014arXiv1411.4176K}. Since $\zeta_n^{\pm}=U_{\theta}(c_n(a_n)^{\pm})$ are opposite, let $l_n$ be geodesic lines such that $l_n(\pm \infty)\in \zeta_n^{\pm}$. Note that $l_n$ ultraconverge to a geodesic line $l_{\omega}\subset P_{\omega}$, and $l_{\omega}(\pm\infty)=:\zeta_{\omega}^{\pm}$ is a pair of opposite flags of type $\tau_{mod}$ that defines $P_{\omega}.$ Moreover,
$$\Diamond_{\omega}:=\ulim_nD_n^{-1}\Diamond_n$$ is contained in $P_{\omega}$ as a convex closed subset. Note that $\Diamond_{\omega}$ is not always a diamond in $X_{\omega}$. 

By construction \begin{equation}\label{1 nbhd}
    c_{\omega}\subset \mathcal{N}_1(\Diamond_{\omega}) \text{ but } c_{\omega} \not\subset\Diamond_{\omega}.
\end{equation}

By Proposition \ref{unif reg of c}, $c_{\omega}$ is a bilipschitz regular path in the definition of Kapovich--Leeb--Porti. Then Lemma 5.22 in \cite{2014arXiv1411.4176K} implies    $$c_{\omega}\subset \Diamond_{\omega}. $$ We have a contradiction. 
\end{proof}

 \begin{proof}[Proof of Theorem \ref{sublin. Morse lemma}]

Since $U_{\theta}(c(t))$ flag converges to $\zeta$ as $t\rightarrow \infty$, the truncated Weyl cones Hausdorff converge \cite[Lemma 3.65]{2014arXiv1411.4176K}, i.e. for all $R>0$ $$d_{Haus}\left(V(e, U_{\theta}(c(t)))\cap \bar{B}_R(e), V(e, \zeta)\cap \bar{B}_R(e)\right)\rightarrow 0$$ as $t\rightarrow \infty$. 

Let $\bar{c}$ be the projection of $c$ to the Weyl cone $V(e, \zeta)$, and for all $t\in [0,\infty)$ let $c_t$ be the projection of $c([0,t])$ to $V(e, U_{\theta}(c(t)))$. Fix an arbitrary $s \in [0,\infty)$, let $R_s:= \max\{ d_X(c_t(s),e),d_X(\bar{c}(s),e)\}$. Then for $\eps >0$ we can find $t_s$ large enough such that $$d_{Haus}\left(V(e, U_{\theta}(c(t_s)))\cap \bar{B}_{R_s}(e), V(e, \zeta)\cap \bar{B}_{R_s}(e)\right)<\eps.$$ 
Then $$d_X(c(s), V(e,\zeta))\leq d_X(c(s), V(e, U_{\theta}(c(t_s))))+\eps.$$
Since $\Diamond(e,c(t_s)) \subset V(e, U_{\theta}(c(t_s)))$, Lemma \ref{stronger condition} implies $$d_X(c(s), V(e,\zeta))\leq \eta'(d_X(e,c(s)))+\eps.$$ 

Because we chose $s$ arbitrarily, and $\eta(\cdot):=\eta'(\cdot)+\eps$ is again a sublinear function, we then have for all $s\in [0,\infty)$ $$c(s)\in \mathcal{N}_{\eta}(V(e,\zeta)),$$ and this concludes the proof of Theorem \ref{sublin. Morse lemma}. 
\end{proof}

\begin{cor}\label{sublin. Morse lemma for seq}
    Let $\{g_n\}\subset G$ be a $P_{\theta}$-sublinearly Morse sequence, and let $\zeta = \lim_{n\rightarrow\infty}U_{\theta}(g_n) \in \mathcal{F}_{\theta}$ be its limit point. Then there exist a sublinear function $\tau$ such that for all $n$, $$d_X(g_n, V(g_0, \zeta)) \leq \tau(d_X(g_0, g_n)).$$
\end{cor}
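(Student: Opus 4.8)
The plan is to deduce Corollary~\ref{sublin. Morse lemma for seq} directly from Theorem~\ref{sublin. Morse lemma} (the sublinearly Morse Lemma) together with Lemma~\ref{sublin Morse path}. First I would recall that by Lemma~\ref{sublin Morse path}, the concatenated path $c:[0,\infty)\to X$ built from the geodesic segments $\bigcup_i[g_i,g_{i+1}]$ (as in condition~(2) of Definition~\ref{sublin. Morse}) is a $(C,\bar\eta)$-sublinear ray that is also $(a,\bar\eta')$-$P_\theta$-sublinearly Morse, where $a$ is the constant from condition~(3) and $\bar\eta'=O(\bar\eta)$. Thus $c$ satisfies the hypotheses of Theorem~\ref{sublin. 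Morse lemma}.

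Next I would observe that the limit flag of $U_\theta(c(t))$ as $t\to\infty$ is exactly $\zeta=\lim_{n\to\infty}U_\theta(g_n)$. This follows because each $g_n$ lies on $c$ (at some parameter $t_n$ with $c(t_n)=g_nK$), and the discussion immediately preceding Theorem~\ref{sublin. Morse lemma} shows that $U_\theta(c(0)^{-1}c(s))$ flag-converges to a single point as $s\to\infty$, and this limit agrees with the limit along the subsequence $\{t_n\}$, which by construction is $\zeta$. (Here one uses that $c(0)=g_0K$, so $V(c(0),\zeta)=V(g_0,\zeta)$ after translating by $g_0$.) Applying Theorem~\ref{sublin. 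Morse lemma} to $c$ then yields a sublinear function $\eta$, depending only on $a$, $\bar\eta'$, $C$, $\bar\eta$ — hence ultimately only on the data of the sublinearly Morse sequence — such that
\[
d_X(c(s),V(g_0,\zeta))\leq \eta\bigl(d_X(g_0,c(s))\bigr)\quad\text{for all }s\in[0,\infty).
\]

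Finally I would specialize to $s=t_n$, the parameters with $c(t_n)=g_nK$, obtaining $d_X(g_n,V(g_0,\zeta))\leq \eta(d_X(g_0,g_n))$ for all $n$. Setting $\tau:=\eta$ completes the proof. The only point requiring a little care — and the one I expect to be the mild obstacle — is verifying cleanly that the flag limit along the continuous path $c$ coincides with the flag limit $\zeta$ of the discrete sequence $\{U_\theta(g_n)\}$; this is handled by the remark preceding Theorem~\ref{sublin. Morse lemma} (the same argument used in the proof of Theorem~\ref{Cauchy in general}), which shows $U_\theta(c(0)^{-1}c(s))$ is within arbitrarily small distance of $U_\theta(c(0)^{-1}c(t_n))$ for $s\in[t_n,t_{n+1}]$ as $n\to\infty$, so both limits exist and must agree. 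Everything else is a direct substitution into the already-established sublinearly Morse Lemma.
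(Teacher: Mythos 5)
Your proposal is correct and follows exactly the paper's route: apply Lemma~\ref{sublin Morse path} to see that the concatenated path $c$ is a $P_\theta$-sublinearly Morse sublinear ray, then invoke Theorem~\ref{sublin. Morse lemma} and read off the conclusion at the parameters $t_n$ with $c(t_n)=g_nK$. The extra care you take in checking that the flag limit of $U_\theta(c(\cdot))$ coincides with $\zeta=\lim_n U_\theta(g_n)$ is the right point to flag, and is indeed what the remark before Theorem~\ref{sublin. Morse lemma} is implicitly supplying; the paper's proof compresses all of this into a one-line citation.
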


    \begin{proof}
        The corollary follows directly from Lemma \ref{sublin Morse path} and Theorem \ref{sublin. Morse lemma}.
    \end{proof}

\bibliographystyle{plain}
\bibliography{sublinear}

\end{document}